\title{Calibration for multivariate L\'evy-driven Ornstein-Uhlenbeck processes with applications to weak subordination}
\author{
    Kevin W.~Lu\thanks{Department of Applied Mathematics,
        University of Washington,
        Seattle, WA 98195,
        United States.
        Email: \href{mailto:kwlu@uw.edu}{kwlu@uw.edu}}
}
\numberwithin{equation}{section} 
\let\originalleft\left
\let\originalright\right
\def\left#1{\mathopen{}\originalleft#1}
\def\right#1{\originalright#1\mathclose{}}
\newtheorem{theorem}{Theorem}[section]
\newtheorem{corollary}[theorem]{Corollary}
\newtheorem{lemma}[theorem]{Lemma}
\newtheorem{proposition}[theorem]{Proposition}
\theoremstyle{definition}
\newtheorem{definition}[theorem]{Definition}
\theoremstyle{remark}
\newtheorem{remark}[theorem]{Remark}
\renewcommand{\today}
{\number\day \space \ifcase\month\or
    January\or February\or March\or April\or May\or June\or
    July\or August\or September\or October\or November\or December
    \fi \space \number\year
}
\newcommand{\rmd}{{\rm d}}
\newcommand{\rmi}{{\rm i}}
\newcommand{\stp}{\stackrel{P}{\rightarrow}}
\newcommand{\std}{\stackrel{D}{\rightarrow}}
\newcommand{\stas}{\stackrel{\text{a.s.}}{\longrightarrow}}
\newcommand{\eqd}{\stackrel{D}{=}}
\newcommand{\DD}{\mathbb{D}}
\newcommand{\EE}{\mathbb{E}}
\newcommand{\RR}{\mathbb{R}}
\newcommand{\NN}{\mathbb{N}}
\newcommand{\PP}{\mathbb{P}}
\newcommand{\PPP}{{\cal P}}
\newcommand{\GGG}{{\cal G}}
\newcommand{\LLL}{{\cal L}}
\newcommand{\TTT}{{\cal T}}
\newcommand{\XXX}{{\cal X}}
\newcommand{\ZZZ}{{\cal Z}}
\newcommand{\skal}[2]{\langle #1,#2\rangle}
\newcommand{\eins}{{\bf 1}}
\newcommand{\bfnull}{{\bf 0}}
\newcommand{\bfe}{{\bf e}}
\newcommand{\bft}{{\bf t}}
\newcommand{\bfx}{{\bf x}}
\newcommand{\bfy}{{\bf y}}
\newcommand{\bfz}{{\bf z}}
\newcommand{\bfd}{{\bf d}}
\newcommand{\bfB}{{\bf B}}
\newcommand{\bfC}{{\bf C}}
\newcommand{\bfJ}{{\bf J}}
\newcommand{\bfT}{{\bf T}}
\newcommand{\bfU}{{\bf U}}
\newcommand{\bfV}{{\bf V}}
\newcommand{\bfW}{{\bf W}}
\newcommand{\bfX}{{\bf X}}
\newcommand{\bfY}{{\bf Y}}
\newcommand{\bfZ}{{\bf Z}}
\newcommand{\bfalpha}{\boldsymbol{\alpha}}
\newcommand{\bfdelta}{\boldsymbol{\delta}}
\newcommand{\bfmu}{\boldsymbol{\mu}}
\newcommand{\bftheta}{\boldsymbol{\theta}}
\newcommand{\bfvtheta}{\boldsymbol{\vartheta}}
\newcommand{\bfeta}{\boldsymbol{\eta}}
\newcommand{\bfzeta}{\boldsymbol{\zeta}}
\newcommand{\myCov}{\operatorname{Cov}}
\newcommand{\myCorr}{\operatorname{Corr}}
\newcommand{\spur}{\operatorname{trace}}
\newcommand{\wt}{\widetilde}
\newcommand{\tr}{\diamond}
\newcommand{\given}{{\,\vert\,}}
\newcommand{\givenm}{\,\middle\vert\,}
\begin{document}
\date{\today}
\maketitle

\begin{abstract}
    Consider a multivariate L\'evy-driven Ornstein-Uhlenbeck process where the stationary distribution or background driving L\'evy process is from a parametric family. We derive the likelihood function assuming that the innovation term is absolutely continuous. Two examples are studied in detail: the process where the stationary distribution or background driving L\'evy process is given by a weak variance alpha-gamma process, which is a multivariate generalisation of the variance gamma process created using weak subordination. In the former case, we give an explicit representation of the background driving L\'evy process, leading to an innovation term which is discrete and continuous mixture, allowing for the exact simulation of the process, and a separate likelihood function. In the latter case, we show the innovation term is absolutely continuous. The results of a simulation study demonstrate that maximum likelihood numerically computed using Fourier inversion can be applied to accurately estimate the parameters in both cases.

    \vspace{0.5em} \noindent {\em Keywords:} L\'evy process, Ornstein-Uhlenbeck process, self-decompos\-ability, likelihood inference, multivariate subordination, weak subordination, variance gamma process.
       
    \vspace{0.5em} \noindent {\em 2010 MSC Subject Classification:} Primary: 62M05, 60G51, 60G10; Secondary: 62F10, 60E10, 60H05.  
\end{abstract}

\section{Introduction}\label{intro}

Let $\bfX=(\bfX(t))_{t\geq 0}$ be the $n$-dimensional process given by the stochastic differential equation
\begin{align}\label{ldoup}
    \rmd\bfX(t) =-\lambda \bfX(t)\rmd t + \rmd \bfZ(\lambda t),\quad \bfX(0)=\bfX_0,\quad t\geq 0,
\end{align}
where $\lambda> 0$, $\bfZ$ is an $n$-dimensional L\'evy process, and $\bfX_0$ is a random vector independent of $\bfZ$. Here, $\bfX$ is known as a \emph{L\'evy-driven Ornstein-Uhlenbeck process (LDOUP)} or a Ornstein-Uhlenbeck-type process, $\lambda$ is the autocorrelation parameter, and $\bfZ$ is the \emph{background driving L\'evy process (BDLP)}.

Suppose that the stationary distribution or the BDLP of the LDOUP is from a parametric family with parameter vector $\wt{\bfvtheta}$, then the law of $\bfX$ is determined by the parameter vector ${\bfvtheta}:=(\lambda,\wt\bfvtheta)$. Let $t_0=0,t_1=\Delta , \dots,t_m= m\Delta$ be $m+1$ equally spaced observation times with sampling interval $\Delta>0$. In this paper, we consider the estimation of the parameter vector $\bfvtheta$ based on the observations $\bfX(0),\bfX(t_1),\dots, \bfX(t_m)$ using maximum likelihood (ML) with Fourier inversion of a characteristic function, as well as the simulation of the observations. In many respects, this work can be seen as an multivariate generalisation of Valdivieso, Schoutens and Tuerlinckx \cite{vstt}.

The classical Ornstein-Uhlenbeck process, that is \eqref{ldoup} with the BDLP being a univariate Brownian motion, is well-known and widely used. In the Vasicek model, short-term interest rates are modelled by a classical Ornstein-Uhlenbeck process plus a long-run mean. The generalisation to L\'evy-driven Ornstein-Uhlenbeck processes was introduced by Sato and Yamazato \cite{saya,SaYa85}, and there have been various applications to mathematical finance. Specifically, in the {Barndorff-Nielsen and Shephard model} \cite{BNSh01} (also see \cite{BN97,sco}), stochastic volatility for stock prices and exchange rates are modelled by a LDOUP where the BDLP is a subordinator, that is a nondecreasing L\'evy process.
Typically, the BDLP is chosen such that the stationary distribution is gamma or inverse Gaussian. Cariboni and Schoutens \cite{CS092} have used the same processes to model {default intensities} in credit risk.

{Mean-reverting price processes} with jumps are often modelled using LDOUPs. Accordingly, these processes have been applied to the modelling of energy prices and the pricing of energy derivatives \cite{BKMB,CKM18,Sab20}. As an example, in \cite{BKMB}, electricity spot prices follow a weighted sum of univariate LDOUPs representing price factors, plus a deterministic function representing trend and seasonality. In Endres and St\"{u}binger \cite{ES19} and Wu, Zang and Zhao \cite{WZZ20}, optimal pair trading strategies are studied under a model where price spreads follow a LDOUP plus a deterministic function, and in a similar price spread model, Benth and Saltyte-Benth \cite{BSB06} consider the pricing of spark spread options.


Parameter estimation for the classical Ornstein-Uhlenbeck process is well-established, see for instance \cite{AS02,LiSh}. Parameter estimation for univariate LDOUPs in our setting has been studied in Valdivieso, Schoutens and Tuerlinckx \cite{vstt} using maximum likelihood, and in Taufer and Leonenko \cite{TL092} using empirical characteristic functions, while in the context of stochastic volatility models, estimation has been studied in \cite{BNSh01,GS06,PFH14}. Asymptotic results using maximum likelihood are considered in Zhang, Zhang and Shuguang \cite{ZhZhSu06} for the case where the stationary distribution is gamma, and using empirical characteristic functions in Jongbloed and van der Meulen \cite{JvdM06} for the case where the BLDP is a driftless subordinator. Numerous authors \cite{BDY07,GPR20,HL09,Mai14,Tra17,ZhZh13} have also worked on asymptotic results for estimators of the autocorrelation parameter $\lambda$ of a univariate LDOUP in a range of settings, sometimes along with other parameters, however they do not deal with the situation of the BDLP coming from a more general parametric family. In Jongbloed, van der Meulen and van der Vaart \cite{JMV05}, the authors consider nonparametric estimation of the stationary distribution for univariate LDOUPs via its L\'evy density. Fasen \cite{Fas13} studies the estimation of the autocorrelation matrix for multivariate LDOUPs. Overall, there has been relatively fewer research on LDOUPs in the multivariate setting.



The calibration method in this paper is based on maximum likelihood estimation. The discrete observations of a LDOUP follow an AR(1) process, $\bfX(t_k) = b\bfX(t_{k-1}) + \bfZ_{b}^{(k)}$, $k=1,\dots,m$, where $b=e^{-\lambda\Delta}$ and $\bfZ_{b}^{(k)}\eqd e^{-\lambda \Delta} \bfZ^*(\Delta)$ is iid. The innovation term $\bfZ^*(\Delta):=\int_0^{\lambda\Delta}e^{s}\rmd \bfZ(s)$ plays a critical role, occurring ubiquitously in our analysis. This AR(1) structure and the properties of $\bfZ^*(\Delta)$ are used to derive likelihood functions and simulation methods.


If we now let $\bfX=(X_1,\dots,X_n)$ be a L\'evy process and $\bfT=(T_1,\dots,T_n)$ be a subordinator, representing time change, and assume $\bfX$ and $\bfT$ are independent, then the process $\bfX\circ\bfT = (X_1(T_1(t)),\dots,X_n(T_n(t)))_{t\geq 0}$ is known as the {strong subordination} of $\bfX$ and $\bfT$. If $\bfT$ has indistinguishable components or $\bfX$ has independent components, then $\bfX \circ \bfT$ is a L\'evy process (see \cite[Theorem 30.1]{sato99} for the first case and \cite[Theorem 3.3]{BPS01} for the second case), otherwise it may not be (see \cite[Proposition 3.9]{BLM17a}). As a result, L\'evy processes created by strong subordination have a restrictive dependence structure. This motivated the introduction of weak subordination in Buchmann, Lu and Madan \cite{BLM17a}, a method for constructing time-changed L\'evy processes which extends and reproduces properties similar to strong subordination. In the variance alpha-gamma process introduced by Semeraro \cite{LS10,Se08}, a multivariate Brownian motion is subordinated by the sum of a univariate gamma subordinator affecting all components of the process to represent a common time change, and univariate gamma subordinators independently affecting each component of the process to represent idiosyncratic time changes. The result is a L\'evy process provided the Brownian motion has independent components. In contrast, the weak variance alpha-gamma (WVAG) process introduced in \cite{BLM17a} is constructed similarly, but using weak subordination instead, and now allowing the Brownian motion to have possibly dependent components while the result remains a L\'evy process. These WVAG processes have been applied to model multivariate stock returns and in pricing of best-of options \cite{BLM17c,MiSz17}.

Here, we focus specifically on two examples using subordination, the WVAG-OU and OU-WVAG processes. The former is a LDOUP where the stationary distribution is a WVAG distribution and the latter is a LDOUP where the BDLP is a WVAG process. In all, the WVAG-OU and OU-WVAG processes are models where the stationary distribution or BDLP, respectively, can be interpreted as a multivariate time-changed process with jumps, heavier tails and a flexible dependence structure, while also exhibiting mean reversion and exponentially decaying autocorrelation from the Ornstein-Uhlenbeck structure. For the WVAG-OU process, we give an explicit representation of the BDLP in terms of a compound Poisson distribution with the base distribution being a mixture of variance gamma distributions, so the LDOUP has finite activity. The corresponding innovation term $\bfZ^*(\Delta)$ is then a discrete and continuous mixture, its characteristic function is known in closed-form, and we give an explicit representation that is used to exactly simulate the LDOUP and derive the likelihood function for the process. There is similar work on the exact simulation of LDOUPs with various other stationary distributions (for example, gamma \cite{QDZ19}, inverse Gaussian \cite{ZhZh08}, generalised inverse Gaussian \cite{Zh11}, tempered stable \cite{ZhZh09}, bilateral gamma \cite{SCP21}, and multivariate tempered stable \cite{Grab20}). For the OU-WVAG process, we show that $\bfZ^*(\Delta)$ is absolutely continuous. This process has infinite activity. However, there is no known closed-form formula for the characteristic exponent of $\bfZ^*(\Delta)$, and the methods for the WVAG-OU process are not applicable in this case.

Next, we extend the likelihood and simulation methods of Valdivieso, Schoutens and Tuerlinckx \cite{vstt} to the multivariate setting. While these methods can be used for the OU-WVAG process, they also apply to general LDOUPs. Specifically, assuming $\bfZ^*(\Delta)$ is absolutely continuous, we give a likelihood function in terms of the Lebesgue density of $\bfZ^*(\Delta)$, which in turn can be computed using Fourier inversion on the characteristic exponent of $\bfZ^*(\Delta)$. The stochastic integral $\bfZ^*(\Delta)$ can be approximated with an Euler scheme, and provided that it is possible to simulate from the BDLP $\bfZ$, we can simulate the observations of $\bfX$. The convergence of such approximations is studied in more generality in \cite{JP98}, though we also provide a simple and direct proof specialised to our situation. We also provide moment formulas for $\bfZ^*(\Delta)$.

Then we perform a simulation study to show that the parameters for both the WVAG-OU and OU-WVAG processes can be accurately estimated, though we use a sequential method to approximate maximum likelihood in the latter case given the high computational burden from not having a closed-form formula. This leads to less accurate results compared to the WVAG-OU process.


To summarise, the main contributions of this paper are, firstly, we find an explicit representation of $\bfZ$ and $\bfZ^*(\Delta)$ in Theorem \ref{wvagouchar} for the WVAG-OU process, and use it to derive a likelihood function despite the absolute continuity assumption failing and give an exact simulation method. Secondly, we give multivariate versions of the likelihood and simulation methods in Valdivieso, Schoutens and Tuerlinckx \cite{vstt} and show this is applicable to the OU-WVAG process. And lastly, we numerically implement these calibration and simulation methods for both processes.

The paper is structured in the following way. In Section \ref{sec2}, we give a brief outline of L\'evy processes, LDOUPs and weak subordination. In Section \ref{sec4}, we focus on the specific example of the WVAG-OU and OU-WVAG processes and obtain the above-mentioned results. In Section \ref{sec3}, we give the likelihood function for a LDOUP assuming $\bfZ^*(\Delta)$ is absolutely continuous, and discuss the approximate simulation method. In Section \ref{sec5}, we simulate WVAG-OU and OU-WVAG processes, provide the calibration results from a Monte Carlo simulation study, and conclude with a discussion. All proofs are contained in Section \ref{sec6}. Appendix \ref{append} reviews the connection between LDOUPs and self-decomposability.

\section{Preliminaries}\label{sec2}

We write $\bfx=(x_1,\dots,x_n)\in\RR^n$ as a row vector. For $A\subseteq\RR^n$, let $A_*:=A\backslash\{{\bf 0}\}$ and let $\eins_A$ denote the indicator function for $A$. Let $\DD:=\{\bfx\in\RR^n:\|\bfx\|\le 1\}$ be the Euclidean unit ball centred at the origin. For $\bfx\in\RR^n$ and $\Sigma\in\RR^{n\times n}$, let $\|\bfx\|^2_\Sigma:=\bfx\Sigma\bfx'$. Let $\bfe_k$, $k=1,\dots,n$, be the $k$th canonical basis vector of $\RR^n$. Let $I:[0,\infty)\to[0,\infty)$ be the identity function. Let $\Phi_{\bfX}$, $\Psi_{\bfX}$, $f_{\bfX}$ and $\PPP_{\bfX}$ respectively denote the characteristic function, characteristic exponent, density, and probability law of the random vector $\bfX$, or of $\bfX(1)$ if $\bfX$ is a process. Let $\LLL^n$ be the $n$-dimensional Lebesgue measure with $\LLL := \LLL^1$, $\bfdelta_{\bfx}$ be the Dirac measure at $\bfx\in\RR^n$, and $\otimes$ denote the product measure. We write a process $\bfX$ in terms of its marginal components and time marginals as $\bfX=(X_1,\dots,X_n)=(\bfX(t))_{t\ge 0}$.

\subsection{L\'evy Processes}

For references on L\'evy processes, see \cite{bert96,sato99}. The law of an $n$-dimensional L\'evy process $\bfX$ is determined by its characteristic function $\Phi_\bfX:=\Phi_{\bfX(1)}$, where
\begin{align*}
    \Phi_{\bfX(t)}(\bftheta)\,:=\,\EE[e^{\rmi\skal\bftheta{\bfX(t)}}] = e^{t\Psi_\bfX(\bftheta)}, \quad \text{$t\ge0$, $\bftheta\in\RR^n$},
\end{align*}
or its characteristic exponent
\begin{align}\label{lkform}
    \Psi_\bfX(\bftheta)=
    \rmi \skal {\bfmu}\bftheta - \frac 12\|\bftheta\|^2_{\Sigma} +\int_{\RR^n_*}\left(e^{\rmi\skal\bftheta \bfx} - 1 - \rmi\skal\bftheta \bfx \eins_\DD(\bfx)\right)\,\XXX(\rmd \bfx),
\end{align}
where $\bfmu\in\RR^n$, $\Sigma\in\RR^{n\times n}$ is a covariance matrix  and $\XXX$ is a L\'evy measure, that is a nonnegative Borel measure on $\RR^n_*$ such that
$\int_{\RR^n_*}(1\wedge \|\bfx\|^2)\,\XXX(\rmd \bfx)<\infty$. We write $\bfX\sim L^n(\bfmu,\Sigma,\XXX)$ to mean $\bfX$ is an $n$-dimensional L\'evy process with characteristic triplet $(\bfmu,\Sigma,\XXX)$.


An $n$-dimensional L\'evy process $\bfT$ with almost surely nondecreasing sample paths is called a \emph{subordinator}, and denoted $\bfT \sim S^n(\bfd,\TTT)$, where $\bfd := \bfmu - \int_{\DD_*} \bft \,\TTT(\rmd \bft)$ is its drift, and $\TTT$ is its L\'evy measure.

Let $P_S(b)$ denote a Poisson process with rate $b>0$. Let $CP^n(b,\PPP)$ denote an $n$-dimen\-sional compound Poisson process with rate $b>0$ and base distribution $\PPP$ on $\RR^n_*$.
Whenever we append $\bfeta\in\RR^n$ to the end of the list of parameters of a L\'evy process, that means we add the drift $\bfeta I$ to that process, so for example, $\bfC\sim  CP^n(b,\PPP,\bfeta)$ means $\bfC= \bfeta I + \wt\bfC$, where $\wt\bfC\sim CP^n(b,\PPP)$.

%
%

\subsection{Weak Subordination}

We give a brief outline of weak subordination. These weakly subordinated processes will be used as the stationary distribution or BDLP. The results in this subsection can be found in \cite{BLM17a}.



Let $\bfX=(X_1,\dots,X_n)\sim L^n$ and $\bfT =(T_1,\dots,T_n)\sim S^n(\bfd,\TTT)$. For all $\bft=(t_1,\dots,t_n)\in[0,\infty)^n$, the random vector $\bfX(\bft):= (X_1(t_1),\dots X_n(t_n))$ is infinitely divisible and its characteristic exponent is denoted $\bft \tr \Psi_{\bfX}$. The \emph{weak subordination} of $\bfX$ and $\bfT$ is the L\'evy process $\bfZ\eqd\bfX\odot\bfT$ defined by the characteristic exponent
\begin{align*}
    \Psi_\bfZ(\bftheta) =(\bfd\tr\Psi_{\bfX})(\bftheta)+\int_{[0,\infty)^n_*} (\Phi_{\bfX(\bft)}(\bftheta)-1)\,\TTT(\rmd \bft), \quad \bftheta\in\RR^n.
\end{align*}
The strong subordination $\bfX\circ\bfT$ is not always a L\'evy process, but the weak subordination $\bfX\odot\bfT$ is, and in the cases mentioned in Section~\ref{intro} where it is known that $\bfX\circ\bfT$ is a L\'evy process,  $\bfX\circ\bfT\eqd\bfX\odot\bfT$. Weak subordination has other properties and jump behavior that are similar to strong subordination.

Let $\Gamma_S(a,b)$ denote a gamma subordinator with shape $a>0$ and rate $b>0$, and $BM^n(\bfmu,\Sigma)$ denote a Brownian motion with drift $\bfmu=(\mu_1,\dots,\mu_n)\in\RR^n$ and covariance matrix $\Sigma=(\Sigma_{kl})\in\RR^{n\times n}$.

Let $b>0$. A process $\bfV\sim VG^n(b,\bfmu,\Sigma)$ is a \emph{variance gamma (VG) process} if $\bfV\eqd \bfB\circ (G,\dots,G)$, where $\bfB\sim BM^n(\bfmu,\Sigma)$ and $G\sim\Gamma_S(b,b)$ are independent.

Let $n\geq 2$, $a> 0$, $\bfalpha=(\alpha_1,\dots,\alpha_n) \in(0,1/a)^n$ and $\beta_k:= (1-a\alpha_k)/{\alpha_k}$, $k=1,\dots,n$. Let $G_{0}\sim\Gamma_S(a,1)$, $G_k\sim\Gamma_S(\beta_k,1/\alpha_k)$, $k=1,\dots,n$, be independent. A process $\wt\bfW \sim WVAG^n(a,\bfalpha,\allowbreak\bfmu,\Sigma)$ is a \emph{weak variance alpha-gamma (WVAG) process} if $\wt\bfW\eqd \bfB\odot \bfT$, where $\bfB\sim BM^n(\bfmu,\Sigma)$ and $\bfT= G_0\bfalpha+(G_1,\dots,G_n)$. This is a multivariate generalisation of the VG process with a flexible dependence structure without requiring that the Brownian motion have independent components.
Define $\bfalpha\tr\bfmu:= (\alpha_1\mu_1,\allowbreak\dots,\alpha_n\mu_n)\in\RR^n$ and $\bfalpha\tr\Sigma:=(\Sigma_{kl} (\alpha_k\allowbreak\wedge \alpha_l))\in\RR^{n\times n}$,
then the characteristic exponent of $\bfW\sim WVAG^n(a,\bfalpha,\bfmu,\Sigma,\bfeta)$ is
\begin{align}
    \begin{split}
        \Psi_\bfW(\bftheta)={}&\rmi\skal{\bfeta}{\bftheta}-a\log\left( 1-\rmi \skal{\bfalpha\tr\bfmu}\bftheta+\frac{1}{2}\|\bftheta\|^2_{\bfalpha\tr\Sigma}\right) \\
        &-\sum_{k=1}^n\beta_k\log\left(1-\rmi \alpha_k\mu_k\theta_k+\frac{1}{2}\alpha_k\theta_k^2\Sigma_{kk}\right),\quad \bftheta\in\RR^n.\label{v-alpha-g-cf}
    \end{split}
\end{align}



\subsection{L\'evy-Driven Ornstein-Uhlenbeck Processes}

%

Recalling the definition of a LDOUP in \eqref{ldoup}, note that there is no loss in generality in using the L\'evy process $\bfZ\circ (\lambda I)$ instead of $\bfZ$ as the driving noise since any L\'evy process $\wt\bfZ\sim L^n(\wt\bfmu,\wt\Sigma,\wt\ZZZ)$ can be written in the form $\wt\bfZ\eqd\bfZ\circ (\lambda I)$, where $\bfZ\sim L^n(\wt\bfmu/\lambda,\wt\Sigma/\lambda,\wt\ZZZ/\lambda)$. 

A LDOUP is the solution to \eqref{ldoup}, which is
\begin{align}\label{ousoln}
    \bfX(t)=e^{-\lambda t} \bfX(0) +e^{-\lambda t}\int_0^t e^{\lambda s}\,\rmd\bfZ(\lambda s),\quad t\geq 0,
\end{align}
where the integral term is a stochastic integral with respect to a L\'evy process (see \cite{mas04,saya}).

We are interested in the situation where the LDOUP $\bfX$ is stationary, that is there exists a random vector $\bfY$ such that $\bfX_0\eqd \bfY$ implies $\bfX(t)\eqd \bfY$ for all $t\geq 0$. Then the distribution of $\bfY$ (or $\bfY$, with a minor abuse of terminology) is known as the \emph{stationary distribution} of $\bfX$.
An $n$-dimensional random vector $\bfY\sim SD^n$ is called \emph{self-decomposable} if for any $0 < b < 1$, there exists a random vector $\bfZ_b$, independent of $\bfY$, such that
\begin{align}\label{sddefn}
    \bfY\eqd b\bfY+\bfZ_b.
\end{align}
A L\'evy process $\bfY$ is self-decomposable if $\bfY(1)$ is. See Appendix \ref{append} for a summary of the connection between LDOUPs and self-decomposability. Specifically, due to Lemma \ref{statsolnlem}, a stationary LDOUP can be defined in two equivalent ways: either by specifying its BDLP or its stationary distribution.
\begin{enumerate}[(M1)]
    \item  Let $\bfX\sim \operatorname{\mathit{OU-\bfZ}}(\lambda)$ be the stationary LDOUP with autocorrelation parameter $\lambda>0$ and BDLP $\bfZ\sim L^n(\bfmu,\Sigma,\ZZZ)$ satisfying
    \begin{align}\label{logcond}
        \int_{(2\DD)^C}\log \|\bfz\|\,\ZZZ(\rmd\bfz)<\infty.
    \end{align}
    There exists a corresponding stationary distribution $\bfY\sim SD^n$.\label{model1}
    \item  Let $\bfX\sim \operatorname{\mathit{\bfY-OU}}(\lambda)$ be the stationary LDOUP with autocorrelation parameter $\lambda>0$ and stationary distribution $\bfY\sim SD^n$. There exists a corresponding BDLP $\bfZ\sim L^n(\bfmu,\Sigma,\ZZZ)$ satisfying \eqref{logcond}. \label{model2}
\end{enumerate}

This naming convention follows \cite[Section 5.2.2]{sco}. Since the LDOUP $\bfX$ is assumed to have a stationary distribution, the initial distribution $\bfX_0$ is uniquely specified in the above two models.

For a stationary LDOUP $\bfX$, let $\wt{\bfvtheta}$ be the parameter vector of the BDLP $\bfZ$ if $\bfX$ is specified in terms of the BDLP as in model \ref{model1} or, of the stationary distribution $\bfY$ as in model \ref{model2}. Thus, the law of $\bfX$ is determined by the parameter vector $\bfvtheta:=(\lambda,\wt{\bfvtheta})$. Let $t_0=0,t_1=\Delta , \dots ,t_m= m\Delta$, be equally spaced observation times with sampling interval $\Delta>0$.

For $\Delta >0 $, define the random vector
\begin{align}\label{zstar}
    \bfZ^*(\Delta):=  \int_0^{\lambda\Delta}e^{ s}\,\rmd\bfZ( s).
\end{align}
This innovation term plays a critical role throughout our analysis.

\section{LDOUP Using Weak Subordination}\label{sec4}

In this section, we consider a LDOUP where the stationary distribution or BDLP is a weakly subordinated L\'evy process, specifically a WVAG process with drift. This produces the WVAG-OU and OU-WVAG processes, respectively.

When the stationary distribution is a WVAG process, we show that the innovation term $\bfZ^*(\Delta)$ is a discrete and continuous mixture, which is not absolutely continuous. This representation allows for the exact simulation of the corresponding LDOUP. We give the likelihood function in Corollary \ref{likwvagou} below. 

When the BLDP is a WVAG process, we show that $\bfZ^*(\Delta)$ is absolutely continuous, and consequently, we give the likelihood function in Proposition \ref{likprop} in the next section, but there is no known closed-form solution for even the characteristic exponent of $\bfZ^*(\Delta)$.

Throughout this section, let $\bfW\sim WVAG^n(a,\bfalpha,\bfmu,\Sigma,\bfeta)$ for $n\geq 2$.


\subsection{WVAG-OU Process}\label{wvagousec}

Recall that $\bfY\eqd \bfW(1)$ is a stationary distribution of a LDOUP if and only if $\bfW\sim SD^n$. By \cite[Corollary 4.4]{BLM17b}, a sufficient condition for this is $\bfmu=\bfnull$, and this is also necessary provided $\Sigma$ is invertible. This leads to the following definition.

\begin{definition}\label{defn1}
    A process $\bfX\sim WVAG^n\text{-}OU(\lambda,a,\bfalpha,\Sigma,\bfeta)$ is a \emph{weak variance alpha-gamma Ornstein-Uhlenbeck (WVAG-OU) process} if it is a stationary LDOUP with autocorrelation parameter $\lambda>0$ and stationary distribution $\bfY\eqd\bfW(1)$, where $\bfW\sim WVAG^n(a,\bfalpha,\bfnull,\Sigma,\bfeta)$.
\end{definition}

Next, we give an explicit representation of the corresponding BDLP $\bfZ$ and innovation term $\bfZ^*(\Delta)$.

\begin{theorem}\label{wvagouchar}
    Suppose that $\bfX\sim WVAG^n\text{-}OU(\lambda,a,\bfalpha,\Sigma,\bfeta)$. Then the BDLP $\bfZ$ can be characterised in the following equivalent ways:
    \begin{enumerate}[(i)]
        \item $\bfZ$ has characteristic exponent
        \begin{align}\label{wvagouz}
            \Psi_{\bfZ}(\bftheta) = \rmi\skal{\bfeta}{\bftheta}-\frac{a \|\bftheta\|^2_{\bfalpha\tr\Sigma}}{1+\frac 12 \|\bftheta\|^2_{\bfalpha\tr\Sigma}}-\sum_{k=1}^n \frac{\beta_k \alpha_k\Sigma_{kk}\theta_k^2}{1+ \frac 12\alpha_k\Sigma_{kk}\theta_k^2};
        \end{align}
        
        \item $\bfZ\sim CP^n(b,\PPP,\bfeta)$ is a compound Poisson process with drift, where
        \begin{align*}
            b :={} & 2\left(  a + \sum_{k=1}^n \beta_k\right),\\
            \PPP := {}&  \frac{a}{a + \sum_{k=1}^n \beta_k} \PPP_{0} +\sum_{k=1}^n \frac{\beta_k}{a + \sum_{k=1}^n \beta_k} (\bfdelta_0^{\otimes (k-1)}\otimes\PPP_{k}\otimes\bfdelta_0^{\otimes (n-k)}),
        \end{align*}
        and $\PPP_{0}$, $\PPP_k$, $k=1,\dots,n$, are the probability laws of $VG^n(1,\bfnull,\allowbreak\bfalpha\tr\Sigma)$, $VG^1(1,0,\allowbreak\alpha_k\Sigma_{kk})$, respectively.
        
    \end{enumerate}
    Furthermore, $\bfZ^*(\Delta)$, $\Delta> 0$, can be characterised in the following equivalent ways:
    \begin{enumerate}
        \item[(iii)] $\bfZ^*(\Delta)$ has characteristic exponent
        \begin{align}
            \begin{split}
                \Psi_{\bfZ^*(\Delta)}(\bftheta) = {}&\rmi\skal{\bfeta}{\bftheta}(e^{\lambda\Delta}-1)-a\log\left(\frac{1+\frac 12 \|\bftheta\|^2_{\bfalpha\tr\Sigma} e^{2\lambda\Delta}} {1+\frac 12 \|\bftheta\|^2_{\bfalpha\tr\Sigma}} \right) \\&-\sum_{k=1}^n \beta_k\log \left(\frac{1+ \frac 12\alpha_k\Sigma_{kk}\theta_k^2e^{2\lambda\Delta}}{1+ \frac 12\alpha_k\Sigma_{kk}\theta_k^2} \right) ;\label{wvagouzdelta}
            \end{split}
        \end{align}
        
        \item[(iv)] 
        \begin{align}\label{zstarcprep}
            \bfZ^*(\Delta) \eqd \bfeta(e^{\lambda\Delta}-1) + \sum_{l=1}^{N_0(\lambda\Delta)}e^{T_{0l}} \bfV_{0l} +  \sum_{k=1}^n\sum_{l=1}^{N_k(\lambda\Delta)}e^{T_{kl}} V_{kl}\bfe_k,
        \end{align}
        where $N_0\sim P_S(2a), N_k\sim P_S(2\beta_k)$, $\bfV_{0l}\sim VG^n(1,\bfnull,\bfalpha\tr\Sigma)$, $V_{kl}\sim VG^1(1,0,\alpha_k\Sigma_{kk})$, $k=1,\dots,n$, $l\in\NN$, are independent, while $T_{kl}$ is the $l$th arrival time of $N_k$, $k=0,1\dots,n$, $l\in\NN$.
    \end{enumerate}
\end{theorem}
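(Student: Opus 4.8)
\emph{Overall plan.} I would treat the four characterisations in two groups. First I would show that \eqref{wvagouz} is simultaneously the characteristic exponent of the BDLP of the WVAG-OU process and the characteristic exponent of the compound Poisson process with drift in (ii); this settles (i) and (ii). Then I would obtain \eqref{wvagouzdelta} from the exponential--integral formula for the characteristic exponent of a stochastic integral against a L\'evy process, settling (iii). Finally I would derive \eqref{zstarcprep} by decomposing the compound Poisson process of (ii) pathwise and applying Poisson thinning, settling (iv).

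\emph{Parts (i) and (ii).} By \cite[Corollary 4.4]{BLM17b}, taking $\bfmu=\bfnull$ makes $\bfW$ self-decomposable, so Definition \ref{defn1} is meaningful and $\bfY\eqd\bfW(1)$ is the stationary distribution of $\bfX$. By the correspondence between the stationary distribution and the BDLP recalled in Appendix \ref{append}, in the form $\Psi_\bfW(\bftheta)=\int_0^\infty\Psi_\bfZ(e^{-u}\bftheta)\,\rmd u$, differentiating in the radial direction gives $\Psi_\bfZ(\bftheta)=\skal{\bftheta}{\nabla\Psi_\bfW(\bftheta)}$. Since, with $\bfmu=\bfnull$, the argument of each logarithm in \eqref{v-alpha-g-cf} is a homogeneous polynomial of degree two in $\bftheta$, and $\skal{\bftheta}{\nabla\log(1+\tfrac12 p(\bftheta))}=p(\bftheta)/(1+\tfrac12 p(\bftheta))$ for any such $p$ by Euler's identity, this evaluates to exactly \eqref{wvagouz}, which proves (i). For (ii) I would compute the characteristic exponent of $CP^n(b,\PPP,\bfeta)$ with the stated $b$ and $\PPP$: it equals $\rmi\skal{\bfeta}{\bftheta}+b(\Phi_\PPP(\bftheta)-1)$, and since $\PPP$ is a mixture and the characteristic function of $\bfdelta_0^{\otimes(k-1)}\otimes\PPP_k\otimes\bfdelta_0^{\otimes(n-k)}$ at $\bftheta$ is $\Phi_{\PPP_k}(\theta_k)$, this is $\rmi\skal{\bfeta}{\bftheta}+2a(\Phi_{\PPP_0}(\bftheta)-1)+\sum_{k=1}^n 2\beta_k(\Phi_{\PPP_k}(\theta_k)-1)$. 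Computing via the gamma-subordination representation of a variance gamma process gives $\Phi_{\PPP_0}(\bftheta)=(1+\tfrac12\|\bftheta\|^2_{\bfalpha\tr\Sigma})^{-1}$ and $\Phi_{\PPP_k}(\theta_k)=(1+\tfrac12\alpha_k\Sigma_{kk}\theta_k^2)^{-1}$, so each bracket equals $-\tfrac12 Q/(1+\tfrac12 Q)$ with $Q$ the corresponding quadratic form, and rearranging reproduces \eqref{wvagouz}; hence (ii) holds and is equivalent to (i). One may also note that the L\'evy measure of $\bfZ$ is finite and variance gamma laws have finite log-moments, so \eqref{logcond} holds, consistent with $\bfZ$ being an admissible BDLP.

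\emph{Part (iii).} The stochastic integral $\bfZ^*(\Delta)=\int_0^{\lambda\Delta}e^s\,\rmd\bfZ(s)$ of the deterministic integrand $s\mapsto e^s$ against $\bfZ$ has characteristic exponent $\Psi_{\bfZ^*(\Delta)}(\bftheta)=\int_0^{\lambda\Delta}\Psi_\bfZ(e^s\bftheta)\,\rmd s$. Substituting \eqref{wvagouz} and using $\|e^s\bftheta\|^2_{\bfalpha\tr\Sigma}=e^{2s}\|\bftheta\|^2_{\bfalpha\tr\Sigma}$, the drift term integrates to $\rmi\skal{\bfeta}{\bftheta}(e^{\lambda\Delta}-1)$, and each remaining term, after the substitution $u=e^{2s}$, integrates to a logarithm of the form in \eqref{wvagouzdelta}; collecting the pieces gives (iii).

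\emph{Part (iv).} Using (ii), write $\bfZ(s)=\bfeta s+\sum_{j=1}^{N(s)}\bfJ_j$, where $N\sim P_S(b)$ has jump times $\tau_1<\tau_2<\dots$, the $\bfJ_j$ are iid with law $\PPP$, and $N$ and $(\bfJ_j)$ are independent. As $\bfZ$ has finite variation, the stochastic integral is the pathwise Lebesgue--Stieltjes integral, so $\bfZ^*(\Delta)=\bfeta(e^{\lambda\Delta}-1)+\sum_{j:\,\tau_j\le\lambda\Delta}e^{\tau_j}\bfJ_j$. Marking each jump by which component of the mixture $\PPP$ it came from and applying Poisson thinning, the jumps of type $\PPP_0$ form a $P_S(2a)$ process with marks distributed as $VG^n(1,\bfnull,\bfalpha\tr\Sigma)$, and for each $k$ the jumps of the $k$th type form an independent $P_S(2\beta_k)$ process with marks $V_{kl}\bfe_k$, $V_{kl}\sim VG^1(1,0,\alpha_k\Sigma_{kk})$; relabelling the arrival times as $T_{0l}$ and $T_{kl}$ yields \eqref{zstarcprep}. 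Alternatively, \eqref{zstarcprep} can be verified directly by computing the characteristic function of its right-hand side, conditioning on $N_0(\lambda\Delta)$ and each $N_k(\lambda\Delta)$ and using that, given the count, the arrival times are the order statistics of iid uniforms on $[0,\lambda\Delta]$; this collapses the expectation to $\exp(\rmi\skal{\bfeta}{\bftheta}(e^{\lambda\Delta}-1)+2a\int_0^{\lambda\Delta}(\Phi_{\PPP_0}(e^s\bftheta)-1)\,\rmd s+\sum_{k=1}^n 2\beta_k\int_0^{\lambda\Delta}(\Phi_{\PPP_k}(e^s\theta_k)-1)\,\rmd s)$, which equals $\exp(\Psi_{\bfZ^*(\Delta)}(\bftheta))$ by the integral computation in part (iii).

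\emph{Main obstacle.} None of the steps is deep; the argument is essentially bookkeeping around the variance gamma parameter conventions and the degree-two homogeneity of the quadratic forms. The point most prone to error is part (iv): one must check that thinning the compound Poisson process according to the mixture weights produces \emph{independent} Poisson processes of the stated rates, and that the marks of the $k$th thinned process correspond to the scalar law $VG^1(1,0,\alpha_k\Sigma_{kk})$ placed in coordinate $k$ rather than to the full $n$-dimensional law; in the direct characteristic-function verification the corresponding care is needed in handling the conditioning on the Poisson counts and the uniform order statistics.
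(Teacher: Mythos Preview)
Your proposal is correct and follows essentially the same route as the paper for parts (i)--(iii): the paper also obtains \eqref{wvagouz} via $\Psi_\bfZ(\bftheta)=\skal{\nabla_\bftheta\Psi_\bfY(\bftheta)}{\bftheta}$ applied to \eqref{v-alpha-g-cf} with $\bfmu=\bfnull$, identifies this with the compound Poisson characteristic exponent using the VG characteristic function $(1+\tfrac12\|\bftheta\|_\Sigma^2)^{-1}$, and derives \eqref{wvagouzdelta} by integrating $\Psi_\bfZ(e^t\bftheta)$ over $[0,\lambda\Delta]$.

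The one place where the paper organises things slightly differently is (iv). Rather than starting from the single compound Poisson process $CP^n(b,\PPP,\bfeta)$ and thinning by mixture component, the paper uses the decomposition already obtained in the proof of (ii), namely $\bfZ\eqd\bfeta I+\sum_{k=0}^n\bfZ_k$ with $\bfZ_0\sim CP^n(2a,\PPP_0)$ and $\bfZ_k\sim CP^n(2\beta_k,\bfdelta_0^{\otimes(k-1)}\otimes\PPP_k\otimes\bfdelta_0^{\otimes(n-k)})$ independent, and then applies a separate lemma (Lemma~\ref{cppzlem}) stating that for any $\bfZ\sim CP^n(b,\PPP,\bfeta)$ one has $\bfZ^*(\Delta)\eqd\bfeta(e^{\lambda\Delta}-1)+\sum_{l=1}^{N(\lambda\Delta)}e^{T_l}\bfJ_l$; this lemma is proved by approximating $s\mapsto e^s$ by simple functions. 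Your pathwise Lebesgue--Stieltjes argument plus thinning is an equivalent and equally valid route, and your alternative direct characteristic-function verification via uniform order statistics is essentially a self-contained version of what the paper's Lemma~\ref{cppzlem} accomplishes. The paper's ordering has the small advantage that the independence of the $n+1$ Poisson streams is built in from the start, so no separate thinning argument is needed.
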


\begin{remark}
    The random vector associated with the probability law $\PPP$ is
    \begin{align*}
        \begin{cases}
            \bfV_0(1) & \text{with probability $\frac{a}{a + \sum_{k=1}^n \beta_k}$},\\
            V_1(1)\bfe_1& \text{with probability $\frac{\beta_1}{a + \sum_{k=1}^n \beta_k}$},\\
            \vdots\\
            V_n(1)\bfe_n & \text{with probability $\frac{\beta_n}{a + \sum_{k=1}^n \beta_k}$},
        \end{cases}
    \end{align*}
    where $\bfV_0\sim VG^n(1,\bfnull,\bfalpha\tr\Sigma)$, $V_k\sim VG^1(1,0,\alpha_k\Sigma_{kk})$, $k=1,\dots,n$. Also, since these are VG distributions with $b=1$, they are also Laplace distributions. \qed
\end{remark}

\begin{remark}\label{simrem}
    For a general $\bfX\sim \operatorname{\mathit{OU-\bfZ}}(\lambda)$, the observations can be simulated using
    \begin{align}\label{obsx}
        \bfX(t_k) = e^{-\lambda\Delta}\left( \bfX(t_{k-1}) +\bfZ^*(\Delta)^{(k)}\right), \quad k =1,\dots, m,
    \end{align}
    where $( \bfZ^*(\Delta)^{(k)})_{k=1,\dots,m}$ are iid copies of $\bfZ^*(\Delta)$ (see Remark \ref{indepremark}).
    
    Thus, exact simulations of $\bfX\sim WVAG^n\text{-}OU(\lambda,a,\bfalpha,\Sigma,\bfeta)$ can be obtained using \eqref{obsx} and $\bfZ^*(\Delta)$ simulated with the explicit representation \eqref{zstarcprep}. Note that $\bfX(0)\eqd \bfY$ is the stationary distribution in Definition \ref{defn1}, which is a WVAG distribution. This can be simulated as a sum of independent VG random variables by \cite[Remark 3.9]{BLM17a}, which says that $\bfW\sim WVAG^n(a,\bfalpha,\bfmu,\Sigma,\bfeta)$ has explicit representation
    \begin{align}\label{wvagpropb}
        \bfW\eqd \bfeta I+ \bfV_0+(V_1,\dots,V_n),
    \end{align}
    where $\bfV_0\sim VG^n( a,a\bfalpha\tr\bfmu,a\bfalpha\tr\Sigma)$, $V_k\sim VG^1(\beta_k,\alpha_k\beta_k\mu_k,\alpha_k\beta_k\Sigma_{kk})$, $k=1,\dots,n$, are independent. This simulation method is implemented in Section \ref{simresultsec} below. \qed
\end{remark}

From Theorem~\ref{wvagouchar} (iv), $\bfZ^*(\Delta)$ does not have a Lebesgue density since it has a strictly positive probability of taking the value $\bfeta(e^{\lambda\Delta}-1)$, and $\bfX$ is a finite activity process since $\bfZ$ is a compound Poisson process. In Corollaries \ref{wvagoudenlem} and \ref{likwvagou} below, we restrict to the case $n=2$. Define $\bfzeta=(\zeta_1,\zeta_2):= \bfeta(e^{\lambda\Delta}-1)$. Here, $\bfZ^*(\Delta)$ is a discrete and continuous mixture over four mutually singular measures. There is a strictly positive probability of each of the following cases: by time $\lambda\Delta$, none of $N_1,N_2,N_0$ jump, giving a degenerate random vector $\bfzeta$; only $N_1$ jumps, giving an absolutely continuous distribution in the first component and a degenerate random variable $\zeta_2$ in the second component; only $N_2$ jumps, giving a degenerate random variable $\zeta_1$ in the first component and an absolutely continuous distribution in the second component; or otherwise, giving an absolutely continuous distribution overall. The next lemma gives a Radon-Nikodym derivative of $\bfZ^*(\Delta)$.

\begin{corollary}\label{wvagoudenlem} Let $n=2$ and $\Sigma$ be invertible. Define
    \begin{align*}
        p :={}& e^{-2(a+\beta_1+\beta_2)\lambda\Delta},\\
        p_1 :={}&e^{-2a\lambda\Delta}(1-e^{-2\beta_1 \lambda\Delta})e^{-2\beta_2 \lambda\Delta},\\
        p_2 :={}&e^{-2a\lambda\Delta}e^{-2\beta_1 \lambda\Delta}(1-e^{-2\beta_2 \lambda\Delta}),\\
        p_0 :={}& 1-p-p_1-p_2.
    \end{align*}
    Define the subsets
    \begin{align*}
        S_1 :={}& \{(x_1,x_2)\in\RR^2:\text{$x_1 \neq \zeta_1$ and $x_2=\zeta_2$}  \} ,\\
        S_2 :={}& \{(x_1,x_2)\in\RR^2:\text{$x_1 = \zeta_1$ and $x_2 \neq \zeta_2$}  \},\\
        S_0 :={}&\RR^2 \setminus  \left(\{\bfzeta\}\cup S_1 \cup S_2 \right).
    \end{align*}
    There exist Lebesgue densities $f_1,f_2$ and $f_0$  corresponding respectively to the characteristic functions
    \begin{align}
        \Phi_k(\theta_k) ={}& \frac{  \displaystyle{\Bigg(\frac{1+ \frac 12\alpha_k\Sigma_{kk}\theta_k^2e^{2\lambda\Delta}}{1+ \frac 12\alpha_k\Sigma_{kk}\theta_k^2} \Bigg)^{-\beta_k}}-e^{-2\beta_k\lambda\Delta}}{1-e^{-2\beta_k\lambda\Delta}}, \quad k= 1,2,\label{psi12}\\
        \Phi_0(\bftheta) ={}& \frac{e^{-\rmi\skal{\bftheta}{\bfzeta}}\Phi_{\bfZ^*(\Delta)}(\bftheta)-p- p_1 \Phi_1(\theta_1)-p_2 \Phi_2(\theta_2)}{p_0},\quad \bftheta=(\theta_1,\theta_2)\in\RR^2.\label{psi0}
    \end{align}
    Furthermore, the density of $\bfZ^*(\Delta)$ with respect to the measure
    \begin{align}
        \nu:= \bfdelta_{\bfzeta}  + \LLL\otimes \bfdelta_{\zeta_2} + \bfdelta_{\zeta_1}\otimes \LLL + \LLL^2 \label{dommeas}
    \end{align}
    is
    \begin{align}
        f_{\bfZ^*(\Delta)}(\bfz):={}&\frac{\rmd \PPP_{\bfZ^*(\Delta)}}{\rmd \nu}(\bfz)\nonumber \\
        \begin{split}
            ={}& p \eins_{\{\bfzeta\}}(\bfz) + \sum_{k=1}^2 p_k f_{k}(z_k-\zeta_k) \eins_{S_k}(\bfz)\\
            &+ p_0 f_{0}(\bfz-\bfzeta) \eins_{S_0}(\bfz),\quad \bfz=(z_1,z_2)\in\RR^2.\label{zstarden}
        \end{split}
    \end{align}
\end{corollary}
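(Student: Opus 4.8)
The plan is to condition the explicit representation \eqref{zstarcprep} of Theorem~\ref{wvagouchar}(iv) on which of the three Poisson counts $N_0(\lambda\Delta)$, $N_1(\lambda\Delta)$, $N_2(\lambda\Delta)$ are positive by time $\lambda\Delta$. Partition the probability space into the event $E$ on which all three counts vanish, the event $E_1$ on which $N_1(\lambda\Delta)\ge1$ while $N_0(\lambda\Delta)=N_2(\lambda\Delta)=0$, the event $E_2$ defined symmetrically, and $E_0:=(E\cup E_1\cup E_2)^C$. Since the three Poisson processes are independent with $\PP(N_k(\lambda\Delta)=0)=e^{-2\beta_k\lambda\Delta}$ for $k=1,2$ and $\PP(N_0(\lambda\Delta)=0)=e^{-2a\lambda\Delta}$, these events have probabilities $p,p_1,p_2$ and $p_0=1-p-p_1-p_2$; writing $x:=e^{-2\beta_1\lambda\Delta}$, $y:=e^{-2\beta_2\lambda\Delta}$ one gets $p+p_1+p_2=e^{-2a\lambda\Delta}\bigl(1-(1-x)(1-y)\bigr)<1$, so $p_0>0$, while $p_1,p_2>0$ because $a,\beta_1,\beta_2>0$. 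On $E$ the representation \eqref{zstarcprep} is the constant $\bfzeta$; on $E_1$ it is $\bfzeta+U_1\bfe_1$, where $U_k:=\sum_{l=1}^{N_k(\lambda\Delta)}e^{T_{kl}}V_{kl}$; on $E_2$ it is $\bfzeta+U_2\bfe_2$; and on $E_0$ it is $\bfzeta+\bfU_0+U_1\bfe_1+U_2\bfe_2$, where $\bfU_0:=\sum_{l=1}^{N_0(\lambda\Delta)}e^{T_{0l}}\bfV_{0l}$, the three summands $\bfU_0,U_1,U_2$ being independent.

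Next I would compute the relevant conditional characteristic functions. Conditioning on the value of $N_k(\lambda\Delta)$ and its arrival times and using the $VG^1(1,0,\alpha_k\Sigma_{kk})$ characteristic function $\theta\mapsto(1+\tfrac12\alpha_k\Sigma_{kk}\theta^2)^{-1}$, a short computation (the content of Campbell's formula, already implicit in the proof of Theorem~\ref{wvagouchar}) gives
\[
\EE\bigl[e^{\rmi\theta_k U_k}\bigr]=\exp\!\left(2\beta_k\int_0^{\lambda\Delta}\bigl[(1+\tfrac12\alpha_k\Sigma_{kk}\theta_k^2 e^{2t})^{-1}-1\bigr]\rmd t\right)=\left(\frac{1+\tfrac12\alpha_k\Sigma_{kk}\theta_k^2 e^{2\lambda\Delta}}{1+\tfrac12\alpha_k\Sigma_{kk}\theta_k^2}\right)^{-\beta_k}.
\]
Conditioning instead on $\{N_k(\lambda\Delta)=0\}$ versus $\{N_k(\lambda\Delta)\ge1\}$ gives $\EE[e^{\rmi\theta_k U_k}]=e^{-2\beta_k\lambda\Delta}+(1-e^{-2\beta_k\lambda\Delta})\Phi_k(\theta_k)$, where $\Phi_k$ is the characteristic function of $U_k$ given $\{N_k(\lambda\Delta)\ge1\}$; solving recovers \eqref{psi12}. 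On $\{N_k(\lambda\Delta)\ge1\}$, $U_k$ contains the summand $e^{T_{k1}}V_{k1}$, which given $T_{k1}$ is a nonzero scalar times an absolutely continuous VG variable, so $U_k$ given $\{N_k(\lambda\Delta)\ge1\}$ is absolutely continuous with some density $f_k$ and is nonzero a.s. Decomposing $\Phi_{\bfZ^*(\Delta)}$ over $E,E_1,E_2,E_0$ then yields $\Phi_{\bfZ^*(\Delta)}(\bftheta)=e^{\rmi\skal{\bftheta}{\bfzeta}}\bigl(p+p_1\Phi_1(\theta_1)+p_2\Phi_2(\theta_2)+p_0\Phi_0(\bftheta)\bigr)$, where $\Phi_0$ is the characteristic function of $\bfZ^*(\Delta)-\bfzeta$ given $E_0$; rearranging gives exactly \eqref{psi0}.

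It then remains to show that the law of $\bfZ^*(\Delta)-\bfzeta$ given $E_0$ is absolutely continuous on $\RR^2$, so that $f_0$ exists. This is where invertibility of $\Sigma$ is used: for $n=2$,
\[
\det(\bfalpha\tr\Sigma)=\alpha_1\alpha_2\Sigma_{11}\Sigma_{22}-(\alpha_1\wedge\alpha_2)^2\Sigma_{12}^2\ge(\alpha_1\wedge\alpha_2)^2\det\Sigma>0,
\]
so $\bfalpha\tr\Sigma$ is positive definite and $\bfV_{0l}\sim VG^2(1,\bfnull,\bfalpha\tr\Sigma)$ has a Lebesgue density on $\RR^2$; hence on $\{N_0(\lambda\Delta)\ge1\}$ (which is contained in $E_0$), $\bfU_0$ and therefore the whole sum is absolutely continuous on $\RR^2$, while on the remaining part of $E_0$, where $N_0(\lambda\Delta)=0$ and $N_1(\lambda\Delta),N_2(\lambda\Delta)\ge1$, the vector $U_1\bfe_1+U_2\bfe_2$ has the product density $(x_1,x_2)\mapsto f_1(x_1)f_2(x_2)$. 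Averaging over these two parts, the conditional law is absolutely continuous on $\RR^2$ with a density $f_0$, and since $\{\bfzeta\}\cup S_1\cup S_2$ is $\LLL^2$-null it is carried by $S_0$. Finally, $\{\bfzeta\},S_1,S_2,S_0$ partition $\RR^2$ and, restricted to these four sets, the measure $\nu$ of \eqref{dommeas} coincides with $\bfdelta_{\bfzeta}$, $\LLL\otimes\bfdelta_{\zeta_2}$, $\bfdelta_{\zeta_1}\otimes\LLL$, $\LLL^2$, respectively; writing $\PPP_{\bfZ^*(\Delta)}=p\,\bfdelta_{\bfzeta}+p_1\PPP_1+p_2\PPP_2+p_0\PPP_0$, where $\PPP_j$ is the conditional law of $\bfZ^*(\Delta)$ given $E_j$ (carried by $S_j$, with the translated density identified above), shows $\PPP_{\bfZ^*(\Delta)}\ll\nu$ with Radon--Nikodym derivative \eqref{zstarden}.

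The step I expect to be the main obstacle is the absolute-continuity argument on the generic event $E_0$ — in particular, pinning down exactly how invertibility of $\Sigma$ enters through $\bfalpha\tr\Sigma$, and handling the sub-event where the common jump component $\bfU_0$ is absent — together with the routine but delicate bookkeeping needed to confirm that $\nu$ is the correct dominating measure and that the four mixture components live on mutually singular sets.
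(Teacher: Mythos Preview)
Your proposal is correct and follows essentially the same route as the paper: both condition on which of $N_0(\lambda\Delta),N_1(\lambda\Delta),N_2(\lambda\Delta)$ vanish, derive $\Phi_k$ by splitting $\EE[e^{\rmi\theta_k U_k}]$ over $\{N_k(\lambda\Delta)=0\}$ versus $\{N_k(\lambda\Delta)\ge1\}$, obtain $\Phi_0$ by the same four-term decomposition of $\Phi_{\bfZ^*(\Delta)}$, and then assemble the Radon--Nikodym derivative from the four mutually singular pieces. The only cosmetic differences are that the paper invokes Oppenheim's inequality for $\det(\bfalpha\tr\Sigma)>0$ where you do the direct $2\times2$ computation, and the paper cites an external result for the density with respect to a sum of mutually singular measures where you argue directly via the partition; your treatment of the sub-event $\{N_0(\lambda\Delta)=0,\,N_1(\lambda\Delta)\ge1,\,N_2(\lambda\Delta)\ge1\}$ inside $E_0$ is in fact slightly more explicit than the paper's.
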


In practice, the Lebesgue densities $f_1,f_2$ and $f_0$ in Corollary \ref{wvagoudenlem} are obtained by applying Fourier inversion to the respective characteristic functions.

\begin{remark}\label{exactrecover}
    In the representation given in Theorem~\ref{wvagouchar} (iv), if the Poisson processes do not jump by time $\lambda\Delta$, then the sample path of $\bfX$ is deterministic for some time, possibly allowing for the exact recovery of the parameters  $\lambda$ and $\bfeta$. 
    
    Let $(x_{1,0},\dots,x_{1,m})$ be the vector of observations of $(X_1(0),\dots,X_1(t_m))$. Consider a triplet of consecutive observations $x_{1,k},x_{1,k+1},x_{1,k+2}$. By \eqref{obsx}, if $Z^*_1(\Delta)=\eta_1(e^{\lambda\Delta}-1)$ in the time interval $[t_k,t_{k+2}]$ (there is no jump), then
    \begin{align*}
        x_{1,k+1} = e^{-\lambda\Delta}x_{1,k} + (1-e^{-\lambda\Delta})\eta_1,\quad
        x_{1,k+2} = e^{-\lambda\Delta}x_{1,k+1} + (1-e^{-\lambda\Delta})\eta_1.
    \end{align*}
    
    Therefore, to recover $\lambda$ and $\eta_1$, for every triplet of consecutive observations $x_{1,k},x_{1,k+1},\allowbreak x_{1,k+2}$, determine the slope $c$ and intercept $d$ of the line $y=cx+d$ passing the points $(x,y)=(x_{1,k},x_{1,k+1}), (x_{1,k+1},x_{1,k+2})$. Doing this over all $k=0,\dots,m-2$, whenever the same value of $c$ and $d$ occurs more than once, we must have $\lambda = -\log(c)/\Delta$ and $\eta_1 = d/(1-c)$. Note that if $Z_1^*(\Delta)\neq \eta_1(e^{\lambda\Delta}-1)$ in the time interval $[t_k,t_{k+2}]$ (there is at least one jump), then $Z_1^*(\Delta)$ is absolutely continuous so all other values of $c$ and $d$ are almost surely different.
    
    A similar method on other components $X_2,\dots,X_n$ can be used to recover $\eta_2,\dots,\eta_n$, respectively. Thus, given a sufficiently large $m$ or small $\Delta$, it is always possible to exactly recover $\lambda$ and $\bfeta$. This method is analogous to the exact recovery of $\lambda$ in the Gamma-OU process discussed in \cite[Section 4.1]{vstt}. \qed
\end{remark}

Recall that, in general, the likelihood function of $(\bfX(0),\dots, \bfX(t_m))$ where the law of $\bfX$ depends on a parameter vector $\bfvtheta$ is $\bfvtheta\mapsto {\rmd\PPP_{\bfvtheta}}/{\rmd \nu^*}$, the Radon-Nikodym derivative of the probability law $\PPP_{\bfvtheta}$ of $(\bfX(0),\dots, \bfX(t_m))$ with respect to a dominating measure $\nu^*$, where $\PPP_{\bfvtheta}$ is absolutely continuous with respect to $\nu^*$ for all possible values of $\bfvtheta$ (see \cite[Section 1.3.1]{sche95}). Ordinarily, the dominating measure is Lebesgue measure $\nu^*=\LLL^n$, but since $\bfZ^*(\Delta)$ is a discrete and continuous mixture, here it is more complicated.

Interestingly, to determine the likelihood function for the WVAG-OU process, it is crucial to assume $\lambda$ and $\bfeta$ are known because the measure in \eqref{dommeas} depends on $\lambda$ and $\bfeta$, so it is not clear how to find a dominating measure otherwise. We justify this assumption based on the ability to exactly recover $\lambda$ and $\bfeta$ as discussed above. We derive a likelihood function using the AR(1) structure of the observations.


\begin{corollary}\label{likwvagou}
    Let $n=2$ and $\Sigma$ be invertible. Suppose that $\bfX\sim WVAG^2\text{-}OU\allowbreak(\lambda,a,\bfalpha,\Sigma,\bfeta)$. Let $\bfx=(\bfx_0,\dots,\bfx_m)$ be the vector of observations of $(\bfX(0),\dots,\allowbreak\bfX(t_m))$. Assume that $\lambda$ and $\bfeta$ are known, so that the parameter vector is $\bfvtheta:=(a,\bfalpha,\Sigma)$. Then the likelihood function is $\bfvtheta\mapsto L(\bfvtheta,\bfx)$, where
    \begin{align}\label{wvagoulik}
        L(\bfvtheta,\bfx) = f_{\bfX(0)}(\bfx_0)\prod_{k=1}^m f_{\bfZ^*(\Delta)}(e^{\lambda\Delta}\bfx_k-\bfx_{k-1}),
    \end{align}
    $ f_{\bfX(0)}$ is the Lebesgue density of $\bfX(0)$ and $f_{\bfZ^*(\Delta)}$ is given in \eqref{zstarden}.
\end{corollary}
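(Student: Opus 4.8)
The plan is to combine the AR(1) structure of the observations with an affine change of variables that turns the joint law into a product measure, and then read off the likelihood as a Radon--Nikodym derivative against a dominating measure that does not involve the unknown parameters $\bfvtheta=(a,\bfalpha,\Sigma)$.

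\emph{Step 1: Markov factorization.} From \eqref{ousoln} a direct computation gives $e^{\lambda\Delta}\bfX(t_k)-\bfX(t_{k-1})=\int_0^{\lambda\Delta}e^{u}\,\rmd\bfZ(\lambda t_{k-1}+u)=:\bfZ^*(\Delta)^{(k)}$. By the independent, stationary increments of $\bfZ$ (cf.\ \eqref{obsx} and Remark~\ref{indepremark}), the vectors $\bfZ^*(\Delta)^{(k)}$, $k=1,\dots,m$, are iid copies of $\bfZ^*(\Delta)$, and each $\bfZ^*(\Delta)^{(k)}$ is independent of $(\bfX(0),\bfX(t_1),\dots,\bfX(t_{k-1}))$. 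Hence the affine bijection $T\colon(\RR^2)^{m+1}\to(\RR^2)^{m+1}$ given by $T(\bfx_0,\dots,\bfx_m):=(\bfx_0,\,e^{\lambda\Delta}\bfx_1-\bfx_0,\,\dots,\,e^{\lambda\Delta}\bfx_m-\bfx_{m-1})$ maps the law $\PPP_{\bfvtheta}$ of $(\bfX(0),\dots,\bfX(t_m))$ onto the product law $\PPP_{\bfX(0)}\otimes\PPP_{\bfZ^*(\Delta)}^{\otimes m}$.

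\emph{Step 2: densities of the factors.} Since $\bfX(0)\eqd\bfW(1)$ with $\bfW\sim WVAG^2(a,\bfalpha,\bfnull,\Sigma,\bfeta)$, the representation \eqref{wvagpropb} writes $\bfX(0)$ as an independent sum containing $(V_1,V_2)$ with $V_k\sim VG^1(\beta_k,0,\alpha_k\beta_k\Sigma_{kk})$; as $\Sigma$ is invertible, $\Sigma_{kk}>0$, so each $V_k(1)$ is a nondegenerate normal variance mixture and hence absolutely continuous, whence $(V_1(1),V_2(1))$ and therefore $\bfX(0)$ have a Lebesgue density $f_{\bfX(0)}$ on $\RR^2$. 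By Corollary~\ref{wvagoudenlem}, $\bfZ^*(\Delta)$ has density $f_{\bfZ^*(\Delta)}=\rmd\PPP_{\bfZ^*(\Delta)}/\rmd\nu$ against $\nu$ in \eqref{dommeas}. Consequently $\PPP_{\bfX(0)}\otimes\PPP_{\bfZ^*(\Delta)}^{\otimes m}\ll\LLL^2\otimes\nu^{\otimes m}$ with density $(\bfy_0,\dots,\bfy_m)\mapsto f_{\bfX(0)}(\bfy_0)\prod_{k=1}^m f_{\bfZ^*(\Delta)}(\bfy_k)$.

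\emph{Step 3: pulling back.} Pushing forward by the bijection $T^{-1}$ preserves absolute continuity and transforms densities by composition with $T$, so $\PPP_{\bfvtheta}\ll\nu^*:=(T^{-1})_*(\LLL^2\otimes\nu^{\otimes m})$ with density $\bfx\mapsto f_{\bfX(0)}(\bfx_0)\prod_{k=1}^m f_{\bfZ^*(\Delta)}(e^{\lambda\Delta}\bfx_k-\bfx_{k-1})=L(\bfvtheta,\bfx)$, which is \eqref{wvagoulik}. To conclude that $L(\cdot,\bfx)$ is a likelihood function in the sense of \cite[Section 1.3.1]{sche95}, observe that $T$ depends only on $\lambda$ and $\Delta$ while $\nu$ depends only on $\bfzeta=\bfeta(e^{\lambda\Delta}-1)$, so $\nu^*$ depends on the \emph{known} quantities $\lambda,\bfeta$ (and $\Delta$) but not on $\bfvtheta$; hence the same $\nu^*$ dominates $\PPP_{\bfvtheta}$ for every admissible $\bfvtheta$ by Steps~1--2. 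The one genuinely delicate point is exactly this last dominating-measure bookkeeping: one must check that a \emph{single}, $\bfvtheta$-independent measure dominates $\PPP_{\bfvtheta}$ uniformly over the unknown parameters, which is where the hypothesis that $\lambda$ and $\bfeta$ are known is indispensable, since the supports of the atomic parts of $\nu$ (equivalently the singular sets $\{\bfzeta\},S_1,S_2$ of Corollary~\ref{wvagoudenlem}) are pinned to $\bfzeta$. Everything else — the affine change of variables, the product-measure factorization via the Markov property, and the identification of the conditional densities with $f_{\bfZ^*(\Delta)}$ — is routine.
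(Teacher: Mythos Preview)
Your proof is correct and follows essentially the same route as the paper: the paper applies a general Markov likelihood lemma (Lemma~\ref{markovllf}) whose content is exactly your Steps~1--3, namely the AR(1)/Markov factorization, the identification of $M_{\bfx_{k-1}}^{-1}(\bfx_k)=e^{\lambda\Delta}\bfx_k-\bfx_{k-1}$, the use of $f_{\bfZ^*(\Delta)}$ with respect to $\nu$ from Corollary~\ref{wvagoudenlem}, and the observation that the resulting dominating measure depends only on the known quantities $\lambda,\bfeta$. Your justification that $\bfX(0)$ has a Lebesgue density via the independent one-dimensional VG summands in \eqref{wvagpropb} is a valid alternative to the paper's citation of \cite[Equation~(2.10)]{BKMS16}.
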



\begin{remark}
    Corollaries \ref{wvagoudenlem} and \ref{likwvagou} can be extended to the cases $n \geq 3$ using similar arguments. For instance, if $n=3$, by Theorem~\ref{wvagouchar} (iv), we would need to consider many additional cases such as $N_0(\lambda\Delta)=0, N_1(\lambda\Delta)>0, N_2(\lambda\Delta)>0, N_3(\lambda\Delta)=0$; and $N_0(\lambda\Delta)=0, N_1(\lambda\Delta)>0, N_2(\lambda\Delta)=0, N_3(\lambda\Delta)>0$, which would lead to additional mutually singular measures $\LLL^2\otimes \bfdelta_{\zeta_{3}}$ and $\LLL\otimes \bfdelta_{\zeta_{2}}\otimes\LLL$, and so on. \qed
\end{remark}

\begin{remark}\label{marginallem}
    Obviously, if $\bfX=(X_1,\dots,  X_n)\sim \operatorname{\mathit{OU-\bfZ}}(\lambda)$, where $\bfZ=(Z_1,\dots,Z_n)\sim L^n$, then $X_k\sim \operatorname{\mathit{OU-Z_k}}(\lambda)$, $k = 1,\dots, n$. This follows by examining the probability transition distribution of $\bfX$ (see  \cite[Lemma 17.1]{sato99}). \qed
\end{remark}

\begin{remark} \label{marglik}
    By Remark \ref{marginallem} and the fact that the marginal components of a WVAG process are VG processes (see \cite[Remark 3.1]{BLM17a}), it follows that $X_k\sim Y_k\text{-}OU(\lambda)$ is a VG-OU process, where $Y_k\sim VG^1(1/\alpha_k,\mu_k,\allowbreak\Sigma_{kk},\eta_k)$ and $\mu_k=0$. But as a univariate process, $X_k$ is a stationary LDOUP for all $\mu_k\in\RR$, and the restriction $\bfmu=\bfnull$ is only needed for the multivariate process $\bfX$ to be stationary. However, we do not need to deal with the $\mu_k\neq 0$ case here. 
    
    In Section \ref{estresultsec} below, we also need the likelihood function for the marginal components, where $(x_{k,0},\dots,x_{k,m}) $ is the the vector of observations of $(X_k(0),\dots,\allowbreak X_k(t_m))$, $k=1,2$. From Theorem \ref{wvagouchar} (ii), the BDLP of $X_k$ is $Z_k\sim CP^1(2/\alpha_k,\PPP_k,\eta_k)$, and using similar arguments as in the proof of Corollaries \ref{wvagoudenlem} and \ref{likwvagou}, the likelihood function here is  $\bfvtheta:=(\alpha_k,\Sigma_{kk})\mapsto L(\bfvtheta,\bfx) =f_{X_k(0)}(x_{k,0})\prod_{l=1}^m f_{Z^*_k(\Delta)}(e^{\lambda\Delta}x_{k,l}-x_{k-1,l})$, where
    \begin{align*}
        f_{Z^*_k(\Delta)}(z) = p\eins_{\{\zeta_k\}} + (1-p)\eins_{\{\zeta_k\}^C}(z)f(z-\zeta_k), \quad z\in\RR,
    \end{align*}
    $p:= e^{-(2/\alpha_k)\lambda\Delta}$ and $f$ is the Lebesgue density corresponding to the characteristic function
    \begin{align*}
        \Phi(\theta) ={}& \frac{  \displaystyle{\Bigg(\frac{1+ \frac 12\alpha_k\Sigma_{kk}\theta_k^2e^{2\lambda\Delta}}{1+ \frac 12\alpha_k\Sigma_{kk}\theta_k^2} \Bigg)^{-1/\alpha_k}}-p}{1-p},\quad \theta\in\RR.
    \end{align*}\qed
\end{remark}

\subsection{OU-WVAG Process}\label{ouwvagsec}

Next, consider the LDOUP where the WVAG process is used as the BDLP.

\begin{definition}
    A process $\bfX\sim OU\text{-}WVAG^n(\lambda,a,\bfalpha,\bfmu,\Sigma,\bfeta)$ is an \emph{Orn\-stein-Uhlenbeck weak variance alpha-gamma process (OU-WVAG)} if it is a stationary LDOUP with autocorrelation parameter $\lambda>0$ and BDLP $\bfZ\sim WVAG^n(a,\bfalpha,\bfmu,\Sigma,\bfeta)$.
\end{definition}

\begin{proposition}\label{ouwvagprop}
    Suppose $\bfZ\sim WVAG^n(a,\bfalpha,\bfmu,\Sigma,\bfeta)$. Then:
    \begin{enumerate}[(i)]
        \item  $\bfZ$ satisfies the log moment condition \eqref{logcond};
        \item $\bfZ^*(\Delta)$ is absolutely continuous.
    \end{enumerate}
\end{proposition}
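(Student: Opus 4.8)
The plan is to treat the two parts separately; (i) is routine, and essentially all the content is in (ii).

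\emph{Part (i).} I would prove the stronger statement that $\bfZ$ has a finite first absolute moment. From the explicit representation \eqref{wvagpropb}, $\bfZ(1)\eqd\bfeta+\bfV_0(1)+(V_1(1),\dots,V_n(1))$ is a sum of independent variance gamma (equivalently, Laplace-type) random vectors, each of which has moments of every order; alternatively, one reads off from \eqref{v-alpha-g-cf} that $\Psi_\bfZ$ is smooth (indeed analytic) at the origin, so $\bfZ(1)$ has finite mean. In particular $\EE\|\bfZ(1)\|<\infty$, and by the standard correspondence between moments of a L\'evy process and integrability of its L\'evy measure near infinity (e.g. \cite[Theorem 25.3]{sato99}), $\int_{\{\|\bfz\|>1\}}\|\bfz\|\,\ZZZ(\rmd\bfz)<\infty$; since $\log\|\bfz\|\le\|\bfz\|$ on $(2\DD)^C$, condition \eqref{logcond} follows.

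\emph{Part (ii): reduction to one dimension.} The strategy is to exploit the independent-summand structure of the WVAG BDLP. By \eqref{wvagpropb}, at the process level $\bfZ\eqd\bfeta I+\bfV_0+(V_1,\dots,V_n)$ with $\bfV_0,V_1,\dots,V_n$ independent. Applying $\int_0^{\lambda\Delta}e^{s}\,\rmd(\cdot)$ to each summand --- legitimate since integration against a L\'evy process is linear in the integrator and sends independent integrators to independent integrals --- yields
\begin{align*}
    \bfZ^*(\Delta)\eqd{}&\bfeta(e^{\lambda\Delta}-1)+\int_0^{\lambda\Delta}e^{s}\,\rmd\bfV_0(s)\\
    &+\left(\int_0^{\lambda\Delta}e^{s}\,\rmd V_1(s),\dots,\int_0^{\lambda\Delta}e^{s}\,\rmd V_n(s)\right),
\end{align*}
where the three groups are independent and the $n$ integrals in the last group are mutually independent. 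A random vector with independent, individually absolutely continuous coordinates is absolutely continuous on $\RR^n$, and convolving an absolutely continuous law with any independent law again produces an absolutely continuous law; hence it suffices to show that each $L_k:=\int_0^{\lambda\Delta}e^{s}\,\rmd V_k(s)$ is absolutely continuous on $\RR$, where $V_k\sim VG^1(\beta_k,\alpha_k\beta_k\mu_k,\alpha_k\beta_k\Sigma_{kk})$ and $\beta_k>0$.

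\emph{Part (ii): the one-dimensional core.} Assume $\Sigma_{kk}>0$ (the degenerate case is noted below). Then $V_k$ is a pure-jump L\'evy process of infinite activity whose L\'evy measure $\nu_k$ is absolutely continuous on $\RR\setminus\{0\}$, with density of order $|x|^{-1}$ near the origin. Being a stochastic integral against a L\'evy process, $L_k$ is infinitely divisible, and its L\'evy measure is the image of $\eins_{[0,\lambda\Delta]}(s)\,\rmd s\otimes\nu_k(\rmd x)$ under $(s,x)\mapsto e^{s}x$; since $x\mapsto e^{s}x$ is, for each fixed $s$, a diffeomorphism of $\RR\setminus\{0\}$, this image is again absolutely continuous and it plainly has infinite total mass. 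I would then conclude by the classical criterion that an infinitely divisible distribution on $\RR$ whose L\'evy measure has an absolutely continuous part of infinite total mass is itself absolutely continuous (see the treatment of distributional properties of infinitely divisible laws in \cite{sato99}). Alternatively, one checks that $V_k$ is self-decomposable and that consequently $L_k$ is self-decomposable --- its L\'evy measure inherits a nonincreasing radial density --- so $L_k$ is absolutely continuous because a non-degenerate self-decomposable law is.

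\emph{Main obstacle and loose ends.} The real difficulty is identifying the correct absolute-continuity criterion: Fourier inversion is unavailable, since a short computation gives $|\Phi_{L_k}(\theta)|\sim c|\theta|^{-2\beta_k\lambda\Delta}$ as $|\theta|\to\infty$, which fails to be integrable when $\beta_k$ or $\lambda\Delta$ is small --- this is precisely why there is no closed-form expression to invert and why one must argue structurally through the L\'evy measure (or through self-decomposability) rather than via the characteristic function. The only other point requiring care is the case $\Sigma_{kk}=0$: then $V_k$ is a scalar multiple of a gamma subordinator, whose L\'evy measure is still absolutely continuous and of infinite total mass whenever $\mu_k\neq 0$, so the same argument applies; this covers every case once one invokes the standing non-degeneracy of the model (e.g. $\Sigma$ positive definite, under which in fact the summand $\int_0^{\lambda\Delta}e^{s}\,\rmd\bfV_0(s)$ is already $n$-dimensionally absolutely continuous and the one-dimensional reduction can be bypassed).
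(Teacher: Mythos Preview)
Your proposal is correct and shares the same architecture as the paper's proof: both parts use the decomposition \eqref{wvagpropb} into independent VG summands, and for (ii) both reduce to showing that each one-dimensional $L_k=\int_0^{\lambda\Delta}e^s\,\rmd V_k(s)$ is absolutely continuous, then conclude for $\bfZ^*(\Delta)$ by the product/convolution argument you describe. The differences are in the endgame. For (i), the paper works directly with the L\'evy measure of $\bfZ$ (via the Brownian--gamma structure of the WVAG construction) and bounds $\log\|\bfz\|$ by $\|\bfz\|^2$ under the gamma L\'evy measure; your route through $\EE\|\bfZ(1)\|<\infty$ and \cite[Theorem~25.3]{sato99} is shorter and equally valid. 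For (ii), the paper takes one more decomposition step: it writes the univariate $V_k$ as a difference $G_+-G_-$ of independent gamma subordinators, so that $L_k\eqd G_+^*(\Delta)-G_-^*(\Delta)$, and then cites an explicit density formula from \cite{QDZ19} for each $G_\pm^*(\Delta)$. Your argument via the L\'evy measure of $L_k$ (absolutely continuous, infinite mass) or via self-decomposability is more structural and avoids the external reference; the self-decomposability route is the cleaner of the two, and your sketch that the radial density of the image L\'evy measure remains nonincreasing is the right check. The paper's approach buys an explicit representation (spelled out in its Remark~\ref{vgdecomp}) that could in principle be used for simulation; yours buys generality and is self-contained within \cite{sato99}. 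Your handling of the degenerate case $\Sigma_{kk}=0$ is appropriate --- the paper's proof tacitly assumes $\Sigma_{kk}>0$ as well, since otherwise the $G_+-G_-$ decomposition collapses.
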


Proposition~\ref{ouwvagprop} (i) implies the OU-WVAG process does in fact always have a stationary distribution. Proposition~\ref{ouwvagprop} (ii) implies that the likelihood function for the OU-WVAG process is the more typical case where the dominating measure is the Lebesgue measure. Also, the OU-WVAG process has infinite activity since its BDLP does.

\begin{remark}\label{cantextendrem}
    For the univariate OU-VG process, a formula for $\Psi_{Z^*(\Delta)}$ in terms of the dilogarithm function was found in \cite[Equation (18)]{Sab20} using the fact that the univariate VG process can be decomposed as a difference of two independent gamma processes (see Remark~\ref{vgdecomp}). But for the multivariate OU-VG process, and hence the OU-WVAG process, this method cannot be generalised as $VG^n(b,\bfmu,\Sigma)$, $n\geq2$, is not in the class of generalised gamma convolutions on $\RR^n$ when $\Sigma$ is invertible (see \cite[page 2220]{BKMS16}). This explains why we cannot find a result similar to Theorem \ref{wvagouchar} for the OU-WVAG process. \qed
\end{remark}

The likelihood function and exact simulation method in this section for the WVAG-OU process cannot be adapted to the OU-WVAG process, so the next section gives estimation and simulation methods that can be used for the OU-WVAG process, though they are also applicable much more generally.

\section{Estimation and Simulation for LDOUP}\label{sec3}

For a general $\bfX\sim \operatorname{\mathit{OU-\bfZ}}(\lambda)$, in this section, we give a likelihood function for the maximum likelihood estimation of the parameter vector $\bfvtheta$ assuming $\bfZ^*(\Delta)$ is absolutely continuous, and consider the simulation of the observations $\bfX(0),\bfX(t_1),\dots,\allowbreak\bfX(t_m)$ using an Euler scheme approximation of $\bfZ^*(\Delta)$. At the end we give moment formulas for $\bfZ^*(\Delta)$.

Both the estimation and simulation parts here follow the univariate results of \cite{vstt} closely but generalised to higher dimensions, and there is no surprise the results extend very straightforwardly.

\subsection{Likelihood Function and Estimation}\label{liksec}

We have the following likelihood function when $\bfZ^*(\Delta)$ is absolutely continuous.

\begin{proposition}\label{likprop}
    Let $\bfX\sim \operatorname{\mathit{OU-\bfZ}}(\lambda)$ be a stationary LDOUP. Let $\bfx=(\bfx_0,\dots,\bfx_m)$ be the vector of observations of $(\bfX(0),\dots,\bfX(t_m))$. Assume that $\bfX(0)$ is nondegenerate and $\bfZ^*(\Delta)$ is absolutely continuous. Then the likelihood function is $\bfvtheta\mapsto L(\bfvtheta,\bfx)$, where
    \begin{align}\label{likfn}
        L(\bfvtheta,\bfx) = e^{mn\lambda\Delta}f_{\bfX(0)}(\bfx_0)\prod_{k=1}^m f_{\bfZ^*(\Delta)}(e^{\lambda\Delta}\bfx_k-\bfx_{k-1}),
    \end{align}
    and $ f_{\bfX(0)}$ and $ f_{\bfZ^*(\Delta)}$ are the Lebesgue densities of $\bfX(0)$ and $\bfZ^*(\Delta)$, respectively.
\end{proposition}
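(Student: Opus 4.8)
The plan is to derive the likelihood function from the AR(1) structure of the discretely-observed LDOUP, obtaining the joint density of $(\bfX(0),\bfX(t_1),\dots,\bfX(t_m))$ via a change of variables. First I would recall from the introduction and from \eqref{ousoln} that the observations satisfy the recursion $\bfX(t_k) = e^{-\lambda\Delta}\bfX(t_{k-1}) + e^{-\lambda\Delta}\bfZ^*(\Delta)^{(k)}$ for $k=1,\dots,m$, where $\bfZ^*(\Delta)^{(k)} := \int_{t_{k-1}}^{t_k} e^{\lambda(t_k - s)}\,\rmd\bfZ(\lambda s)$ (up to the appropriate time scaling this is $\eqd \bfZ^*(\Delta)$ by stationarity and independent increments of $\bfZ$). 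The key structural fact, which I would cite from Remark~\ref{indepremark} (and Lemma~\ref{statsolnlem}), is that $\bfX(0), \bfZ^*(\Delta)^{(1)},\dots,\bfZ^*(\Delta)^{(m)}$ are mutually independent, with $\bfX(0)\eqd\bfY$ the stationary law and each $\bfZ^*(\Delta)^{(k)}\eqd\bfZ^*(\Delta)$.

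Next I would write the map $(\bfx_0,\bfz_1,\dots,\bfz_m)\mapsto(\bfx_0,\bfx_1,\dots,\bfx_m)$ defined by $\bfx_k = e^{-\lambda\Delta}(\bfx_{k-1}+\bfz_k)$, which is a bijection of $(\RR^n)^{m+1}$ onto itself; its inverse is $\bfz_k = e^{\lambda\Delta}\bfx_k - \bfx_{k-1}$. Its Jacobian matrix is block lower-triangular in the variables $(\bfx_0,\dots,\bfx_m)$, so the Jacobian determinant of the inverse transformation is $\prod_{k=1}^m \det(e^{\lambda\Delta} I_n) = e^{mn\lambda\Delta}$ — this is exactly the source of the prefactor $e^{mn\lambda\Delta}$ in \eqref{likfn}. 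Since $\bfX(0)$ has Lebesgue density $f_{\bfX(0)}$ (here the nondegeneracy hypothesis on $\bfX(0)$, together with self-decomposability, guarantees absolute continuity — self-decomposable laws are either degenerate or absolutely continuous, which I would cite, e.g., from Sato) and each $\bfZ^*(\Delta)^{(k)}$ has Lebesgue density $f_{\bfZ^*(\Delta)}$ by hypothesis, the joint density of $(\bfX(0),\bfZ^*(\Delta)^{(1)},\dots,\bfZ^*(\Delta)^{(m)})$ on $(\RR^n)^{m+1}$ is $f_{\bfX(0)}(\bfx_0)\prod_{k=1}^m f_{\bfZ^*(\Delta)}(\bfz_k)$ by independence, and applying the change-of-variables formula gives the joint density of the observations as $e^{mn\lambda\Delta}f_{\bfX(0)}(\bfx_0)\prod_{k=1}^m f_{\bfZ^*(\Delta)}(e^{\lambda\Delta}\bfx_k - \bfx_{k-1})$. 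Identifying this as the Radon--Nikodym derivative $\rmd\PPP_{\bfvtheta}/\rmd\LLL^{n(m+1)}$ yields the likelihood function, and since $\PPP_{\bfvtheta}\ll\LLL^{n(m+1)}$ for every admissible $\bfvtheta$ the function is well-defined as stated.

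I do not anticipate a serious obstacle here; the main point requiring care is justifying that $(\bfX(0),\bfZ^*(\Delta)^{(1)},\dots,\bfZ^*(\Delta)^{(m)})$ are independent and that the $\bfZ^*(\Delta)^{(k)}$ are identically distributed copies of $\bfZ^*(\Delta)$, which rests on the Markov property of the LDOUP, the stationarity assumption, and the independent-increments property of $\bfZ$ — all packaged in the earlier Markov/AR(1) discussion and in Remark~\ref{indepremark}. A secondary technical point is confirming that $f_{\bfX(0)}$ exists as a Lebesgue density: this follows because $\bfX(0)\eqd\bfY\sim SD^n$ is self-decomposable and nondegenerate, and nondegenerate self-decomposable distributions on $\RR^n$ are absolutely continuous. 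Everything else is a routine Jacobian computation, so the proof is essentially bookkeeping once the independence structure is in place.
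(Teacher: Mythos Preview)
Your proposal is correct and follows essentially the same approach as the paper: the AR(1) recursion from Remark~\ref{indepremark}, independence of $\bfX(0)$ and the innovations, the Jacobian $e^{mn\lambda\Delta}$ from the affine map $\bfz_k\mapsto e^{-\lambda\Delta}(\bfx_{k-1}+\bfz_k)$, and absolute continuity of nondegenerate self-decomposable laws (Sato, Theorem~27.13). The only organisational difference is that the paper routes the argument through a general Markov-process lemma (Lemma~\ref{markovllf}), applying the transformation theorem to each transition $\PPP_{\bfX(t_k)\mid\bfX(t_{k-1})=\bfx_{k-1}}$ separately, whereas you perform one global change of variables on the joint law of $(\bfX(0),\bfZ^*(\Delta)^{(1)},\dots,\bfZ^*(\Delta)^{(m)})$; the content is the same.
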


\begin{remark}
    If we fix $\bfX(0) = \bfx_0$ instead of assuming it has a nondegenerate stationary distribution $\bfY$, so that the LDOUP $\bfX$ is, in general, no longer stationary, then using similar arguments as in the proof,  $L(\bfvtheta,\bfx) = e^{mn\lambda\Delta}\prod_{k=1}^m f_{\bfZ^*(\Delta)}(e^{\lambda\Delta}\bfx_k\allowbreak-\bfx_{k-1})$. Note that if $\bfX(0) = \bfx_0$, then $\bfX$ remains stationary only in the degenerate case where $\bfY =\bfx_0$, $\bfX(t)=\bfx_0$, $\bfZ(t)= \bfx_0 t$, $t\geq 0$, almost surely.\qed
\end{remark}

Under the absolute continuity assumption, the above result gives the following general procedure for estimating the parameter vector $\bfvtheta$ of the LDOUP $\bfX$ using ML.
\begin{enumerate}[1.]
    \item Compute the likelihood function  $L(\bfvtheta,\bfx)$, where the Lebesgue densities $f_{\bfX(0)}$ and $f_{\bfZ^*(\Delta)}$ correspond respectively to the characteristic exponents
    \begin{align}
        \Psi_{\bfY}(\bftheta) ={}& \int_{0}^\infty \Psi_\bfZ (e^{-t}\bftheta)\,\rmd t, \label{cey} \\
        \Psi_{\bfZ^*(\Delta)}(\bftheta) ={}& \int_{0}^{\lambda\Delta} \Psi_\bfZ (e^{ t}\bftheta)\,\rmd t,\quad \bftheta\in\RR^n.  \label{cez}
    \end{align}
    \item Find the parameter vector $\bfvtheta$ which maximises $L(\bfvtheta,\bfx)$.
\end{enumerate}

The characteristic exponents \eqref{cey} and \eqref{cez} follow from Lemmas \ref{conversionlem} and \ref{cor1}, respectively, and in practice, the Lebesgue densities are obtained by applying Fourier inversion to the characteristic functions. Applying this procedure without first checking that the absolute continuity assumption holds may lead to biased estimates. We have shown this condition does not hold for the WVAG-OU process but does for the OU-WVAG process. Conditions for consistency and asymptotic normality of ML in the dependent observations case can be found in, for example, \cite{HM1,HM2}.

\begin{remark}\label{compburd}
    For the OU-WVAG process, when numerically computing $f_{\bfZ^*(\Delta)}$ in the likelihood function, taking the Fourier inversion of $\Phi_{\bfZ^*(\Delta)}$ requires evaluating $\Psi_{\bfZ^*(\Delta)}$ in \eqref{cez} using numerical integration. Thus, it would significantly reduce the computational burden if there is a closed-form formula for $\Psi_{\bfZ^*(\Delta)}$ like in Theorem \ref{wvagouchar} (iii), but we cannot go beyond \eqref{cez} as explained in Remark \ref{cantextendrem}.\qed
\end{remark}

\subsection{Simulation}

Let $\bfX\sim \operatorname{\mathit{OU-\bfZ}}(\lambda)$. We consider the simulation of the vector of observations
\begin{align}
    (\bfX(0),\bfX(t_1),\dots, \bfX(t_m)).\label{euler}
\end{align}
Set $\bfX(0) = \bfX_0 \eqd \bfY$. Suppose the stationary distribution $\bfY$ and the BDLP $\bfZ$ can be simulated.


Exact simulation of \eqref{euler} is possible using \eqref{obsx} provided that $\bfZ^*(\Delta)$ can be exactly simulated (for example, Remark \ref{simrem}). Otherwise, to simulate $\bfZ^*(\Delta)\eqd\int_0^{\lambda\Delta} e^s\,\rmd\bfZ(s)$, for a small step size $\wt\Delta>0$, we can use the stochastic integral approximation
\begin{align}\label{discretez}
    \bfZ^*_{\wt\Delta}(\Delta) :={}\sum_{l=1}^{\wt n} e^{l\wt\Delta} (\bfZ( (l+1)\wt\Delta )-\bfZ(l\wt\Delta)),
\end{align}
where $\wt n =   \lfloor  \lambda\Delta/\wt\Delta  \rfloor$. Let $(\bfZ(\wt\Delta)^{(k,l)})_{k=1,\dots,m,\,l=1,\dots,\wt n}$ be iid copies of $\bfZ(\wt\Delta)$ and $\bfZ^*_{\wt\Delta}(\Delta)^{(k)}:= \sum_{l=1}^{\wt n} e^{l\wt\Delta} \bfZ(\wt\Delta)^{(k,l)}$, so  that $(\bfZ^*_{\wt\Delta}(\Delta)^{(k)})_{k=1,\dots,m}$ are iid copies of $\bfZ^*_{\wt\Delta}(\Delta)$. Thus, \eqref{euler} can be simulated using the Euler scheme approximation $(\bfX_{\wt\Delta}(0), \bfX_{\wt\Delta}(\Delta),\allowbreak\dots,\bfX_{\wt\Delta}(m\Delta))$, where $\bfX_{\wt\Delta}(0):=\bfX_0$ and
\begin{align}\label{approxsim}
    \bfX_{\wt\Delta}(t_k) := {}e^{-\lambda\Delta} \left( \bfX_{\wt\Delta}(t_{k-1}) +\bfZ^*_{\wt\Delta}(\Delta)^{(k)}\right) ,\quad k =1,\dots, m.
\end{align}


The convergence of this approximate simulation scheme is essentially proven by \cite[Theorem 3.1]{JP98} (though that result assumes $\bfX_0$ is deterministic, whereas here it is random) with the rate of convergence in the univariate case given in \cite[Theorem 6.1]{JP98} and \cite{jac04}. We include a simple and direct proof of the following convergence result.

\begin{proposition}\label{zconvglem1}
    Let $\Delta>0$ and $\bfZ\sim L^n$.
    \begin{enumerate}
        \item[(i)] As $\wt\Delta\to 0$, $\bfZ^*_{\wt\Delta}(\Delta)\stp \bfZ^*(\Delta)$.
        \item[(ii)] Let $\bfX\sim \operatorname{\mathit{OU-\bfZ}}(\lambda)$ be a stationary LDOUP. As $\wt\Delta\to 0$, $(\bfX_{\wt\Delta}(0), \bfX_{\wt\Delta}(t_1),\allowbreak\dots,\bfX_{\wt\Delta}(t_m))\std(\bfX(0), \bfX(t_1),\dots, \bfX(t_m)) $.
    \end{enumerate}
\end{proposition}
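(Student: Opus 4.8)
The plan is to prove (i) by showing $L^2$ convergence of the approximating sums, which implies convergence in probability, and then derive (ii) from (i) via the continuous mapping theorem applied to the affine recursion \eqref{approxsim}.

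For part (i), I would first reduce to the one-dimensional case componentwise (or work directly with inner products against a fixed $\bftheta$), since convergence in probability of a random vector is equivalent to convergence in probability of each coordinate. Fix a coordinate and write $Z$ for the corresponding one-dimensional L\'evy process. A technical wrinkle is that $Z$ need not have finite variance, so a direct $L^2$ argument on $Z$ itself fails in general; the standard fix is to use the L\'evy--It\^o decomposition to split $Z = Z^{(1)} + Z^{(2)}$, where $Z^{(1)}$ collects the jumps larger than one (a compound Poisson process, handled pathwise) and $Z^{(2)}$ is the remaining martingale-plus-drift part, which does have finite second moments. For $Z^{(2)}$, the difference $Z^{*(2)}_{\wt\Delta}(\Delta) - Z^{*(2)}(\Delta)$ is a sum of increments weighted by a step function approximating $e^s$; expanding the $L^2$ norm using independence and stationarity of increments, the mean-square error is controlled by $\int_0^{\lambda\Delta}(e^s - e^{\wt\Delta\lfloor s/\wt\Delta\rfloor})^2\,\rmd s$ times a finite constant (the second moment rate of $Z^{(2)}$), which $\to 0$ as $\wt\Delta\to 0$ by dominated convergence. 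For the compound-Poisson part $Z^{(1)}$, on the event that $Z^{(1)}$ has no jump in $[0,\lambda\Delta]$ both $Z^{*(1)}_{\wt\Delta}(\Delta)$ and $Z^{*(1)}(\Delta)$ vanish, and on the complementary event (which has probability bounded but can be made to contribute little) the Riemann-sum approximation of $\int_0^{\lambda\Delta}e^s\,\rmd Z^{(1)}(s)$ converges almost surely because a finite-activity process has only finitely many jumps and the step function $e^{\wt\Delta\lfloor s/\wt\Delta\rfloor}\to e^s$ uniformly on compacts; so this part converges almost surely, hence in probability. Combining, $Z^*_{\wt\Delta}(\Delta)\stp Z^*(\Delta)$.

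For part (ii), unfold the recursion: from \eqref{approxsim}, $\bfX_{\wt\Delta}(t_k) = e^{-k\lambda\Delta}\bfX_0 + \sum_{j=1}^k e^{-(k-j+1)\lambda\Delta}\bfZ^*_{\wt\Delta}(\Delta)^{(j)}$, and similarly $\bfX(t_k) = e^{-k\lambda\Delta}\bfX_0 + \sum_{j=1}^k e^{-(k-j+1)\lambda\Delta}\bfZ^*(\Delta)^{(j)}$, where the $\bfZ^*(\Delta)^{(j)}$ are the iid innovations from \eqref{obsx}. The key point is that the approximation can be coupled so that $\bfZ^*_{\wt\Delta}(\Delta)^{(j)}$ is built from the same underlying increments as $\bfZ^*(\Delta)^{(j)}$ for each $j$, with the $m$ blocks mutually independent; then part (i), applied blockwise, gives $(\bfZ^*_{\wt\Delta}(\Delta)^{(1)},\dots,\bfZ^*_{\wt\Delta}(\Delta)^{(m)})\stp(\bfZ^*(\Delta)^{(1)},\dots,\bfZ^*(\Delta)^{(m)})$ jointly (convergence in probability of each independent block upgrades to joint convergence in probability of the vector). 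Since $(\bfX_{\wt\Delta}(0),\dots,\bfX_{\wt\Delta}(t_m))$ is a fixed continuous (indeed linear) function of $\bfX_0$ and this innovation vector, the continuous mapping theorem yields convergence in probability, hence in distribution, to $(\bfX(0),\dots,\bfX(t_m))$.

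The main obstacle is part (i) in the infinite-variance / infinite-activity regime: one cannot simply bound second moments of $\bfZ$, so the L\'evy--It\^o splitting into a large-jump compound Poisson piece (treated by an almost-sure Riemann-sum argument exploiting finitely many jumps) and a square-integrable remainder (treated by the mean-square step-function estimate) is essential, and care is needed to patch the two estimates together cleanly. Everything else — the componentwise reduction, unfolding the affine recursion, and the continuous mapping step — is routine.
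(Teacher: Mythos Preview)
Your proposal is correct, but for part (i) you take a genuinely different route from the paper. The paper does not perform a L\'evy--It\^o decomposition at all; instead it observes that the step function $H_{\wt\Delta}(s)$ converges to $e^s$ in $L^2([0,\lambda\Delta],\rmd s)$ (by dominated convergence with bound $e^{\lambda\Delta}$), and then invokes the very \emph{definition} of the stochastic integral with respect to a L\'evy process from Sato--Yamazato \cite[Theorem 2.1]{saya}, which is constructed precisely so that $L^2$-convergence of deterministic integrands forces convergence in probability of the integrals. A small correction term $\bfZ(\wt\Delta)\stp\bfnull$ handles the fact that the Riemann sum \eqref{discretez} starts at $l=1$ rather than $l=0$. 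In effect, the infinite-variance obstacle you identify as the main difficulty is absorbed into the black box of the stochastic-integral construction: the L\'evy--It\^o splitting you propose is exactly what happens \emph{inside} that construction, so your argument is a self-contained unpacking of what the paper cites. Your approach buys independence from the cited reference and is more transparent about where finite-variance and compound-Poisson pieces are handled; the paper's approach is shorter and avoids re-deriving standard machinery.

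For part (ii) the two arguments are essentially the same (joint convergence of the innovation vector followed by the continuous mapping theorem through the affine recursion), though the paper passes through convergence in distribution via the L\'evy continuity theorem using independence of the blocks, whereas you upgrade to joint convergence in probability via the explicit blockwise coupling; either works.
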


\begin{remark}
    Throughout this section it is assumed that the observations are equally spaced with sampling interval $\Delta>0$. For possibly unequal sampling intervals $\Delta_{k} = t_{k}-t_{k-1}$, $k=1,\dots,m$, the above results can be extended in the obvious way, replacing $\Delta$ with $\Delta_{k}$ where appropriate. \qed
\end{remark}

\subsection{Moments}

Next, we give the moments of the innovation term $\bfZ^*(\Delta)$ in terms of the moments of $\bfZ$, and the autocorrelation of the stationary LDOUP $\bfX$. This will be useful in Section \ref{sec5}. For a random vector $\bfU=(U_1,\dots,U_n)$, let $m_1(\bfU):=\EE[\bfU]$, and the $k$th central moment of $\bfU$ be $m_k(\bfU):=(\EE[(U_1-\EE[U_1])^k],\dots,\EE[(U_n\allowbreak-\EE[U_n])^k])$, $k\geq2$, and also let $\gamma_k:=(e^{k\lambda\Delta}-1)/k$. For a stationary process $\bfX=(X_1,\dots,X_n)$, define its autocorrelation function at lag $t\geq0$ to be $\rho_\bfX(t):=(\myCorr (X_k(0),X_l(t))\in\RR^{n\times n}$, $t\ge0$.

\begin{lemma}\label{momprop}
    Let $\Delta>0$ and $\bfZ=(Z_1,\dots,Z_n)\sim L^n$. 
    
    \begin{enumerate}
        \item[(i)] The moments of $\bfZ^*(\Delta)=(Z^*_1(\Delta),\dots,Z^*_n(\Delta))$ include
        \begin{align}
            m_1(\bfZ^*(\Delta))&=\gamma_1m_1(\bfZ(1)),\label{mom1}\\
            m_2(\bfZ^*(\Delta))&=\gamma_2m_2(\bfZ(1)),\nonumber\\
            m_3(\bfZ^*(\Delta))&=\gamma_3m_3(\bfZ(1)),\nonumber\\
            m_4(\bfZ^*(\Delta))&=\gamma_4m_4(\bfZ(1))-3\gamma_2m_2(\bfZ(1))^2,\nonumber\\
            \myCov(Z^*_k(\Delta),Z^*_l(\Delta))&=\gamma_2\myCov(Z_k(1),Z_l(1)),\quad k\neq l,\label{mom2}
        \end{align}
        provided that the moments of $\bfZ$ on the RHS are finite.
        
        \item[(ii)] Let $\bfX\sim \operatorname{\mathit{OU-\bfZ}}(\lambda)$ be a stationary LDOUP and suppose its initial value $\bfX_0$ has autocorrelation matrix $P$. If $\bfX(t)$, $t\geq0$, has finite second moments, then the autocorrelation function of $\bfX$ is $\rho_\bfX(t)=P\exp(-\lambda t)$, $t\ge0$.
    \end{enumerate}
\end{lemma}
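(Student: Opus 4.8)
The plan is to prove part (i) by computing moments of $\bfZ^*(\Delta)=\int_0^{\lambda\Delta}e^s\,\rmd\bfZ(s)$ via its characteristic exponent, and then deduce part (ii) from the AR(1) structure. For (i), I would start from the identity \eqref{cez}, namely $\Psi_{\bfZ^*(\Delta)}(\bftheta)=\int_0^{\lambda\Delta}\Psi_\bfZ(e^t\bftheta)\,\rmd t$, which holds by Lemma \ref{cor1}. Differentiating $\Phi_{\bfZ^*(\Delta)}(\bftheta)=\exp(\Psi_{\bfZ^*(\Delta)}(\bftheta))$ at $\bftheta=\bfnull$ relates moments of $\bfZ^*(\Delta)$ to derivatives of $\Psi_{\bfZ^*(\Delta)}$, which in turn are $\int_0^{\lambda\Delta}e^{kt}\,\rmd t=\gamma_k$ times the corresponding derivative of $\Psi_\bfZ$ (the $k$-th order derivative brings down a factor $e^{kt}$ from the chain rule applied to $\bftheta\mapsto e^t\bftheta$). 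Concretely, the cumulants of $\bfZ^*(\Delta)$ are $\gamma_k$ times the cumulants of $\bfZ(1)$; converting cumulants to central moments gives \eqref{mom1}--\eqref{mom2}. The first three central moments coincide with the first three cumulants (up to the mean shift), which is why $m_k(\bfZ^*(\Delta))=\gamma_k m_k(\bfZ(1))$ for $k=1,2,3$ and for the cross-covariance; the fourth central moment picks up the extra term $-3\gamma_2 m_2(\bfZ(1))^2$ from the standard relation $m_4=\kappa_4+3\kappa_2^2$ applied to both sides, since $\gamma_4\kappa_4+3(\gamma_2\kappa_2)^2$ must be re-expressed as $\gamma_4 m_4-3\gamma_2 m_2^2$ after substituting $\kappa_4=m_4-3m_2^2$ and $\kappa_2=m_2$. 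I would verify the finiteness of the relevant moments of $\bfZ^*(\Delta)$ from finiteness for $\bfZ$ by noting the integrand $e^s$ is bounded on $[0,\lambda\Delta]$, so moment conditions transfer directly (e.g.\ via \cite[Theorem 25.3]{sato99} on moments of infinitely divisible laws, or by the boundedness of the integrand in the stochastic integral).

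For part (ii), I would use the stationary AR(1) representation $\bfX(t_k)=e^{-\lambda\Delta}\bfX(t_{k-1})+e^{-\lambda\Delta}\bfZ^*(\Delta)^{(k)}$, or more directly the continuous-time solution \eqref{ousoln}: for $t\ge0$, $\bfX(t)=e^{-\lambda t}\bfX(0)+e^{-\lambda t}\int_0^t e^{\lambda s}\,\rmd\bfZ(\lambda s)$, where the integral term is independent of $\bfX(0)$. Taking covariances component-wise, $\myCov(X_k(0),X_l(t))=e^{-\lambda t}\myCov(X_k(0),X_l(0))$ since the stochastic integral increment over $[0,t]$ is independent of $\bfX(0)$; moreover stationarity forces $\myVar(X_k(t))=\myVar(X_k(0))$ for all $t$, so dividing through by the (constant in $t$) standard deviations yields $\rho_\bfX(t)=P\exp(-\lambda t)$ entrywise. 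I would need the finite-second-moment hypothesis precisely to make these covariances well-defined; stationarity of $\bfX$ (guaranteed in the OU-$\bfZ$ setup of model \ref{model1}) ensures $P$ is genuinely the autocorrelation matrix of every $\bfX(t)$.

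The main obstacle I expect is purely bookkeeping: carefully justifying the differentiation under the integral sign in \eqref{cez} and the passage from cumulants to central moments, especially getting the fourth-order term right. There is no deep difficulty — the boundedness of $e^s$ on the compact interval $[0,\lambda\Delta]$ makes all the dominated-convergence arguments routine — but the algebra converting $\log\Phi$-derivatives into the stated central moments must be done with care to land exactly on the $-3\gamma_2 m_2(\bfZ(1))^2$ correction term. An alternative, perhaps cleaner, route for (i) is to work directly with the stochastic integral: since $\int_0^{\lambda\Delta}e^s\,\rmd\bfZ(s)$ is a limit of sums $\sum e^{s_j}(\bfZ(s_{j+1})-\bfZ(s_j))$ of independent (not identically distributed) increments of a Lévy process, one can use additivity of cumulants over independent summands and then pass to the limit, with $\sum e^{ks_j}(s_{j+1}-s_j)\to\int_0^{\lambda\Delta}e^{ks}\,\rmd s=\gamma_k$; I would pick whichever presentation is shortest in the write-up.
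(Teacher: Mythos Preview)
Your proposal is correct and follows essentially the same approach as the paper: for part (i) the paper differentiates the characteristic function via \eqref{cez}, justifies differentiation under the integral sign by dominated convergence (using the L\'evy--Khintchine representation to bound derivatives), and remarks that the higher-order cases follow similarly---your cumulant bookkeeping makes explicit what the paper leaves as ``a similar method.'' For part (ii) the paper simply cites \cite[Proposition 2.6]{mas04}, whereas you supply the short direct argument from \eqref{ousoln} and independence of the integral increment from $\bfX(0)$; both are fine and amount to the same computation.
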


\begin{remark} \label{acfrem} In Lemma \ref{momprop} (ii), every marginal component $X_k$ has the same autocorrelation function $t\mapsto e^{-\lambda t}$. It is implicitly assumed that $\bfX_0$ is nondegenerate with finite second moments. If, further, $\bfZ$ has finite second moments, then so does $\bfZ^*(\Delta)$ and $\bfX(t)$ by Lemma \ref{momprop} (i) and \eqref{ousoln}, respectively.\qed
\end{remark}

We apply the moment formulas to the WVAG-OU and OU-WVAG processes.

\begin{corollary}\label{wvagoumom}
    If $\bfX\sim WVAG^n\text{-}OU(\lambda,a,\bfalpha,\Sigma,\bfeta)$, then the moments of $\bfZ^*(\Delta)=(Z^*_1(\Delta),\dots,\allowbreak Z^*_n(\Delta))$ include
    \begin{align*}
        m_1(Z^*_k(\Delta))&=\gamma_1\eta_k,\\
        m_2(Z^*_k(\Delta))&=2\gamma_2\Sigma_{kk},\\
        m_3(Z^*_k(\Delta))&=0,\\
        m_4(Z^*_k(\Delta))&=12\Sigma_{kk}^2(\gamma_4(\alpha_k+1)  -\gamma_2),\quad k=1,\dots,n,\\
        \myCov(Z^*_k(\Delta),Z^*_l(\Delta))&=2\gamma_2(\alpha_k\wedge\alpha_l )\Sigma_{kl},\quad k\neq l.
    \end{align*}
\end{corollary}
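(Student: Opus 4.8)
The plan is to read everything off Lemma~\ref{momprop}(i), which already expresses $m_1,\dots,m_4$ and the pairwise covariances of $\bfZ^*(\Delta)$ as fixed multiples (depending only on $\gamma_1,\gamma_2,\gamma_3,\gamma_4$) of the corresponding quantities for $\bfZ(1)$. Thus the corollary reduces entirely to computing, for the BDLP $\bfZ$ of the WVAG-OU process described in Theorem~\ref{wvagouchar}, the scalars $m_j(Z_k(1))$ for $j=1,2,3,4$ and $\myCov(Z_k(1),Z_l(1))$ for $k\neq l$, and then substituting into \eqref{mom1}--\eqref{mom2}.

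For the marginal moments I would use the compound Poisson representation in Theorem~\ref{wvagouchar}(ii): by Remarks~\ref{marginallem} and \ref{marglik}, together with the identity $a+\beta_k=1/\alpha_k$, the $k$th marginal BDLP is $Z_k\sim CP^1(2/\alpha_k,\PPP_k,\eta_k)$, where $\PPP_k$ is the law of $VG^1(1,0,\alpha_k\Sigma_{kk})$ at time $1$. Hence the cumulants of $Z_k(1)$ are $\kappa_1(Z_k(1))=\eta_k$ and $\kappa_j(Z_k(1))=(2/\alpha_k)\EE[J^j]$ for $j\ge 2$, with $J\sim VG^1(1,0,\alpha_k\Sigma_{kk})$. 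Writing $J\eqd\sqrt{\alpha_k\Sigma_{kk}}\,\sqrt{G(1)}\,N$, where $G\sim\Gamma_S(1,1)$ and $N$ is an independent standard normal (recall $\bfmu=\bfnull$ for the WVAG-OU process), $J$ is symmetric, so $\EE[J]=\EE[J^3]=0$, $\EE[J^2]=\alpha_k\Sigma_{kk}$ and $\EE[J^4]=\EE[G(1)^2]\,\EE[N^4]\,(\alpha_k\Sigma_{kk})^2=6\alpha_k^2\Sigma_{kk}^2$, using $\EE[G(1)^2]=2$ and $\EE[N^4]=3$. This gives $\kappa_2(Z_k(1))=2\Sigma_{kk}$, $\kappa_3(Z_k(1))=0$, $\kappa_4(Z_k(1))=12\alpha_k\Sigma_{kk}^2$; converting to central moments via $m_2=\kappa_2$, $m_3=\kappa_3$, $m_4=\kappa_4+3\kappa_2^2$ yields $m_1(Z_k(1))=\eta_k$, $m_2(Z_k(1))=2\Sigma_{kk}$, $m_3(Z_k(1))=0$, $m_4(Z_k(1))=12\Sigma_{kk}^2(\alpha_k+1)$. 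Equivalently, the same numbers drop out of a Taylor expansion at the origin of the single-coordinate restriction of \eqref{wvagouz}, which collapses to $\rmi\eta_k\theta_k-\Sigma_{kk}\theta_k^2/(1+\tfrac12\alpha_k\Sigma_{kk}\theta_k^2)$.

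For the cross-covariance, note that in \eqref{wvagouz} every term other than $-a\|\bftheta\|^2_{\bfalpha\tr\Sigma}/(1+\tfrac12\|\bftheta\|^2_{\bfalpha\tr\Sigma})$ depends on a single coordinate, so $\myCov(Z_k(1),Z_l(1))=-\partial_{\theta_k}\partial_{\theta_l}\Psi_{\bfZ}(\bfnull)$ picks up only the quadratic part $-a\|\bftheta\|^2_{\bfalpha\tr\Sigma}$ of that term, giving $\myCov(Z_k(1),Z_l(1))=2a(\bfalpha\tr\Sigma)_{kl}=2a(\alpha_k\wedge\alpha_l)\Sigma_{kl}$ for $k\neq l$ (the same value is obtained by noting that among the jumps in \eqref{zstarcprep} only the common $VG^n$ jumps, arriving at rate $2a$, couple two distinct coordinates). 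Feeding these five quantities into \eqref{mom1}--\eqref{mom2} then produces the listed formulas, the $m_4$ line after simplifying $\gamma_4 m_4(Z_k(1))-3\gamma_2 m_2(Z_k(1))^2=12\Sigma_{kk}^2\gamma_4(\alpha_k+1)-12\gamma_2\Sigma_{kk}^2$ and the covariance line being $\myCov(Z^*_k(\Delta),Z^*_l(\Delta))=\gamma_2\,\myCov(Z_k(1),Z_l(1))$. There is no serious obstacle—the argument is essentially bookkeeping—but the one point needing care is keeping the cumulant-to-central-moment conversion of Lemma~\ref{momprop}(i) consistent with the fact that the gamma variable mixing $VG^1(1,\cdot,\cdot)$ has both mean and variance equal to $1$ (so $\EE[G(1)^2]=2$), since that is exactly what fixes the constant in the fourth central moment.
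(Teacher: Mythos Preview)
Your proof is correct and follows essentially the same route as the paper: reduce via Lemma~\ref{momprop}(i) to the moments of $\bfZ(1)$, then extract those from the compound Poisson structure of Theorem~\ref{wvagouchar}(ii). The only cosmetic difference is that the paper cites external references for the VG moments and writes the cross-covariance as $b\,\EE[J_kJ_l]=2a\,\EE[V_kV_l]$, whereas you compute the VG moments by hand and read the covariance off the quadratic part of \eqref{wvagouz}; both routes give $\myCov(Z_k(1),Z_l(1))=2a(\alpha_k\wedge\alpha_l)\Sigma_{kl}$.
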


\begin{remark}\label{ouwvagmom}
    If $\bfX\sim OU\text{-}WVAG^n(\lambda,a,\bfalpha,\bfmu,\Sigma,\bfeta)$, the formulas for $\bfZ^*(\Delta)$ follow immediately from Lemma \ref{momprop} (i) and the moments of the WVAG process (see \cite[Remark 4 and Appdendix A.1]{MiSz17}). \qed
\end{remark}

\begin{remark}\label{covrem}
    For all $t\geq 0$ and $k\neq l$, if $\bfX\sim WVAG^n\text{-}OU(\lambda,a,\bfalpha,\Sigma,\bfeta)$, then $\myCov(X_k(t),\allowbreak X_l(t))=C(0,0)$, where  $C(\mu_k,\mu_l) := a((\alpha_k\wedge \alpha_l)\Sigma_{kl}+\alpha_k\alpha_l\mu_k\mu_l)$,  by \cite[Remark 4]{MiSz17}, and if $\bfX\sim OU\text{-}WVAG^n\allowbreak(\lambda, a,\bfalpha,\bfmu,\Sigma,\bfeta)$, then $\myCov(X_k(t),X_l(t))= C(\mu_k,\mu_l)/2$ by \eqref{obsx} and \eqref{mom2}. \qed
\end{remark}

\section{Numerical Results}\label{sec5}

In this section, we apply the previous results to simulate and estimate the parameters of a WVAG-OU process and an OU-WVAG process with $n=2$ using maximum likelihood.

Throughout, we set
\begin{gather*}
    \lambda = 0.5,\quad a =1, \quad \bfalpha =(\alpha_1,\alpha_2)= (0.9,0.5),\quad \bfmu =(\mu_1,\mu_2) =(0.15,-0.06),\\
    \quad \Sigma = \begin{pmatrix}\Sigma_{11}&\Sigma_{12}\\\Sigma_{12}&\Sigma_{22} \end{pmatrix}= \begin{pmatrix}0.18&0.09\\0.09&0.08 \end{pmatrix},\quad \bfeta =(\eta_1,\eta_2) =(-0.06,0).
\end{gather*}

With these as the true parameters, consider the process $\bfX\sim WVAG^2\text{-}\allowbreak OU(\lambda,a,\bfalpha,\Sigma,\bfeta)$ or $\bfX\sim OU\text{-}WVAG^2(\lambda,a,\bfalpha,\bfmu,\Sigma,\bfeta)$. Note that in the former case, there is no $\bfmu$ parameter since the stationary distribution is a WVAG distribution with $\bfmu=\bfnull$, not $\bfmu=(0.15,-0.06)$.

\begin{sloppypar}
    The  R code for this section can be found at \url{https://github.com/klu5893/LDOUP-Calibration}
\end{sloppypar}

\subsection{Simulations}\label{simresultsec}

\paragraph{WVAG-OU process} 
Let $\bfX\sim WVAG^2\text{-}OU(\lambda,a,\bfalpha,\Sigma,\bfeta)$. We make an exact simulation of $\bfX$ as explained in Remark \ref{simrem}.

Figure \ref{plot1} shows this simulation with $\Delta = 1/100$ for $t\in[0,1000]$, and the same sample path for $t\in[0,10]$. Zooming in, we see that if the Poisson processes in the representation of $\bfZ^*(\Delta)$ in \eqref{zstarcprep} do not jump by time $\lambda\Delta$, then the sample path is deterministic as explained in Remark \ref{exactrecover}.

\begin{figure}[!h]
    \begin{center}
        \includegraphics[scale=.75]{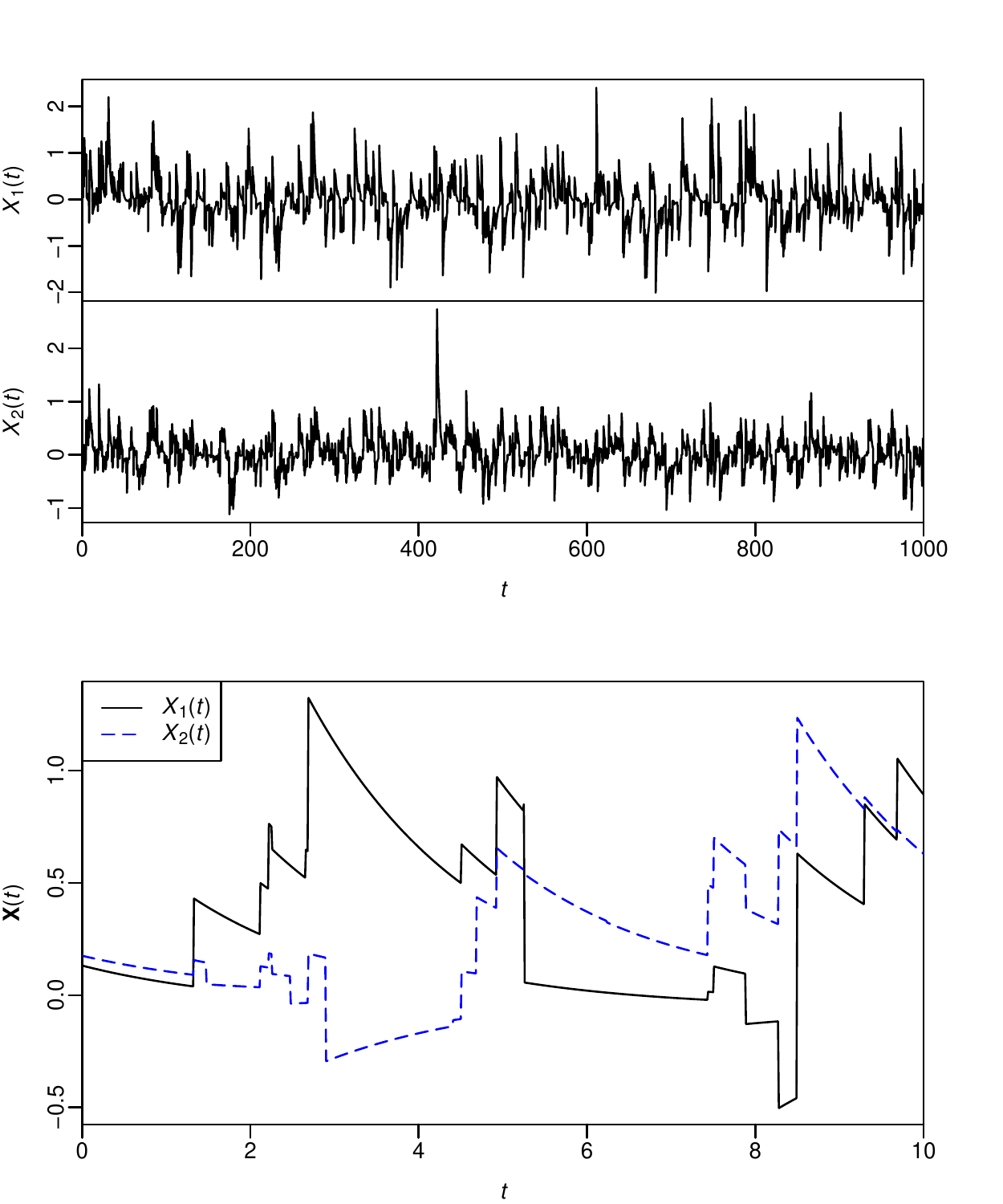}
        \caption{Simulated sample path of $\bfX\sim WVAG^2\text{-}OU(\lambda,a,\bfalpha,\Sigma,\bfeta)$ on $t\in[0,1000]$ (top) and the same sample path zoomed in to $t\in[0,10]$ (bottom).}
        \label{plot1}
    \end{center}
\end{figure}

\paragraph{OU-WVAG process}
Now let $\bfX\sim OU\text{-}WVAG^2(\lambda,a,\bfalpha,\bfmu,\Sigma,\bfeta)$. We simulate $\bfX$ using the approximation \eqref{approxsim} since we do not have an explicit representation for  $\bfZ^*(\Delta)$. Here, $\bfX(0)\eqd \bfY$ is the stationary distribution characterised by \eqref{cey}, which is simulated by numerically using Fourier inversion (see Section~\ref{estresultsec} below for details) to get the corresponding Lebesgue density and then sampling from that. Also, using \eqref{approxsim} requires simulating the WVAG distribution $\bfZ(\wt\Delta)$, which is done as explained in Remark \ref{simrem}.

Figure \ref{plot2} shows this simulation with $\Delta = 1/100$, $\wt\Delta = 1/10000$ for $t\in[0,1000]$, and the same sample path for $t\in[0,10]$. This is an infinite activity process, and while Figures \ref{plot1} and \ref{plot2} look somewhat similar, there are infinitely many extremely small jumps in the latter case.

\begin{figure}[!h]
    \begin{center}
        \includegraphics[scale=.75]{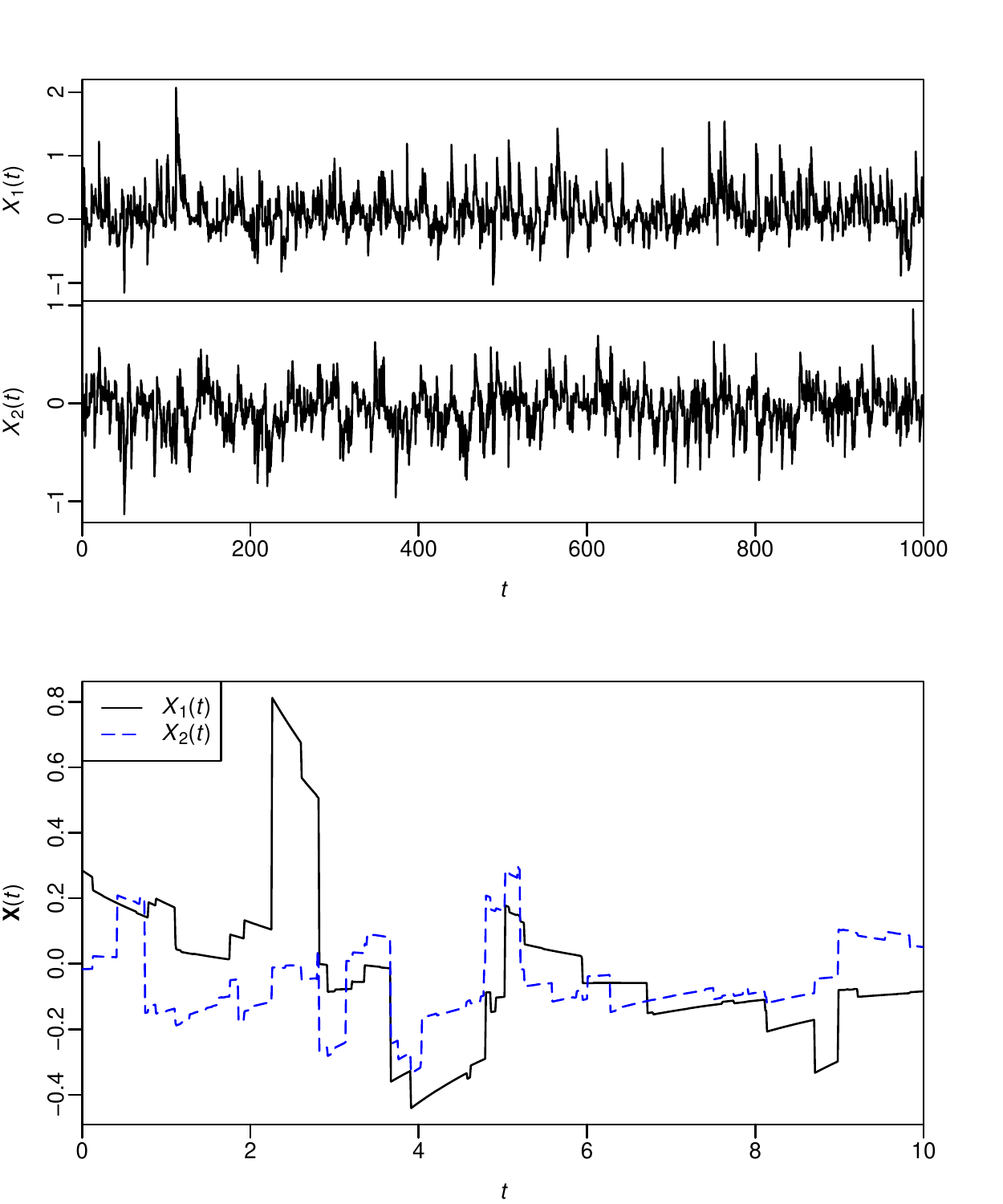}
        \caption{Simulated sample path of $\bfX\sim OU\text{-}WVAG^2(\lambda,a,\bfalpha,\bfmu,\Sigma,\bfeta)$ on $t\in[0,1000]$ (top) and the same sample path zoomed in to $t\in[0,10]$ (bottom).}
        \label{plot2}
    \end{center}
\end{figure}

These sample paths can be compared to those of Gamma-OU and OU-Gamma processes. For instance, see \cite[Figure 1]{QDZ19}. In fact, the marginal components $X_1,X_2$ can be obtained from the difference of independent Gamma-OU or OU-Gamma processes, though the joint process $\bfX$ cannot be.

\paragraph{Simulation checks} Now fix the sampling interval $\Delta=1$ and let $m=1000$, and we make 10000 Monte Carlo simulations of the observations $\bfX(0),\bfX(t_1),\dots,\bfX(t_m)$. Here, we consider three checks to show the simulation methods above for the WVAG-OU and OU-WVAG processes are correct. The first two relate to the distribution of $\bfX(t)$ for a fixed time $t$, which is equal to the stationary distribution, and the last relates to the autocorrelation of $\bfX$ for a fixed sample path.

Firstly, we check that the marginal distributions of $X_1(1000)$ and $X_2(1000)$ are correct. Figure \ref{histsd} show the histograms of $X_1(1000)$ and $X_2(1000)$ with a sample size of 10000 as well as the corresponding theoretical Lebesgue density. To obtain this density, note that for the WVAG-OU process, $X_k(1000)\sim VG^1(1/\alpha_k,0,\Sigma_{kk},\eta_k)$, $k=1,2$, by Remark \ref{marglik}, while for the OU-WVAG process, it is determined by \eqref{cey} and computed numerically using Fourier inversion (see Section~\ref{estresultsec} below for details). Visually, the histogram and theoretical densities closely match in all cases. We also perform one-sample Kolmogorov-Smirnov tests with sample sizes 100, 1000 and 10000. This tests whether the samples of $X_1(1000)$ and $X_2(1000)$ come from their respective theoretical probability distribution. For the WVAG-OU process, we would expect that the p-value would not be significant over all sample sizes because the simulation method is exact, whereas for the OU-WVAG process, the p-value would not be significant for reasonably large sample sizes but would become significant at some sufficiently large sample size because the simulation method is approximate and any small difference will eventually become detectable. This is consistent with the results given in Table \ref{kstest} where we see that all the p-values are not significant except for the OU-WVAG process when the sample size is 10000. In that case, the p-value is close to being significant at the 10\% level for $X_1(1000)$ and significant at the 5\% level for $X_2(1000)$. Therefore, we conclude that the marginal distributions are correct for the WVAG-OU process and highly accurate for the OU-WVAG process.

\begin{figure}[!h]
    \begin{center}
        \includegraphics[scale=.75]{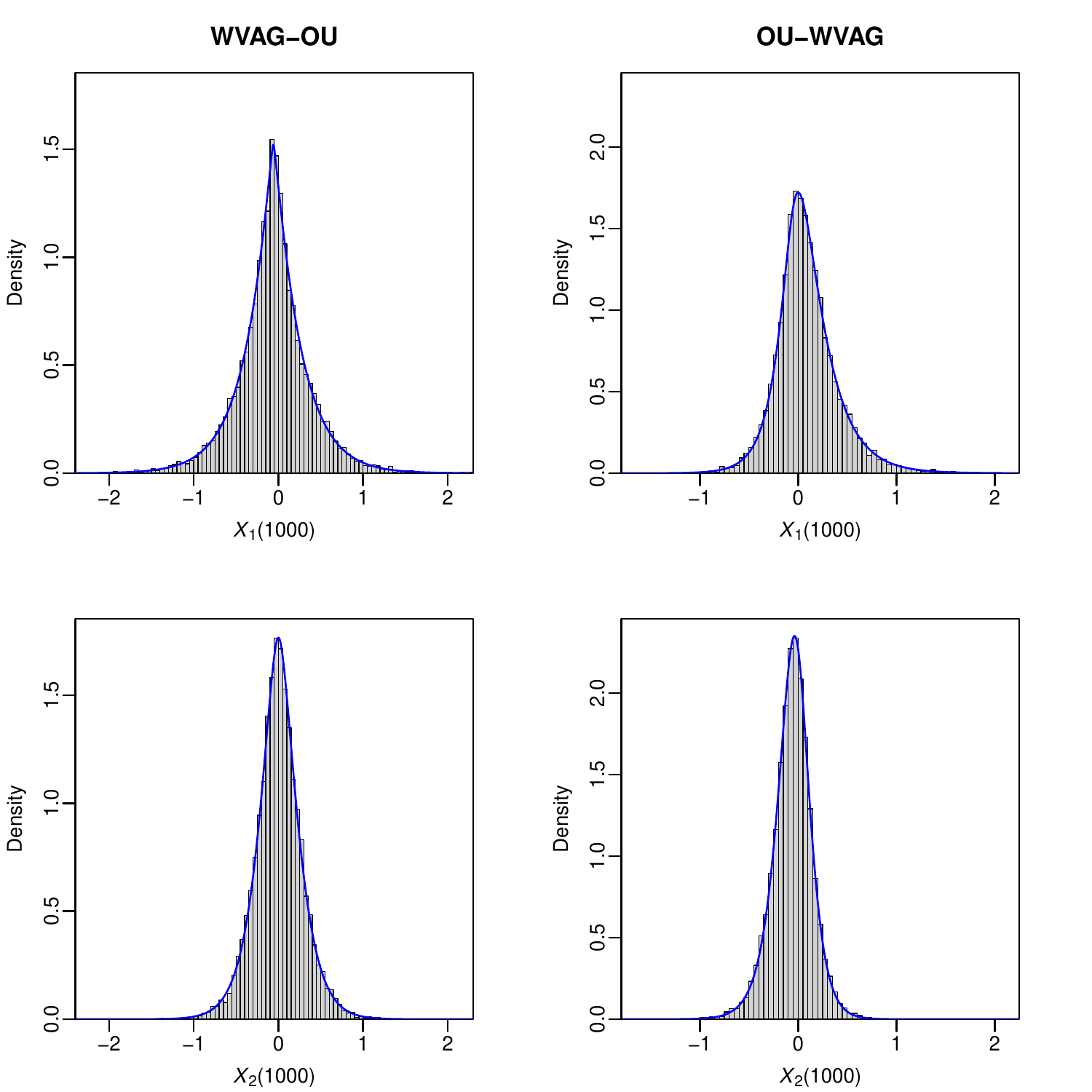}
        \caption{Histogram of $X_1(1000)$ (top) and $X_2(1000)$ (bottom) with a sample size of 10000, and corresponding theoretical Lebesgue density (blue line), where $\bfX\sim WVAG^2\text{-}OU(\lambda,a,\bfalpha,\Sigma,\bfeta)$ (left) or $\bfX\sim OU\text{-}WVAG^2(\lambda,a,\bfalpha,\bfmu,\Sigma,\bfeta)$ (right).}
        \label{histsd}
    \end{center}
\end{figure}

\begin{table}[h]
    \begin{center}
        \begin{tabular}{ccccc}
            \hline
            & \multicolumn{2}{c}{\bf WVAG-OU} & \multicolumn{2}{c}{\bf OU-WVAG}  \\
            {\bf Sample size} & $X_1(1000)$ & $X_2(1000)$ & $X_1(1000)$ &  $X_2(1000)$\\ \hline
            100          & 0.3869      & 0.9292 & 0.2637 & 0.9886\\
            1000         & 0.8254    & 0.4493 & 0.2483  &  0.1673\\
            10000       & 0.6581    & 0.5858 & 0.1149 &  0.0111\\ \hline
        \end{tabular}
    \end{center}
    \caption{P-values of one-sample Kolmogorov-Smirnov tests for $X_1(1000)$ and $X_2(1000)$, where $\bfX\sim WVAG^2\text{-}OU(\lambda,a,\bfalpha,\Sigma,\bfeta)$ or $\bfX\sim OU\text{-}WVAG^2(\lambda,a,\bfalpha,\bfmu,\Sigma,\bfeta)$, and the sample sizes are 100, 1000 and 10000.}
    \label{kstest}
\end{table}

Secondly, we check that the covariance $\myCov(X_1(1000),X_2(1000))$ is correct. Table \ref{covtable} gives the theoretical covariance (see Remark \ref{covrem}), the sample covariance with a sample size of 10000, and the 95\% confidence interval computed using bootstrap with 10000 replicates. For both the WVAG-OU and OU-WVAG processes, the confidence interval covers the theoretical covariance.

\begin{table}[h]
    \begin{center}
        \begin{tabular}{ccccc}
            \hline
            & {\bf WVAG-OU} &{\bf OU-WVAG}  \\ \hline
            Theoretical covariance          &0.0450              & 0.0205\\
            Sample covariance           & 0.0447              & 0.0212\\
            95\% confidence interval       & $(0.0416,0.0480)$   & $(0.0195,0.0229)$\\ \hline
        \end{tabular}
    \end{center}
    \caption{The theoretical covariance $\myCov(X_1(1000),X_2(1000))$, the sample covariance with a sample size of 10000, and the 95\% confidence interval computed using bootstrap with 10000 replicates,  where $\bfX\sim WVAG^2\text{-}OU(\lambda,a,\bfalpha,\Sigma,\bfeta)$ or $\bfX\sim OU\text{-}WVAG^2(\lambda,a,\bfalpha,\bfmu,\Sigma,\bfeta)$.}
    \label{covtable}
\end{table}

Thirdly, we check that the autocorrelation function of $X_1$ and $X_2$ are correct for one simulated sample path of $\bfX$. Figure \ref{acfsd} shows the theoretical autocorrelation function of $X_1$ and $X_2$, which is $t\mapsto e^{-\lambda t}$ by Remark \ref{acfrem}, and the sample autocorrelation function. In all cases, both the theoretical and sample autocorrelation functions closely match.

\begin{figure}[!h]
    \begin{center}
        \includegraphics[scale=.75]{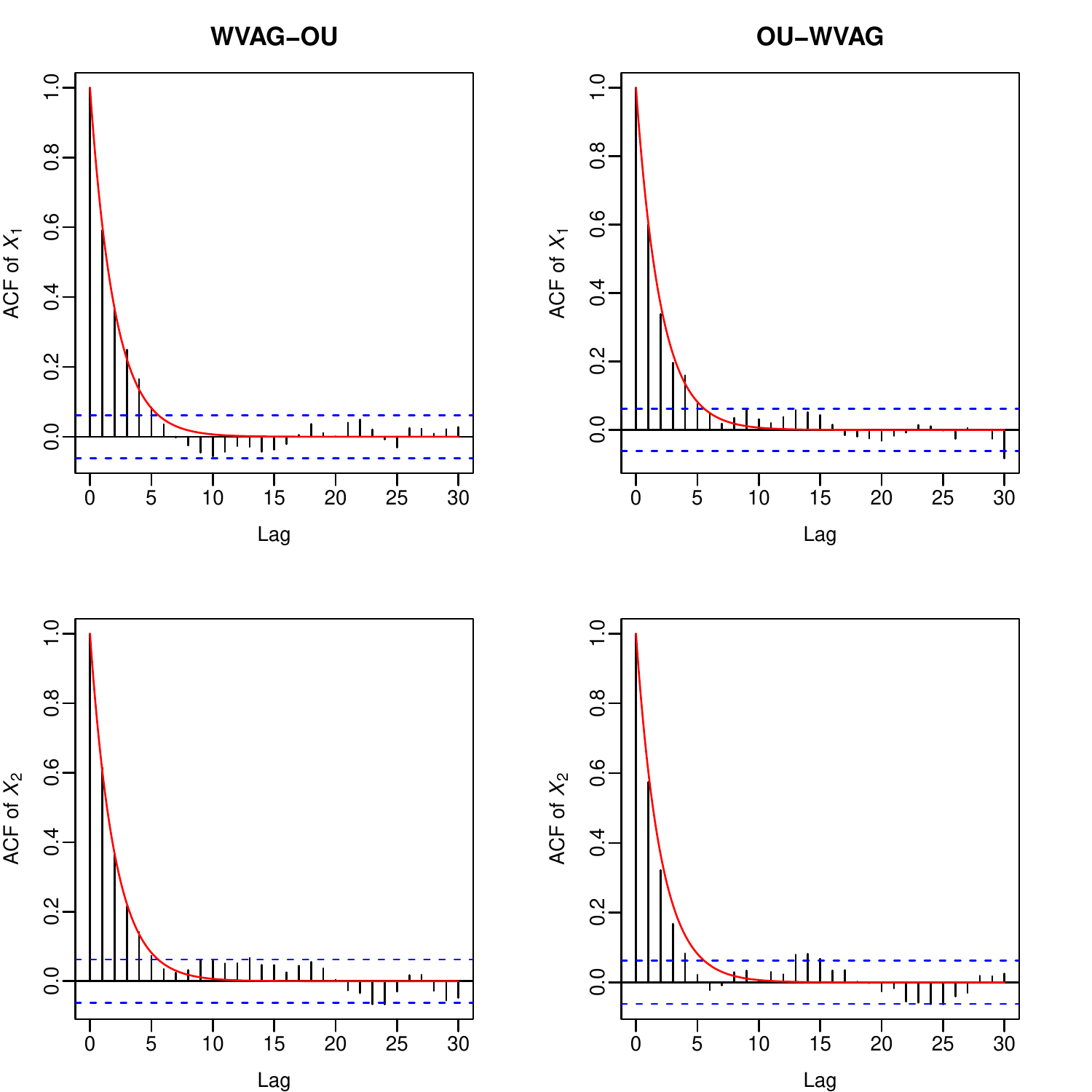}
        \caption{Theoretical (red line) and sample (black line) autocorrelation function for $X_1$ (top) and $X_2$ (bottom), where $\bfX\sim WVAG^2\text{-}OU(\lambda,a,\bfalpha,\Sigma,\bfeta)$ (left) or $\bfX\sim OU\text{-}WVAG^2(\lambda,a,\bfalpha,\bfmu,\Sigma,\bfeta)$ (right). The 95\% confidence interval around 0 is given by the dashed blue line.}
        \label{acfsd}
    \end{center}
\end{figure}

All three checks give the result we expect, which indicates that the simulation methods are correct.

\subsection{Estimation Method and Results}\label{estresultsec}

Recall that $\Delta=1$, $m=1000$, and we now make 500 Monte Carlo simulations of the observations $\bfX(0),\bfX(t_1),\dots,\bfX(t_m)$. In this section, we discuss the Monte Carlo results for simulating and estimating the parameters in the case where $\bfX\sim WVAG^2\text{-}OU(\lambda,a,\bfalpha,\Sigma,\bfeta)$ or $\bfX\sim OU\text{-}WVAG^2\allowbreak(\lambda,a,\bfalpha,\bfmu,\Sigma,\bfeta)$. The simulation is done as outlined in Section~\ref{simresultsec} with $\Delta=1$.

\paragraph{WVAG-OU process} 
Let $\bfX\sim WVAG^2\text{-}OU(\lambda,a,\bfalpha,\Sigma,\bfeta)$. 
We assume that $\lambda$ and $\bfeta$ are known, and are not estimated, as they can be exactly recovered by Remark~\ref{exactrecover}, similar to \cite[Table 1]{vstt}. Thus, there are 6 parameters $\bfvtheta = (a,\bfalpha,\Sigma)$ to be estimated using ML with the likelihood function in \eqref{wvagoulik}. Using the notation there, this requires computing the Lebesgue densities $f_{\bfX(0)}$, $f_1,f_2,f_0$ by taking the Fourier inversion of the characteristic functions corresponding to \eqref{v-alpha-g-cf} and \eqref{psi12}--\eqref{psi0}, respectively. Since the marginal components $X_1$, $X_2$ are also LDOUPs by Remark~\ref{marginallem}, we use the decoupled estimation method in \cite{BLM17c,MiSz17,LS10} as follows. The initial value of the parameters in the optimisations are set to the true value.

\begin{enumerate}
    \item The marginal parameters $(\alpha_k,\Sigma_{kk})$ are estimated using ML on the marginal observations $X_k(0),\dots,X_k(t_m)$, $k=1,2$.
    \item The full parameters $\bfvtheta$ are estimated using ML on the observations $\bfX(0),\dots,\allowbreak \bfX(t_m)$ with the initial values of $(\alpha_k,\Sigma_{kk})$, $k=1,2$, given by the estimates in Step 1.
\end{enumerate}

For Step 1, the likelihood function is given in Remark \ref{marglik}. The Fourier inversion method used is described in \cite[Section 4.1]{MiSz17}. Using the notation there, we set the number of grid points along each axis to be $N=2^{13}$ and the spacing to be $h_k=2^{-7}\sqrt{m_2(Z_k^*(\Delta))}$ for 1-dimensional Fourier inversion, and $N=2^{10}$ and $h_k=2^{-5}\sqrt{m_2(Z_k^*(\Delta))}$ for 2-dimensional Fourier inversion, for the components $k=1,2$.

\begin{table}[h]
    \begin{center}
        \begin{tabular}{ccccc}
            \hline
            {\bf Parameter/moment} & {\bf True value} & {\bf Mean estimate} & {\bf RMSE} & \\ \hline
            $a$                                    & 1      & 1.0014 & 0.0483\\
            $\alpha_1$                             & 0.9    & 0.9032 & 0.0382\\
            $\alpha_2$                             & 0.5    & 0.5008 & 0.0205\\
            $\Sigma_{11}$                          & 0.18   & 0.1810 & 0.0139\\
            $\Sigma_{22}$                          & 0.08   & 0.0799 & 0.0055\\
            $\Sigma_{12}$                          & 0.09   & 0.0903 & 0.0108\\\hline
            $m_2(Z_1^*(\Delta))$                   & 0.3093 & 0.3110 & 0.0238\\
            $m_4(Z_1^*(\Delta))$                   & 0.8459 & 0.8615 & 0.1300\\
            $m_2(Z_2^*(\Delta))$                   & 0.1375 & 0.1373 & 0.0094\\
            $m_4(Z_2^*(\Delta))$                   & 0.1180 & 0.1183 & 0.0163\\
            $\myCov(Z_1^*(\Delta),Z_2^*(\Delta))$  & 0.0773 & 0.0777 & 0.0092\\\hline
        \end{tabular}
    \end{center}
    \caption{Estimation results for the process $\bfX\sim WVAG^2\text{-}OU(\lambda,a,\bfalpha,\Sigma,\bfeta)$ based on 500 Monte Carlo simulations with $m=1000$.}
    \label{table1}
\end{table}

The estimation results are presented in Table \ref{table1}. For each parameter and the listed moments (see Corollary~\ref{wvagoumom}), we show the true value, mean estimate and RMSE over 500 Monte Carlo simulations. The estimated moments are calculated using the estimated parameters. We see that the estimated parameters are very accurate. Moreover, the fitted distribution is close to the true distribution in terms of the listed moments. The other moments, $m_1(\bfZ^*(\Delta))$ and $m_3(\bfZ^*(\Delta))$, are not listed since they do not depend on $\bfvtheta$ and so have no error.

\paragraph{OU-WVAG process} 
Now let $\bfX\sim OU\text{-}WVAG^2(\lambda,a,\bfalpha,\bfmu,\Sigma,\bfeta)$. There are 11 parameters $\bfvtheta = (\lambda,a,\bfalpha,\Sigma,\bfeta)$ to be estimated. In principle, the likelihood function \eqref{likfn} is applicable. However, there is an extremely heavy computational burden in maximising this as discussed in Remark~\ref{compburd}, so instead, we approximate the ML method with a stepwise procedure. Again, the initial value of the parameters in the optimisations are set to the true value.

\begin{enumerate}
    \item The parameter $\lambda$ is estimated by minimising the squared distance 
    \begin{align*}
        (e^{-\lambda\Delta}-\hat\rho_{X_1}(\Delta))^2 + (e^{-\lambda\Delta}-\hat\rho_{X_2}(\Delta))^2
    \end{align*}
    between the theoretical autocorrelation $e^{-\lambda\Delta}$ and sample autocorrelation $\hat\rho_{X_k}(\Delta)$ of lag $\Delta$ from the marginal observations $X_k(0),\dots,X_k(t_m)$, $k=1,2$.
    
    \item Given the estimate of $\lambda$, the marginal parameters $(\alpha_k,\mu_k,\Sigma_{kk},\eta_k)$ are estimated using ML on the marginal observations $X_k(0),\dots,X_k(t_m)$, $k=1,2$.
    \item Given the estimates of $(\lambda,\bfalpha,\bfmu,\Sigma_{11},\Sigma_{22},\bfeta)$, the joint parameters $(a,\Sigma_{12})$ are estimated using ML on the joint observations $\bfX(0),\dots, \bfX(t_m)$ subject to the constraint that the theoretical covariance $\myCov(Z^*_1(\Delta),Z^*_2(\Delta))$ in \eqref{mom2} and corresponding sample covariance matches.
\end{enumerate}

Step 1 is justified by Remark \ref{acfrem}, and similar to the method mentioned in \cite[page 9]{vstt}. Step 2 is justified by Remark~\ref{marginallem} as the LDOUP $X_k$ depends only on the marginal parameters $(\alpha_k,\mu_k,\Sigma_{kk},\eta_k)$ given $\lambda$. Steps 2 and 3 use the appropriate likelihood function in \eqref{likfn} with the ML method outlined in Section~\ref{liksec}. Note that the characteristic functions $\Phi_\bfY$ and $\Phi_{\bfZ^*(\Delta)}$ determined by \eqref{cey} and \eqref{cez} are evaluated using numerical integration. The details of the Fourier inversion method are outlined above. In Step 3, we are not using estimates of the previous steps as initial values, and combined with the constraint, this reduces the optimisation to a faster 1-dimensional problem, which is important given the high computational burden of evaluating the likelihood function. There are further optimisation constraints from the parameter restriction $|\Sigma_{12}/\sqrt{\Sigma_{11}\Sigma_{22}}|<1$ and $a\in(0,1/\alpha_1\wedge1/\alpha_2)$.

\begin{table}[h]
    \begin{center}
        \begin{tabular}{ccccc}
            \hline
            {\bf Parameter/moment} & {\bf True value} & {\bf Mean estimate} & {\bf RMSE} & \\ \hline
            $\lambda$                             & $\phantom{-}0.5$    &  $\phantom{-}0.5058$ & 0.0300\\
            $a$                                   & $\phantom{-}1$      &  $\phantom{-}0.9435$ & 0.1321\\
            $\alpha_1$                            & $\phantom{-}0.9$    &  $\phantom{-}0.9140$ & 0.0565\\
            $\alpha_2$                            & $\phantom{-}0.5$    &  $\phantom{-}0.5099$ & 0.0491\\
            $\mu_1$                               & $\phantom{-}0.15$   &  $\phantom{-}0.1548$ & 0.0230\\
            $\mu_2$                               & $-0.06$             &  $-0.0611$           & 0.0188\\
            $\Sigma_{11}$                         & $\phantom{-}0.18$   &  $\phantom{-}0.1840$ & 0.0175\\
            $\Sigma_{22}$                         & $\phantom{-}0.08$   &  $\phantom{-}0.0804$ & 0.0062\\
            $\Sigma_{12}$                         & $\phantom{-}0.09$   &  $\phantom{-}0.0946$ & 0.0179\\
            $\eta_1$                              & $-0.06$             & $-0.0621$            & 0.0092\\
            $\eta_1$                              & $\phantom{-}0 $     & $-0.0014$            & 0.0131\\\hline
            $m_1(Z_1^*(\Delta))$                  & $\phantom{-}0.0584$ &  $\phantom{-}0.0609$ & 0.0133\\
            $m_2(Z_1^*(\Delta))$                  & $\phantom{-}0.1720$ &  $\phantom{-}0.1802$ & 0.0185\\
            $m_3(Z_1^*(\Delta))$                  & $\phantom{-}0.0910$ &  $\phantom{-}0.1002$ & 0.0212\\
            $m_4(Z_1^*(\Delta))$                  & $\phantom{-}0.2949$ &  $\phantom{-}0.3299$ & 0.0717\\
            $m_1(Z_2^*(\Delta))$                  & $-0.0389$           & $-0.0413$            & 0.0099\\
            $m_2(Z_2^*(\Delta))$                  & $\phantom{-}0.0703$ &  $\phantom{-}0.0722$ & 0.0067\\
            $m_3(Z_2^*(\Delta))$                  & $-0.0085$           & $-0.0091$            & 0.0032\\
            $m_4(Z_2^*(\Delta))$                  & $\phantom{-}0.0315$ &  $\phantom{-}0.0339$ & 0.0069\\
            $\myCov(Z_1^*(\Delta),Z_2^*(\Delta))$ &$ \phantom{-}0.0352$ &  $\phantom{-}0.0355$ & 0.0062\\\hline
        \end{tabular}
    \end{center}
    \caption{Estimation results for the process $\bfX\sim OU\text{-}WVAG^2(\lambda,a,\bfalpha,\bfmu,\Sigma,\bfeta)$ based on 500 Monte Carlo simulations with $m=1000$.}
    \label{table2}
\end{table}

The estimation results are presented in Table \ref{table2}. The moments are calculated as described in Remark~\ref{ouwvagmom}. From the results, the estimated parameter and moments of the fitted distribution are reasonably accurate, though there are generally larger RMSEs and biases compared to the estimation of the WVAG-OU process, which is unsurprising. 

\subsection{Discussion}

This paper provides a general method for simulation and calibration using ML for multivariate LDOUPs, the latter assuming $\bfZ^*(\Delta)$ is absolutely continuous. For the WVAG-OU process, a likelihood function was derived taking into account that $\bfZ^*(\Delta)$ is a discrete and continuous mixture, and our Monte Carlo results show the estimation method is highly accurate.
In contrast, for the OU-WVAG process, the Monte Carlo results are less accurate given the approximate stepwise procedure used, which is needed due to the computational burden of not having a closed-form for $\Psi_{\bfZ^*(\Delta)}$, as explained in Remark \ref{compburd} and having to estimate 5 more parameters, yet estimation remains very slow. Using a desktop computer with a 3.0 GHz, 8-core processor and the \verb|doParallel| package in R, the total time to do the estimation over the 500 Monte Carlo simulations was 20,084 seconds for the WVAG-OU process and 351,515 seconds for the OU-WVAG process. Despite the difficulties, the OU-WVAG process may be a more realistic model than the WVAG-OU process for various stochastic phenomena as the latter has a deterministic sample path between jumps.

There are various research directions for LDOUPs beyond the scope of this paper. On the statistical side, ideas include formulating a LDOUP with infinite activity and a closed-form for $\Psi_{\bfZ^*(\Delta)}$
or proving asymptotic convergence results for the ML estimators studied here. In addition, one could consider more general models such as where the autocorrelation parameter $\lambda$ is replaced by a matrix as in \cite{Fas13} and sums of LDOUPs as in \cite{BNSh01}.


Some potential applications of our methods in mathematical finance include the multivariate modelling of short rates, volatilities, energy prices or price spreads using LDOUPs as noted in Section \ref{intro}. Moreover, our simulation method can be used for Monte Carlo option pricing when the risk-neutral dynamics of an energy price follow a known LDOUP. However, the calibration problem addressed here is easier than those that may occur in practice when the LDOUP is not directly observable. For instance, in the Barndorff-Nielsen and Shephard model where stochastic volatility is modelled by a LDOUP, the model is calibrated to stock prices or option prices. If the risk-neutral dynamics of an energy price follows a LDOUP with unknown parameters, then the model is calibrated to option prices. In both cases, parameters cannot be calibrated to the unobservable LDOUP. This problem has been studied in \cite{BNSh01,CKM18,GS06,PFH14}. Though  we make a contribution to calibration in the situation where the multivariate LDOUP is observed, extensions to deal with this problem are beyond our scope and an area open to future research.


\section{Proofs}\label{sec6}

\subsection{Useful Lemmas}

The following lemma states the form of the likelihood function that any Markov process whose value at the current observation time is some particular deterministic function of its value at the previous observation time and an independent innovation term should take. A straightforward application of this lemma implies Corollary \ref{likwvagou} and  Proposition \ref{likprop}.

\begin{lemma}\label{markovllf}
    
    For an $n$-dimensional Markov process $\bfX$, let $\bfx=(\bfx_0,\dots,\bfx_m)$ be the vector of observations of $(\bfX(0),\dots,\allowbreak\bfX(t_m))$ satisfying $\bfX(t_k) = M_{\bfX(t_{k-1})}(\bfZ^{(k)})$, $k=1,\dots,m$, where $\bfX(0),\bfZ^{(k)}$, $k=1,\dots,m$, are independent, each with law determined by the parameter vector $\bfvtheta$, $\bfX(0)$ has a Lebesgue density $f_{\bfX(0)}$ while each $\bfZ^{(k)}$ has density $f_k$ with respect to a measure $\nu_k$, and each $M_{\bfx_k} :\RR^n\to\RR^n$ is a bijective Borel measurable function defined by the mapping $\bfz\mapsto M(\bfx_k,\bfz,\bfvtheta)$ for some function $M$.
    If there exists a dominating measure $\nu^*$ such that $(\nu_m\circ M_{\bfx_{m-1}}^{-1})(\rmd \bfx_m)\dots (\nu_1  \circ M_{\bfx_{0}}^{-1})(\rmd \bfx_1)\rmd \bfx_0$ has a Radon-Nikodym derivative $g_{\bfvtheta}$ with respect to  $\nu^*$ for all possible values of $\bfvtheta$ (that is, $\nu^*$ cannot depend on $\bfvtheta$ but $g_{\bfvtheta}$ can), then the likelihood function is $\bfvtheta\mapsto L(\bfvtheta,\bfx)$, where
    \begin{align}\label{genlik}
        L(\bfvtheta,\bfx) =g_{\bfvtheta}(\bfx)f_{\bfX(0)}(\bfx_0)\prod_{k=1}^m (f_k\circ M_{\bfx_{k-1}}^{-1})(\bfx_{k}).
    \end{align}
\end{lemma}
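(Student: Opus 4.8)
The plan is to compute the joint law of the observation vector $(\bfX(0),\bfX(t_1),\dots,\bfX(t_m))$ directly, using the Markov/recursive structure, and then identify its Radon-Nikodym derivative with respect to the stated dominating measure $\nu^*$. First I would observe that, because $\bfX(0),\bfZ^{(1)},\dots,\bfZ^{(m)}$ are independent and $\bfX(t_k)=M_{\bfX(t_{k-1})}(\bfZ^{(k)})$, the vector $(\bfX(0),\bfZ^{(1)},\dots,\bfZ^{(m)})$ has law $f_{\bfX(0)}(\bfx_0)\,\rmd\bfx_0\otimes f_1(\bfz_1)\,\nu_1(\rmd\bfz_1)\otimes\cdots\otimes f_m(\bfz_m)\,\nu_m(\rmd\bfz_m)$. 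The map $(\bfx_0,\bfz_1,\dots,\bfz_m)\mapsto(\bfx_0,M_{\bfx_0}(\bfz_1),M_{M_{\bfx_0}(\bfz_1)}(\bfz_2),\dots)$ is a bijection onto its range (each $M_{\bfx}$ being a bijection of $\RR^n$), with measurable inverse given recursively by $\bfz_k=M_{\bfx_{k-1}}^{-1}(\bfx_k)$. Pushing the law forward under this map and keeping careful track of the substitution in each coordinate, I get that $\PPP_{(\bfX(0),\dots,\bfX(t_m))}(\rmd\bfx_0,\dots,\rmd\bfx_m)$ equals
\begin{align*}
    f_{\bfX(0)}(\bfx_0)\prod_{k=1}^m (f_k\circ M_{\bfx_{k-1}}^{-1})(\bfx_k)\;\; \rmd\bfx_0\,(\nu_1\circ M_{\bfx_0}^{-1})(\rmd\bfx_1)\cdots(\nu_m\circ M_{\bfx_{m-1}}^{-1})(\rmd\bfx_m).
\end{align*}

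Next I would invoke the hypothesis that $\rmd\bfx_0\,(\nu_1\circ M_{\bfx_0}^{-1})(\rmd\bfx_1)\cdots(\nu_m\circ M_{\bfx_{m-1}}^{-1})(\rmd\bfx_m)$ admits a Radon-Nikodym derivative $g_{\bfvtheta}$ with respect to a $\bfvtheta$-independent measure $\nu^*$. Substituting this in gives $\PPP_{(\bfX(0),\dots,\bfX(t_m))}(\rmd\bfx)=L(\bfvtheta,\bfx)\,\nu^*(\rmd\bfx)$ with $L$ as in \eqref{genlik}, so that $L(\bfvtheta,\cdot)$ is by definition the likelihood function (following the convention recalled before Corollary~\ref{likwvagou} and in \cite[Section 1.3.1]{sche95}). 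The $\bfvtheta$-independence of $\nu^*$ is exactly what makes $L(\bfvtheta,\cdot)$ a legitimate likelihood rather than a quantity defined relative to a moving reference measure.

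The main technical point — and the step I expect to require the most care — is the measure-theoretic bookkeeping in the change of variables: the ``chained'' substitution is not a single product-space diffeomorphism but an iterated one, where the $k$-th coordinate map $M_{\bfx_{k-1}}$ depends on the already-transformed previous coordinate. I would handle this by disintegration/Fubini one layer at a time, integrating out $\bfz_m$ first (freezing $\bfx_0,\dots,\bfx_{m-1}$, hence freezing the bijection $M_{\bfx_{m-1}}$, and using the image-measure formula $\int h(M_{\bfx_{m-1}}(\bfz_m))f_m(\bfz_m)\nu_m(\rmd\bfz_m)=\int h(\bfx_m)(f_m\circ M_{\bfx_{m-1}}^{-1})(\bfx_m)(\nu_m\circ M_{\bfx_{m-1}}^{-1})(\rmd\bfx_m)$), then peeling off $\bfz_{m-1}$, and so on down to $\bfz_1$; measurability of $(\bfx_{k-1},\bfx_k)\mapsto M_{\bfx_{k-1}}^{-1}(\bfx_k)$ follows from the joint measurability of $M$ in its arguments. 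Everything else is routine once this induction is set up cleanly.
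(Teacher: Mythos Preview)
Your proposal is correct and follows essentially the same route as the paper: both compute the joint law of $(\bfX(0),\dots,\bfX(t_m))$ by an iterated change of variables using the transformation theorem one coordinate at a time, then invoke the hypothesis on $g_{\bfvtheta}$ and $\nu^*$. The only cosmetic difference is that the paper sets this up via the cumulative distribution function and the Markov conditional laws $\PPP_{\bfX(t_k)\mid\bfX(t_{k-1})=\bfx_{k-1}}$ (then applies the image-measure formula to each), whereas you push forward the product law of $(\bfX(0),\bfZ^{(1)},\dots,\bfZ^{(m)})$ under the recursive bijection; the layer-by-layer Fubini/peeling step you describe is exactly what the paper does.
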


\begin{proof}
    Let $\bfy_k=(y_{k1},\dots,y_{kn})\in\RR^n$ and $A_k := (-\infty,y_{k1}]\times \dots \times (-\infty,y_{kn}]$ for $k=0,1,\dots,m$. Interpreting the relation $\leq$ componentwise, consider the cumulative distribution function
    \begin{align}
        P:={}&\PP((\bfX(0),\dots,\bfX(t_m))\leq(\bfy_0,\dots,\bfy_m))\nonumber\\
        \begin{split}
            ={}&\int_{\RR^{n(m+1)}} \eins_{A_0}(\bfx_0)\dots\eins_{A_m}(\bfx_m)\,\PPP_{\bfX(t_m)\given\bfX(t_{m-1})=\bfx_{m-1}}(\rmd\bfx_m)\dots \\
            &\phantom{\int_{\RR^{n(m+1)}} \eins_{A_0}(\bfx_0)\dots\eins_{A_m}(\bfx_m)\,\,}  \PPP_{\bfX(t_1)\given\bfX(0)=\bfx_{0}}(\rmd\bfx_1)\PPP_{\bfX(0)}(\rmd \bfx_0) \label{itercdf}
        \end{split}
    \end{align}
    using the Markov property of $\bfX$. Note that $M_{\bfx_{k-1}}$ has inverse $M^{-1}_{\bfx_{k-1}}$, then we have
    \begin{align}
        \int_{A_k}\PPP_{\bfX(t_k)\given\bfX(t_{k-1})=\bfx_{k-1}}(\rmd\bfx_k) ={}& \PP(\bfX(t_k)\leq \bfy_k\given \bfX(t_{k-1})=\bfx_{k-1})\nonumber\\
        ={}& \PP(\bfZ^{(k)}\in  M^{-1}_{\bfx_{k-1}}(A_k))\nonumber\\
        ={}&\int_{\RR^n} \eins_{M^{-1}_{\bfx_{k-1}}(A_k)}(\bfz)f_k(\bfz)\,\nu_k(\rmd\bfz)\nonumber\\
        ={}&\int_{\RR^n} \eins_{A_k}(\bfx)(f_k\circ M^{-1}_{\bfx_{k-1}})(\bfx)\,(\nu_k\circ M^{-1}_{\bfx_{k-1}})(\rmd\bfx), \label{indivcdf}
    \end{align}
    where dropping the conditioning in the second line is justified as $\bfX(t_{k-1})$, being a function of $\bfX_0,\bfZ^{(1)},\dots, \bfZ^{(k-1)}$ only, is independent of $\bfZ^{(k)}$, and the last line follows from the transformation theorem \cite[Corollary 19.2]{Ba01}. Combining \eqref{itercdf} and \eqref{indivcdf} with Fubini's theorem gives
    \begin{align*}
        P={}&\int_{\RR^{n(m+1)}} \eins_{A_0}(\bfx_0)\dots\eins_{A_m}(\bfx_m)f_{\bfX(0)}(\bfx_0)\\
        &\phantom{\int_{\RR^{n(m+1)}} \,\,}\prod_{k=1}^m (f_k\circ M_{\bfx_{k-1}}^{-1})(\bfx_k)\,(\nu_m\circ M_{\bfx_{m-1}}^{-1})(\rmd \bfx_m)\dots(\nu_1  \circ M_{\bfx_{0}}^{-1})(\rmd \bfx_1)\rmd \bfx_0\\
        ={}&\int_{A_0\times\dots \times A_m}g_{\bfvtheta}(\bfx_0,\dots, \bfx_m)f_{\bfX(0)}(\bfx_0)\prod_{k=1}^m (f_k\circ M_{\bfx_{k-1}}^{-1})(\bfx_{k}) \,\nu^*(\rmd \bfx_0,\dots,\rmd \bfx_m),
    \end{align*}
    which implies the likelihood function is given by \eqref{genlik}. 
\end{proof}

The below lemma is based on \cite[pg 13]{vstt}.

\begin{lemma}\label{cppzlem}
    Let $\bfZ\sim CP^n(b,\PPP,\bfeta)$ be given by
    \begin{align}\label{cppdrift}
        \bfZ(t) = \bfeta t+ \sum_{k=1}^{N(t)} \bfJ_k,\quad t\geq0,
    \end{align}
    where $N\sim P_S(b)$ and $\bfJ_k$, $k\in\NN$, are iid with probability law $\PPP$. Then
    \begin{align*}
        \bfZ^*(\Delta) \eqd \bfeta(e^{\lambda \Delta}-1)+\sum_{k=1}^{N(\lambda \Delta)}e^{T_k}\bfJ_k,
    \end{align*}
    where $T_k$, $k\in\NN$, is the $k$th arrival time of $N$.
\end{lemma}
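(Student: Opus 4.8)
The plan is to use the fact that $\bfZ$, being a compound Poisson process with a linear drift, has piecewise-linear sample paths of finite variation on $[0,\lambda\Delta]$, so that the stochastic integral in the definition \eqref{zstar} of $\bfZ^{*}(\Delta)$ agrees with the ordinary pathwise Lebesgue--Stieltjes integral and can be evaluated by integration by parts. Since $s\mapsto e^{s}$ is deterministic, continuous and of finite variation, the integration-by-parts formula for stochastic integrals carries no quadratic-covariation term and gives
\begin{align*}
    \bfZ^{*}(\Delta) = \int_{0}^{\lambda\Delta} e^{s}\,\rmd\bfZ(s) = e^{\lambda\Delta}\bfZ(\lambda\Delta) - \bfZ(0) - \int_{0}^{\lambda\Delta}\bfZ(s)\,e^{s}\,\rmd s .
\end{align*}

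Next I would substitute the representation \eqref{cppdrift}, namely $\bfZ(s) = \bfeta s + \sum_{k=1}^{N(s)}\bfJ_{k}$ with $\bfZ(0) = \bfnull$, into this identity and split the right-hand side into a drift part and a jump part. The drift part needs only the elementary integral $\int_{0}^{\lambda\Delta} s\,e^{s}\,\rmd s = (\lambda\Delta - 1)e^{\lambda\Delta} + 1$, after which the drift terms collapse to $\bfeta(e^{\lambda\Delta} - 1)$. For the jump part, writing $\sum_{k=1}^{N(s)}\bfJ_{k} = \sum_{k\ge 1}\bfJ_{k}\eins_{\{T_{k}\le s\}}$ and interchanging summation and integration yields
\begin{align*}
    \int_{0}^{\lambda\Delta}\sum_{k=1}^{N(s)}\bfJ_{k}\,e^{s}\,\rmd s = \sum_{k=1}^{N(\lambda\Delta)}\bfJ_{k}\bigl(e^{\lambda\Delta} - e^{T_{k}}\bigr),
\end{align*}
since the jump times of $N$ in $[0,\lambda\Delta]$ are exactly $T_{1},\dots,T_{N(\lambda\Delta)}$. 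Combining the two pieces, the terms $e^{\lambda\Delta}\sum_{k=1}^{N(\lambda\Delta)}\bfJ_{k}$ cancel and what is left is $\bfeta(e^{\lambda\Delta} - 1) + \sum_{k=1}^{N(\lambda\Delta)} e^{T_{k}}\bfJ_{k}$, which is the asserted expression --- in fact as an almost-sure identity, hence a fortiori in distribution.

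This computation is routine, and I do not expect a real obstacle: the only point deserving a sentence of justification is that $\int_{0}^{\lambda\Delta} e^{s}\,\rmd\bfZ(s)$ coincides with the pathwise integral and that integration by parts applies, both of which follow at once from the finite variation of $\bfZ$ on $[0,\lambda\Delta]$ together with the smoothness of $e^{(\cdot)}$. If one instead wished to argue purely at the level of distributions, an alternative is to match characteristic functions: by \eqref{cez} the left-hand side has characteristic exponent $\int_{0}^{\lambda\Delta}\Psi_{\bfZ}(e^{s}\bftheta)\,\rmd s$, while for the right-hand side one conditions on $N(\lambda\Delta)$ and uses that, given $N(\lambda\Delta) = j$, the times $T_{1} < \dots < T_{j}$ are distributed as the order statistics of $j$ independent uniforms on $[0,\lambda\Delta]$; the direct argument above is shorter, so that is the one I would write out.
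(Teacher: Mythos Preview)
Your argument is correct and in fact yields the result as an almost-sure identity, which is slightly stronger than the equality in distribution stated. The route, however, differs from the paper's. The paper does not invoke integration by parts or the finite-variation structure of $\bfZ$; instead it approximates $e^{s}$ on $[0,\lambda\Delta]$ by step functions $H_{\wt\Delta}$, writes $\int_0^{\lambda\Delta}H_{\wt\Delta}(s)\,\rmd\bfZ(s)$ explicitly using the compound-Poisson representation \eqref{cppdrift}, and shows that as $\wt\Delta\to0$ this converges almost surely to $\bfeta(e^{\lambda\Delta}-1)+\sum_{l}e^{T_l}\bfJ_l$, while separately (via the $L^2$ construction of the stochastic integral used later in the paper) the same approximants converge in distribution to $\bfZ^*(\Delta)$; uniqueness of limits then gives the claim.

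Your pathwise computation is shorter and more transparent for this particular $\bfZ$: it exploits exactly the finite-variation property of a compound Poisson process with drift, so that the stochastic integral is simply a Lebesgue--Stieltjes integral and integration by parts with the smooth integrand $e^{(\cdot)}$ is immediate. The paper's approximation argument, by contrast, is closer in spirit to the general definition of $\int e^s\,\rmd\bfZ(s)$ it uses elsewhere (for BDLPs that need not have finite variation) and reuses the convergence \eqref{convgz} that is proved anyway for Proposition~\ref{zconvglem1}. Either argument is perfectly adequate here; yours is the more elementary one, while the paper's ties the lemma into the same machinery used for the general simulation scheme.
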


\begin{proof}
    For all $\wt\Delta>0$, introduce the simple function
    \begin{align}\label{simplefn}
        H_{\wt\Delta}(s) := \eins_{\{0\}}(s) +\sum_{k=1}^{\wt n+1} e^{(k-1)\wt\Delta}\eins_{((k-1)\wt\Delta, k \wt\Delta]}(s),
    \end{align}
    where $\wt n =   \lfloor  \lambda\Delta/\wt\Delta  \rfloor$.
    By the definition of the stochastic integral for simple functions (see \cite[Equation (2.4)]{saya}) and \eqref{cppdrift}, we have
    \begin{align*}
        \int_0^{\lambda\Delta} H_{\wt\Delta}(s)\,\rmd \bfZ(s) ={}& \sum_{k=0}^{\wt n}  e^{k\wt\Delta}(\bfZ((k+1)\wt\Delta) - \bfZ(k\wt\Delta))\\
        ={}& \sum_{k=0}^{\wt n} \bfeta e^{k\wt\Delta}\wt\Delta + \sum_{k=0}^{\wt n} e^{k\wt\Delta}\sum_{l \in S_k}\bfJ_l\\
        \stas {} & \bfeta\int_0^{\lambda\Delta}  e^{s} \,\rmd s + \sum_{l = 1}^{N(\lambda \Delta)}e^{T_l}\bfJ_{l}
    \end{align*}
    as $\wt\Delta \to 0$, where $S_k:= \{l=1,\dots,N(\lambda \Delta):T_l \in (k\wt\Delta,(k+1)\wt\Delta]\}$. Consequently, 
    \begin{align*}
        \int_0^{\lambda\Delta} H_{\wt\Delta}(s)\,\rmd \bfZ(s)\std \bfeta(e^{\lambda \Delta}-1)+\sum_{l=1}^{N(\lambda \Delta)}e^{T_l}\bfJ_l
    \end{align*}
    as $\wt\Delta \to 0$. However, \eqref{convgz} below gives  $\int_0^{\lambda\Delta} H_{\wt\Delta}(s)\,\rmd \bfZ(s)\std \bfZ^*(\Delta)$ as $\wt\Delta \to 0$. Since limits in distribution are unique, the result follows.  
\end{proof}  

\begin{lemma}\label{cpvglem}
    The compound Poisson process $\bfZ\sim CP^n(2a,\PPP)$, where $\PPP$ is the probability law of  $VG^n(1,\bfnull,\Sigma)$, has characteristic exponent
    \begin{align}
        \Psi_{\bfZ}(\bftheta) =-\frac{a \|\bftheta\|^2_{\Sigma}}{1+\frac 12 \|\bftheta\|^2_{\Sigma}},\quad \bftheta\in\RR^n.
    \end{align}
\end{lemma}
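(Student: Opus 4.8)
The plan is to compute the characteristic exponent directly from the definition of a compound Poisson process. Recall that if $\bfZ\sim CP^n(b,\PPP)$, then its characteristic function at time $1$ is $\Phi_{\bfZ}(\bftheta) = \exp(b(\Phi_{\PPP}(\bftheta)-1))$, where $\Phi_{\PPP}$ is the characteristic function of a random vector with law $\PPP$; equivalently, the characteristic exponent is $\Psi_{\bfZ}(\bftheta) = b(\Phi_{\PPP}(\bftheta)-1)$. So the first step is simply to identify $b = 2a$ and to recall the characteristic function of the variance gamma law $VG^n(1,\bfnull,\Sigma)$.

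For the second step, I would recall that, by the definition given in Section \ref{sec2}, a process $\bfV\sim VG^n(1,\bfnull,\Sigma)$ satisfies $\bfV\eqd \bfB\circ(G,\dots,G)$ with $\bfB\sim BM^n(\bfnull,\Sigma)$ and $G\sim\Gamma_S(1,1)$ independent. Conditioning on $G(1)$ and using the Brownian characteristic function gives
\begin{align*}
    \Phi_{VG^n(1,\bfnull,\Sigma)}(\bftheta) = \EE\left[e^{-\frac12 G(1)\|\bftheta\|^2_{\Sigma}}\right] = \left(1+\tfrac12\|\bftheta\|^2_{\Sigma}\right)^{-1},
\end{align*}
since $G(1)$ is exponentially distributed with rate $1$ (shape $1$, rate $1$), whose moment generating function evaluated at $-\frac12\|\bftheta\|^2_{\Sigma}$ is exactly $(1+\frac12\|\bftheta\|^2_{\Sigma})^{-1}$. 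This step is essentially a one-line computation; one could alternatively just quote the VG characteristic exponent formula \eqref{v-alpha-g-cf} in the appropriate degenerate case, or the standard VG characteristic function from the literature.

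The final step is to substitute into $\Psi_{\bfZ}(\bftheta) = 2a(\Phi_{\PPP}(\bftheta)-1)$:
\begin{align*}
    \Psi_{\bfZ}(\bftheta) = 2a\left(\frac{1}{1+\frac12\|\bftheta\|^2_{\Sigma}}-1\right) = 2a\cdot\frac{-\frac12\|\bftheta\|^2_{\Sigma}}{1+\frac12\|\bftheta\|^2_{\Sigma}} = -\frac{a\|\bftheta\|^2_{\Sigma}}{1+\frac12\|\bftheta\|^2_{\Sigma}},
\end{align*}
which is the claimed formula. There is no real obstacle here: the only thing to be slightly careful about is matching the parametrisation of the gamma subordinator (shape and rate both equal to $b=1$, so that $G(1)$ has mean $1$ and the VG characteristic function has the clean form above) and confirming that the drift and mean of the underlying Brownian motion are both $\bfnull$ so that no linear term $\rmi\skal{\bfmu}{\bftheta}$ appears. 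This lemma is a computational building block feeding into the proof of Theorem \ref{wvagouchar}(i)--(ii), where $\Psi_{\bfZ}$ is written as a sum of such terms.
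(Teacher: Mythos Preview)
Your proposal is correct and follows essentially the same approach as the paper: the paper's proof simply cites the VG characteristic function $\Phi_{\PPP}(\bftheta) = (1+\tfrac12\|\bftheta\|_\Sigma^2)^{-1}$ from the literature, plugs it into $\Psi_{\bfZ}(\bftheta) = 2a(\Phi_{\PPP}(\bftheta)-1)$, and simplifies. Your only addition is the explicit derivation of $\Phi_{\PPP}$ by conditioning on $G(1)$, which you yourself note could equally be replaced by a citation.
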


\begin{proof}
    The characteristic function of $\PPP$ is $\Phi_{\PPP}(\bftheta) = (1+\frac 12 \|\bftheta\|_\Sigma^2)^{-1}$ by \cite[Equation (2.9)]{BKMS16}, and the characteristic exponent of the compound Poisson process is $\Psi_{\bfZ}(\bftheta) = 2a(\Phi_{\PPP}(\bftheta)-1) $, from which the result follows.  
\end{proof}

\subsection{Proofs for Section~\ref{sec4}}
\begin{proof}[Theorem~\ref{wvagouchar}]
    \textit{(i).} The stationary distribution is $\bfY\eqd \bfW(1)$, where $\bfW\sim WVAG^n(a,\bfalpha,\bfnull,\allowbreak\Sigma,\bfeta)$, so by \eqref{v-alpha-g-cf}, it has characteristic exponent
    \begin{align*}
        \Psi_{\bfY}(\bftheta) = \rmi \skal{\bfeta}{\bftheta}-a\log\left(1 + \frac{1}{2}\|\bftheta\|_{\bfalpha\tr \Sigma}^2 \right) - \sum_{k=1}^n \beta_k \log\left(1+\frac{1}{2}\alpha_k\Sigma_{kk}\theta_k^2 \right),
    \end{align*}
    $\bftheta\in\RR^n$. Let $c_0:= -a/(1 + \frac{1}{2}\|\bftheta\|_{\bfalpha\tr \Sigma}^2   ) $ and  $c_k:= -\beta_k/(1+\frac{1}{2}\alpha_k\Sigma_{kk}\theta_k^2) $, $k=1,\dots,n$, then taking the gradient gives
    \begin{align*}
        \nabla_{\bftheta}\Psi_{\bfY}(\bftheta)= \rmi\bfeta+ \frac{c_0}{2}\nabla_{\bftheta} \|\bftheta\|_{\bfalpha\tr \Sigma}^2 + (c_1\alpha_1\Sigma_{11}\theta_1,\dots ,c_n\alpha_n\Sigma_{nn}\theta_n).
    \end{align*}
    By Lemma \ref{conversionlem} (ii),  and noting $\skal{\nabla_{\bftheta} \|\bftheta\|_{\bfalpha\tr \Sigma}^2}{\bftheta} = 2\|\bftheta\|_{\bfalpha\tr \Sigma}^2$, the BDLP $\bfZ$ has characteristic exponent
    \begin{align*}
        \Psi_{\bfZ}(\bftheta) = \skal{\nabla_{\bftheta}\Psi_{\bfY}(\bftheta)}{\bftheta} = \rmi \skal{\bfeta}{\bftheta}  + c_0 \|\bftheta\|_{\bfalpha\tr \Sigma}^2 +\sum_{k=1}^n c_k\alpha_k\Sigma_{kk}\theta_k^2,
    \end{align*}
    which matches \eqref{wvagouz}, as required. Also, $\Psi_{\bfZ}(\bftheta)\to\bfnull$ as $\bftheta\to\bfnull$, as required.
    
    \textit{(ii).} Let $\bfZ_0\sim CP^n (2a,\PPP_{0})$, $\bfZ_k \sim CP^n(2\beta_k,\delta_0^{\otimes (k-1)}\otimes\PPP_{k}\otimes\delta_0^{\otimes (n-k)})$, $k=1,\dots,n$, be independent. Note that $\delta_0^{\otimes (k-1)}\otimes\PPP_{k}\otimes\delta_0^{\otimes (n-k)}$ is the probability law of $VG^n(1,\bfnull,\wt\Sigma_k)$, where the $(k,k)$ element of $\wt\Sigma_k\in\RR^{n\times n}$ is $\Sigma_{kk}$ and all other elements are 0. Then by Lemma~\ref{cpvglem},
    \begin{align}\label{decompz}
        \bfeta I + \sum_{k=0}^n \bfZ_k \eqd \bfZ
    \end{align}
    as the LHS has the same characteristic exponent as \eqref{wvagouz}.
    
    Now if $\bfC_k\sim CP^n (a_k,\wt\PPP_{k})$, $k=0,\dots,n$, are independent, their sum is $ \sum_{k=0}^n{\bfC_k}\sim CP^n\allowbreak (\sum_{k=0}^n a_k,\sum_{k=0}^n\frac{a_k}{\sum_{k=0}^n a_k} \wt\PPP_{k})$. Using this fact, $\bfZ$ is the compound Poisson process with drift in the statement of the result.
    
    \textit{(iii).} From \eqref{cez} and making the substitution $s = e^t$,
    \begin{align*}
        \Psi_{\bfZ^*(\Delta)}(\bftheta) = \int_{1}^{e^{\lambda\Delta}} \frac 1s \Psi_{\bfZ}(s\bftheta)\,\rmd s,\quad \bftheta\in\RR^n.
    \end{align*}
    Then using \eqref{wvagouz},
    \begin{align*}
        \Psi_{\bfZ^*(\Delta)}(\bftheta) = {}&\rmi \skal{\bfeta}{\bftheta}(e^{\lambda\Delta}-1) -a\|\bftheta\|_{\bfalpha\tr \Sigma}^2 \int_{1}^{e^{\lambda\Delta}} \frac{s}{1+\frac{1}{2}\|\bftheta\|_{\bfalpha\tr \Sigma}^2s^2}\,\rmd s \\
        {}& - \sum_{k=1}^n \beta_k\alpha_k\Sigma_{kk}\theta_k^2 \int_{1}^{e^{\lambda\Delta}} \frac{s}{1+\frac{1}{2}\alpha_k\Sigma_{kk}\theta_k^2s^2}\, \rmd s.
    \end{align*}
    Evaluating the above integrals using 
    \begin{align*}
        \int \frac{s}{1+cs^2} \, \rmd s= \frac{\log(1+cs^2)}{2c}+C,
    \end{align*}
    where $c>0$, gives the result in \eqref{wvagouzdelta}.
    
    \textit{(iv).} Note that $\bfZ$ is equal in law to a sum of independent compound Poisson processes and a drift as specified in \eqref{decompz}. Then applying Lemma \ref{cppzlem} gives the result. 
\end{proof}

\begin{proof}[Corollary~\ref{wvagoudenlem}]
    %
    Recall the representation of $\bfZ^*(\Delta)$ given in Theorem~\ref{wvagouchar} (iv) and the notation defined there. Then the probability law of $\bfZ^*(\Delta)$ can be written as $\PPP_{\bfZ^*(\Delta)}=p\PPP + p_1\PPP_1+p_2\PPP_2+p_0\PPP_0$, where $\PPP, \PPP_1,\PPP_2,\PPP_0$ are the conditional probability distributions of
    \begin{align}
        \bfzeta = {}&\bfZ^*(\Delta) \given \{N_0(\lambda\Delta)=0, \,N_1(\lambda\Delta)=0,\, N_2(\lambda\Delta)=0\}, \label{zstarcase1}\\
        \bfzeta+(\wt Z_1,0):={}& \bfZ^*(\Delta) \given \{ N_0(\lambda\Delta)=0,\, N_1(\lambda\Delta)>0,\, N_2(\lambda\Delta)=0\}, \label{zstarcase2}\\
        \bfzeta+( 0,\wt Z_2):={}& \bfZ^*(\Delta) \given \{ N_0(\lambda\Delta)=0,\, N_1(\lambda\Delta)=0, \,N_2(\lambda\Delta)>0\},\label{zstarcase3}\\
        \bfzeta+\wt\bfZ_0:={}& \bfZ^*(\Delta) \given \{ \text{$N_0(\lambda\Delta)=0$, $N_1(\lambda\Delta)>0$, $N_2(\lambda\Delta)>0$; or $N_0(\lambda\Delta)>0$}\},\label{zstarcase4}
    \end{align}
    respectively, and $p,p_1,p_2,p_0$ are the probabilities of the respective events being conditioned on.

    Put $U_1:=\{\bfzeta\}$, $U_2:=S_1$, $U_3 := S_2$, $U_4:=S_0$. Then the measures $\nu_1:=\bfdelta_{\bfzeta}$, $\nu_2:=\LLL\otimes \bfdelta_{\zeta_2}$, $\nu_3:=\bfdelta_{\zeta_1}\otimes \LLL$, $\nu_4:=\LLL^2$ on $\RR^2$ are mutually singular with $\nu_k(U_k^C)=0$ and $\nu_l(U_k)=0$ for all $k\neq l$ by Tonelli's theorem. Therefore, by \cite[Theorem 1]{GoRa08}, the density of $\PPP_{\bfZ^*(\Delta)}$ with respect to $\sum_{k=1}^4\nu_k$ is
    \begin{align*}
        f(\bfz) ={}& p \frac{\rmd \PPP}{\rmd \bfdelta_{\bfzeta}}(\bfz) \eins_{\{\bfzeta\}}(\bfz) + p_1 \frac{\rmd \PPP_1}{\rmd(  \LLL\otimes \bfdelta_{\zeta_2})}(\bfz)  \eins_{S_1}(\bfz) +p_2 \frac{\rmd \PPP_2}{\rmd(  \bfdelta_{\zeta_1}\otimes \LLL)}(\bfz)  \eins_{S_2}(\bfz)\\
        {}& +p_0 \frac{\rmd \PPP_0}{\rmd \LLL^2}(\bfz)   \eins_{S_0}(\bfz),\quad \bfz\in\RR^2.
    \end{align*}
    
    Now we compute the four Radon-Nikodym derivatives. Firstly, ${\rmd \PPP}/{\rmd \bfdelta_{\bfzeta}}(\bfzeta)\allowbreak = 1$. 
    
    
    Let $\Phi_1$ be the characteristic function of $\wt Z_1$, then the characteristic function of  $\sum_{k=1}^{N_1(\lambda\Delta)}\allowbreak e^{T_{1k}} V_{1k}$ can be written as
    \begin{align*}
        \bigg(\frac{1+ \frac 12\alpha_1\Sigma_{11}\theta_1^2e^{2\lambda\Delta}}{1+ \frac 12\alpha_1\Sigma_{11}\theta_1^2} \bigg)^{-\beta_1} ={}& \EE\left[e^{\rmi\bftheta_1{\sum_{k=1}^{N_1(\lambda\Delta)}e^{T_{1k}} V_{1k}}}\givenm N_1(\lambda\Delta)=0\right]\PP( N_1(\lambda\Delta)=0)\\ {}&+\EE\left[e^{\rmi\bftheta_1{\sum_{k=1}^{N_1(\lambda\Delta)}e^{T_{1k}} V_{1k}}}\givenm N_1(\lambda\Delta)>0\right]\PP( N_1(\lambda\Delta)>0)\\
        ={}&  e^{-2\beta_1\lambda\Delta}+ (1-e^{-2\beta_1\lambda\Delta})\Phi_1(\theta_1),\quad\theta_1\in\RR,
    \end{align*} 
    where the LHS comes from similar arguments as in the Proof of Theorem~\ref{wvagouchar} (iii)--(iv). Since $\alpha_1\Sigma_{11}>0$, \cite[Equation (2.10)]{BKMS16} implies that  the distribution of $V_{1k}\sim VG^1(1,0,\alpha_1\Sigma_{11})$ has a Lebesgue density, and consequently so does the distribution corresponding to $ \Phi_1$, which we denote by $f_1$. Hence, for Borel sets $A\subseteq\RR^2$,
    \begin{align*}
        \PPP_1(A) ={}& \PP(\bfZ^*(\Delta)\in A \given N_0(\lambda\Delta)=0, \,N_1(\lambda\Delta)>0, \,N_2(\lambda\Delta)=0) \\
        ={} & \int_{\RR^2}\eins_{A}(\bfz)f_1(z_1-\zeta_1) \,  \rmd z_1 \bfdelta_{\zeta_2}(\rmd z_2),
    \end{align*}
    Therefore, $ {\rmd \PPP_1}/{\rmd(  \LLL\otimes\bfdelta_{\zeta_2})}(\bfz) =f_1(z_1-\zeta_1)$. Similarly,  $ {\rmd \PPP_2}/{\rmd( \bfdelta_{\zeta_1}\otimes\LLL )}(\bfz) =f_2(z_2-\zeta_2)$.

    Finally, let $\Phi_0$ be the characteristic function of $\wt\bfZ_0$, then the characteristic function of $\bfZ^*(\Delta)-\bfzeta$ can be written as
    \begin{align*}
        e^{-\rmi\skal{\bftheta}{\bfzeta}}\Phi_{\bfZ^*(\Delta)}(\bftheta) = p  + p_1 \Phi_1(\theta_1)+ p_2 \Phi_2(\theta_2) + (1-p-p_1-p_2)\Phi_0(\bftheta), \quad \bftheta\in\RR^n, 
    \end{align*} 
    by noting \eqref{zstarcprep} and considering \eqref{zstarcase1}--\eqref{zstarcase4}. Now $|\bfalpha\tr \Sigma|\geq \alpha_1\alpha_2|\Sigma|>0 $ by Oppenheim's inequality and the invertibility of $\Sigma$, so \cite[Equation (2.10)]{BKMS16} implies $\bfV_{0k}\sim VG^2(1,\bfnull,\allowbreak\bfalpha\tr\Sigma)$ has a Lebesgue density. Combining this with the aforementioned fact that $V_{1k},V_{2k}$ also have Lebesgue densities, it follows that $ \Phi_0$ has a Lebesgue density, which we denote by $f_0$. Hence, for Borel sets $A\subseteq\RR^2$,
    \begin{align*}
        \PPP_0(A) ={}& \PP(\bfZ^*(\Delta)\in A \given \text{$N_0(\lambda\Delta)=0$, $N_1(\lambda\Delta)>0$, $N_2(\lambda\Delta)>0$; or $N_0(\lambda\Delta)>0$}
        ) \\
        ={} & \int_{\RR^2}\eins_{A}(\bfz)f_0(\bfz-\bfzeta) \,  \rmd\bfz.
    \end{align*}
    Therefore, $ {\rmd \PPP_0}/{\rmd\LLL^2}(\bfz) =f_0(\bfz-\bfzeta)$, which completes the proof.  
\end{proof}

\begin{proof}[Corollary~\ref{likwvagou}]
    This follows from Lemma \ref{markovllf} as $\bfX$ is in the appropriate form by Remark \ref{indepremark}. We check the parts of the lemma. The existence of the Lebesgue density $f_{\bfX(0)}$ of the stationary distribution $WVAG^2(a,\bfalpha,\bfnull,\Sigma,\bfeta)$ is due to \eqref{wvagpropb}, \cite[Equation (2.10)]{BKMS16} and the assumption that $\Sigma$ is invertible. For $k=1,\dots,m$,
    \begin{align}\label{mtrans}
        M_{\bfx_{k-1}}:\RR^n\to \RR^n,\quad\bfz\mapsto \bfx:= e^{-\lambda\Delta}(\bfx_{k-1}+\bfz)
    \end{align}
    has inverse $M_{\bfx_{k-1}}^{-1}(\bfx) = e^{\lambda\Delta}\bfx-\bfx_{k-1}$, and we take $f_k:= f_{\bfZ^*(\Delta)}$ given in \eqref{zstarden}, and $\nu_k := \nu$ given in \eqref{dommeas}. Also, take $g_{\bfvtheta}\equiv1$, then the dominating measure $\nu^*$ does not depend on the parameter vector $\bfvtheta$ but may depend on the known parameters $\lambda$ and $\bfeta$.  Thus, \eqref{genlik} becomes \eqref{wvagoulik}. 
\end{proof}

\begin{proof}[Proposition~\ref{ouwvagprop}]
    \textit{(i).} The L\'evy measure of $\bfZ$ is given in \cite[Equation (2.20)]{BLM17a}. Let $\bfB^{(\bfalpha)}\sim BM^n(\bfalpha\tr\bfmu,\bfalpha\tr\Sigma)$, $B_k\sim BM^1(\mu_k,\Sigma_{kk})$, $k=1,\dots,n$, and $\GGG_{a,b}$ be the L\'evy measure of the gamma subordinator $\Gamma_S(a,b)$. Consequently, the LHS of the log moment condition \eqref{logcond} becomes
    \begin{align*}
        I_{\log}:={}&\int_{(0,\infty)} \EE[f(\|\bfB^{(\bfalpha)}(g)\|)]\,\GGG_{a,1}(\rmd g) \\
        &+ \sum_{k=1}^n \int_{(0,\infty)}\EE[f(\|B_k(g)\bfe_k\|)]\,\GGG_{\beta_k,1/\alpha_k}(\rmd g)\\
        \le{}& \int_{(0,\infty)} \EE[\|\bfB^{(\bfalpha)}(g)\|^2]\,\GGG_{a,1}(\rmd g) + \sum_{k=1}^n \int_{(0,\infty)}\EE[B_k(g)^2]\,\GGG_{\beta_k,1/\alpha_k}(\rmd g),
    \end{align*}
    where we have used $f(x):= \eins_{(2,\infty)}(x)\log(x)\le x^2$, $x>0$.
    
    The expectation can be evaluated, for example using \cite[Corollory 3.2b.1]{mp92}, giving $\EE[\|\bfB^{(\bfalpha)}(g)\|^2]= \|\bfalpha\tr\bfmu\|^2g^2 + \spur(\bfalpha\tr\Sigma)g$, so
    \begin{align}
        \int_{(0,\infty)}\EE[\|\bfB^{(\bfalpha)}(g)\|^2]\GGG_{a,1}(\rmd g) = \int_{0}^\infty  (a+bg)e^{-g}\rmd g <\infty.
    \end{align}
    for some constants $a,b\geq0$.
    
    Likewise, $\int_{(0,\infty)}\EE[B_k(g)^2]\,\GGG_{\beta_k,1/\bfalpha_k}(\rmd g)<\infty$, $k=1,\dots,n$. Hence, $I_{\log}<\infty$, as required.
    
    \textit{(ii).} Without loss of generality, assume $\bfeta=\bfnull$. Let $\bfV_{0}$, $V_k$, $k=1,\dots,n$, be the independent VG processes defined in \eqref{wvagpropb}. As $\bfZ$ decomposes into a sum of these processes, from \eqref{cez}, we have
    \begin{align*}
        \Psi_{\bfZ^*(\Delta)}(\bftheta) =\int_{0}^{\lambda\Delta} \Psi_{\bfV_0}(e^t\bftheta)\,\rmd t  + \sum_{k=1}^n\int_{0}^{\lambda\Delta} \Psi_{V_k}(e^t\theta_k)\,\rmd t,\quad \bftheta\in\RR^n,
    \end{align*}
    So letting $\bfV_{0}^*(\Delta)$, $V_k^*(\Delta)$ be the independent random variables with characteristic exponents $\bftheta\mapsto\int_{0}^{\lambda\Delta} \Psi_{\bfV_0}(e^t\bftheta)\,\rmd t$, $\theta\mapsto\int_{0}^{\lambda\Delta} \Psi_{V_k}(e^t\theta)\,\rmd t$, respectively, gives $\bfZ^*(\Delta)\eqd\bfV_{0}^*(\Delta)  + \sum_{k=1}^n V_k^*(\Delta)\bfe_k$.
    
    For each $k=1,\dots,n$, $V_k \eqd G_+ - G_-$ for some independent gamma subordinators $G_+, G_-$ (see \cite[Equation (8)]{MCC98}). Like the above argument, we can further decompose $V_k^*(\Delta)\eqd G_+^*(\Delta) -   G_-^*(\Delta)$, where $G_+^*(\Delta), G_-^*(\Delta)$ are the random variables  with characteristic exponents $\theta\mapsto\int_{0}^{\lambda\Delta} \Psi_{G_+}(e^t\theta)\,\rmd t$, $\theta\mapsto\int_{0}^{\lambda\Delta} \Psi_{G_-}(e^t\theta)\,\rmd t$, respectively. By \cite[Equation (4.10)]{QDZ19}, $G_+^*(\Delta)$ and $G_-^*(\Delta)$ are absolutely continuous. This implies, in succession, that $V_k^*(\Delta)$, $\sum_{k=1}^n V_k^*(\Delta)\bfe_k$, and $\bfZ^*(\Delta)$ are absolutely continuous.
    
\end{proof}

\begin{remark}\label{vgdecomp}
    We can be more explicit in the above proof. For $V_k\sim VG^1(1/\alpha_k, \mu_k,\Sigma_{kk})$, we have $G_+\sim \Gamma_S(1/\alpha_k,b_+)$, $G_-\sim \Gamma_S(1/\alpha_k,b_-)$, where $b_{\pm} := (2/\alpha_k)/((\mu_k^2 + 2\Sigma_{kk}/\alpha_k)^{1/2}\pm\mu_k)$. For $G\sim \Gamma_S(a,b)$, the random variable $G^*(\Delta)$ with characteristic exponent $\theta\mapsto\int_{0}^{\lambda\Delta} \allowbreak\Psi_{G}(e^t\theta)\,\rmd t$ satisfies $G^*(\Delta) \eqd e^{\lambda\Delta}(\wt G + \wt C)$, where $\wt G\sim \Gamma(a\lambda\Delta, be^{\lambda\Delta})$, $\wt C\sim  CP^1(a\lambda^2\Delta^2/2,\allowbreak \PPP_J)$, $\PPP_J$ is the probability law such that $J\given U \sim \operatorname{Exp}(be^{\lambda\Delta U})$, $U\sim \operatorname{Uni}(0,1)$. Of course, seeing the gamma random variable $\wt G$ is enough to conclude that $G^*(\Delta)$ is absolutely continuous.\qed
\end{remark}

\subsection{Proofs for Section \ref{sec3}} \label{subsecpf1}

\begin{proof}[Proposition \ref{likprop}] \label{proofcheck}
    This follows from Lemma \ref{markovllf} as $\bfX$ is in the appropriate form by Remark \ref{indepremark}. We check the parts of the lemma. The nondegenerate stationary distribution $\bfX(0)\sim SD^n$ is necessarily absolutely continuous (see \cite[Theorem 27.13]{sato99}). For $k=1,\dots,m$, we take $M_{\bfx_{k-1}}$ given in \eqref{mtrans}, $f_k := f_{\bfZ^*(\Delta)}$ is the Lebesgue density of $\bfZ^*(\Delta)$ which exists by assumption, and $\nu_k:=\LLL$. The transformation theorem for the Lebesgue measure (see \cite[Theorem 19.4]{Ba01}) implies $(\nu_k\circ M_{\bfx_{k-1}}^{-1})(\rmd \bfx_k)= |J(\bfx_k)|\rmd\bfx_k$, where $J(\bfx_k)= e^{n\lambda\Delta}$ is the Jacobian determinant of $M_{\bfx_{k-1}}^{-1}$ at $\bfx_k$, so we can take $g_{\bfvtheta}(\bfx_0,\dots,\bfx_m) :=  e^{mn\lambda\Delta}$ and $\nu^* := \LLL^{m+1}$. Thus, \eqref{genlik} becomes \eqref{likfn}.
    
\end{proof}

\begin{proof}[Proposition~\ref{zconvglem1}]
    {\it (i).}  
    Recall the simple function  $H_{\wt\Delta}$ in \eqref{simplefn} with $\wt\Delta>0$ and $\wt n = \lfloor  \lambda\Delta/\wt\Delta  \rfloor$.  Then for all $s\in[0,\lambda\Delta]$, $H_{\wt\Delta}(s)\to e^s$ pointwisely as $\wt\Delta \to 0$. In addition, $H_{\wt\Delta}(s)\leq e^{\lambda\Delta}$ for all $s\in[0,\lambda\Delta]$, so by the $L^2$ dominated convergence theorem,
    \begin{align*}
        \lim_{\wt\Delta \to 0}\int_0^{\lambda\Delta}|H_{\wt\Delta}(s)-e^s|^2\,\rmd s=0.
    \end{align*}
    Thus, by the definition of the stochastic integral (see \cite[Theorem 2.1]{saya}), we have
    \begin{align}\label{convgz}
        \int_0^{\lambda\Delta} H_{\wt\Delta}(s)\,\rmd\bfZ(s) \stp \int_0^{\lambda\Delta}e^s\,\rmd\bfZ(s)=\bfZ^*(\Delta)
    \end{align}
    as $\wt\Delta \to 0$.
    
    On the other hand, by the definition of the stochastic integral for simple functions (see \cite[Equation (2.4)]{saya}), we have
    \begin{align*}
        \bfZ^*_{\wt\Delta}(\Delta) = \int_0^{\lambda\Delta} H_{\wt\Delta}(s)\,\rmd\bfZ(s) - \bfZ(\wt\Delta) \stp \bfZ^*(\Delta)
    \end{align*}
    as $\wt\Delta \to 0$, where the convergence follows from \eqref{convgz}, $\bfZ(\wt\Delta)\stp 0$ by continuity in probability, and combining these facts using the continuous mapping theorem.
    
    {\it (ii).} Since $(\bfZ^*_{\wt\Delta}(\Delta)^{(k)})_{k=1,\dots,m}$ and $(\bfZ^*(\Delta)^{(k)})_{k=1,\dots,m}$ are iid copies of $\bfZ^*_{\wt\Delta}(\Delta)$ and $\bfZ^*(\Delta)$, respectively, applying Part (i) and the L\'evy continuity theorem yields
    \begin{align}\label{vconvg}
        \hspace{-.01em}\left(\bfX(0),\bfZ^*_{\wt\Delta}(\Delta)^{(1)},\dots,\bfZ^*_{\wt\Delta}(\Delta)^{(m)}\right) \std{}& \left(\bfX(0),\bfZ^*(\Delta)^{(1)},\dots,\bfZ^*(\Delta)^{(m)}\right)
    \end{align}
    as $\wt\Delta \to 0$. Let $g$ be the continuous function such that $g(\bfX(0),\bfZ^*(\Delta)^{(1)},\dots,\allowbreak\bfZ^*(\Delta)^{(m)})=(\bfX(0), \bfX(t_1),\dots ,\bfX(t_m))$ in accordance with \eqref{obsx}. Then applying the continuous mapping theorem to \eqref{vconvg} with the function $g$ completes the proof. 
\end{proof}


\begin{proof}[Lemma \ref{momprop}]
    \textit{(i).} These moment formulas are obtained by differentiating the characteristic function of $\bfZ^*(\Delta)$ given by \eqref{cez}. We show the calculation of \eqref{mom1}.

    Without loss of generality we assume $n=1$, so $Z_1\sim L^1(\mu_1,\Sigma_{11},\ZZZ_1)$. For all $\theta_1\in\RR$, $|\rmi x(e^{\rmi \theta_1 x}-1)|\leq |x|^2$ for $x\in\DD$ and $|\rmi xe^{\rmi \theta_1 x}| \leq |x|$ for $x\in\DD^C$. Now, noting that $\int_{\DD}|x|^2\,\ZZZ_1(\rmd x)<\infty$ as $\ZZZ_1$ is a L\'evy measure and $\int_{\DD^C}|x|\,\ZZZ_1(\rmd x)<\infty$ as $Z_1$ is assumed to have a finite first moment (see \cite[Example 25.12]{sato99}), differentiating \eqref{lkform} using the dominated convergence theorem gives
    \begin{align*}
        \Psi_{Z_1}'(\theta_1) = \rmi \mu_1 -  \Sigma_{11}\theta_1 +\int_{\DD} \rmi x(e^{\rmi \theta_1 x}-1)\,\ZZZ_1(\rmd x)+ \int_{\DD^C} \rmi xe^{\rmi \theta_1 x}\,\ZZZ_1(\rmd x).
    \end{align*}
    Thus, for $\theta_1$ in a neighbourhood of 0, there exist constants $a,b>0$, such that $t\mapsto |\partial_{\theta_1} \allowbreak(\Psi_{Z_1}(e^t \theta_1))| \leq ae^t +be^{2t}$ on $t\in[0,\lambda\Delta]$, so that \eqref{cez} can also be differentiated using the dominated convergence theorem, giving
    \begin{align*}
        \partial_{\theta_1} \int_{0}^{\lambda\Delta} \Psi_{Z_1}(e^t \theta_1)\,\rmd t = \int_{0}^{\lambda\Delta} \Psi_{Z_1}'(e^t \theta_1)e^t\,\rmd t.
    \end{align*}
    Thus,
    \begin{align*}
        \EE[Z^*_1(\Delta)] ={}& \frac{1}{\rmi} \partial_{\theta_1} \Phi_{Z^*_1(\Delta)}(\theta_1) \left.\right|_{\theta_1=0} \\
        ={}& \frac{1}{\rmi}  \Phi_{Z^*_1(\Delta)}(0) \int_{0}^{\lambda\Delta} \Psi_{Z_1}'(0)e^t\,\rmd t\\
        ={}&\int_{0}^{\lambda\Delta} \EE[Z_1]e^t\,\rmd t\\
        ={}&(e^{\lambda\Delta}-1)\EE[Z_1],
    \end{align*}
    as required. 
    
    Using a similar method, the higher order moments follow.
    
    \textit{(ii).} This follows immediately from \cite[Propsotion 2.6]{mas04}. 
\end{proof}

\begin{proof} [Corollary~\ref{wvagoumom}]
    By Lemma \ref{momprop} (i), this reduces to finding the corresponding moments of the BLDP $\bfZ=(Z_1,\dots,Z_n)$ given in Theorem~\ref{wvagouchar} (ii) and we recall the notation there. Then by \cite[Equation (25.8)]{sato99}, $\myCov(Z_k,Z_l) = b \EE[J_kJ_l] = 2a\EE[V_kV_l]$, $k\neq l$, where $\bfJ=(J_1,\dots,J_n)$ is the random vector with probability law $\PPP$, and $\bfV=(V_1,\dots,V_n)\sim VG^n(1,\bfnull,\bfalpha\tr\Sigma)$. Next, the moments of $Z_k\sim CP^1(2/\alpha_k,\PPP_{V_k},\allowbreak\eta_k)$ can be written in terms of the moments of $V_k$ (for example, \cite[Section 5]{GT06}). Finally, the moments of $\bfV$ can be determined by combining \cite[Remark 4, Appendix A.1, Figure 2]{MiSz17}.
    
\end{proof}

\appendix
\section{The Connection Between LDOUPs and Self-De\-composability}\label{append}

In this appendix, we review the well-established connection between LDOUPs and self-decomposability.

All self-decomposable distributions are infinitely divisible and there is a one-to-one correspondence between the stationary solutions of LODUPs and self-decomposable distributions, which we summarise in the following lemma (see \cite[Theorems 17.5 and 17.11]{sato99}).

\begin{lemma}\label{statsolnlem} Fix $\lambda >0$ and let $\bfX$ be the LDOUP given by \eqref{ousoln} with BDLP $\bfZ\sim L^n(\bfmu,\Sigma,\ZZZ)$.
    \begin{enumerate}[(i)]
        \item[(i)] For all $\bfZ\sim L^n(\bfmu,\Sigma,\ZZZ)$ satisfying \eqref{logcond}, there exists a $\bfY\sim SD^n$ such that $\bfX$ has stationary distribution $\bfY$.
        \item[(ii)] For all $\bfY\sim SD^n$, there exists a $\bfZ\sim L^n(\bfmu,\Sigma,\ZZZ)$ satisfying \eqref{logcond}, unique in law, such that $\bfX$ has stationary distribution $\bfY$.
        \item[(iii)] If $\bfZ\sim L^n(\bfmu,\Sigma,\ZZZ)$ does not satisfy \eqref{logcond}, then $\bfX$ has no stationary distribution.
    \end{enumerate}
\end{lemma}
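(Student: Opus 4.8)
The plan is to deduce the lemma from the classical correspondence between Ornstein--Uhlenbeck-type processes and self-decomposable laws (\cite[Theorems 17.5 and 17.11]{sato99}), adapted to the normalisation in \eqref{ousoln}. First I would record the reduction: by \eqref{ousoln}, the LDOUP $\bfX$ with $\bfX(0)\eqd\bfY$ is stationary if and only if $\bfY\eqd e^{-\lambda t}\bfY+e^{-\lambda t}\int_0^t e^{\lambda s}\,\rmd\bfZ(\lambda s)$ for every $t\ge0$, the integral term being independent of $\bfY$; writing $b=e^{-\lambda t}\in(0,1)$ this is exactly \eqref{sddefn}, so any stationary law is necessarily self-decomposable, and it must coincide with the law of the improper stochastic integral $\int_0^\infty e^{-u}\,\rmd\bfZ(u)$ (after the time change $u=\lambda s$ and using the freedom noted before \eqref{ousoln} to absorb $\lambda$) whenever that integral converges.

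The technical core is the equivalence: $\int_0^t e^{-u}\,\rmd\bfZ(u)$ converges almost surely as $t\to\infty$ if and only if \eqref{logcond} holds, in which case the limit $\bfY$ has characteristic function $\Phi_\bfY(\bftheta)=\exp\bigl(\int_0^\infty\Psi_\bfZ(e^{-u}\bftheta)\,\rmd u\bigr)$. Using the L\'evy--Khintchine form \eqref{lkform}, the drift and Gaussian parts of $u\mapsto\Psi_\bfZ(e^{-u}\bftheta)$ are trivially integrable on $(0,\infty)$; for the jump part I would apply Fubini and split $\XXX$ at $\{\|\bfx\|\le 2\}$: on the bounded part the integrand is $O(e^{-2u}\|\bfx\|^2)$, hence integrable since $\XXX$ is a L\'evy measure, while on $\{\|\bfx\|>2\}$ the bound $|e^{\rmi e^{-u}\skal{\bftheta}{\bfx}}-1|\le 2\wedge e^{-u}|\skal{\bftheta}{\bfx}|$, integrated in $u$ first, produces a weight comparable to $1+\log\|\bfx\|$, which is $\XXX$-integrable precisely under \eqref{logcond}. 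Upgrading integrability of the characteristic function to almost sure convergence of the improper integral uses the independent-increments (martingale) structure, or one may quote \cite{saya}.

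Given \eqref{logcond}, the candidate $\bfY$ is then well defined, and self-decomposability is immediate from $\Phi_\bfY(\bftheta)/\Phi_\bfY(b\bftheta)=\exp\bigl(\int_0^{-\log b}\Psi_\bfZ(e^{-u}\bftheta)\,\rmd u\bigr)$, which is an infinitely divisible characteristic function; this proves (i). For (ii), starting from $\bfY\sim SD^n$ I would use that $u\mapsto\Psi_\bfY(e^{-u}\bftheta)$ is absolutely continuous and define $\Psi_\bfZ(\bftheta):=\skal{\nabla_{\bftheta}\Psi_\bfY(\bftheta)}{\bftheta}$ as in Lemma \ref{conversionlem}; the radial monotonicity of the L\'evy measure of a self-decomposable law is exactly what makes $\Psi_\bfZ$ again of L\'evy--Khintchine type, i.e.\ a genuine BDLP, and then $\Psi_\bfY(\bftheta)=\int_0^\infty\Psi_\bfZ(e^{-u}\bftheta)\,\rmd u$ identifies $\bfY$ as the stationary law, with \eqref{logcond} holding by the equivalence above. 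Uniqueness in law follows since two admissible BDLPs with the same stationary law satisfy $\int_0^\infty\Psi_{\bfZ^{(1)}}(e^{-u}\bftheta)\,\rmd u=\int_0^\infty\Psi_{\bfZ^{(2)}}(e^{-u}\bftheta)\,\rmd u$ for all $\bftheta$, and differentiating $r\mapsto\int_0^\infty\Psi_{\bfZ^{(j)}}(e^{-u}r\bftheta)\,\rmd u$ at $r=1$ forces $\Psi_{\bfZ^{(1)}}=\Psi_{\bfZ^{(2)}}$.

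Finally, for (iii): if \eqref{logcond} fails, then by the reduction any stationary law would be self-decomposable, hence by (ii) would possess a law-unique associated BDLP satisfying \eqref{logcond}; as the given $\bfZ$ does not, no stationary law exists. (Equivalently, stationarity would force $e^{-\lambda t}\int_0^t e^{\lambda s}\,\rmd\bfZ(\lambda s)$ to converge in distribution, contradicting the divergence established above.) The only genuinely non-routine point is the convergence-versus-\eqref{logcond} equivalence in the second paragraph: extracting the $\log\|\bfx\|$ weight from the $u$-integral of the jump part needs a careful two-regime estimate, and passing from the characteristic function to almost sure convergence needs the increment structure; everything else is bookkeeping with \eqref{lkform}.
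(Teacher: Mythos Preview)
Your proposal is a correct outline of the classical argument, essentially reproducing the proofs of \cite[Theorems 17.5 and 17.11]{sato99}. However, the paper does not supply its own proof of this lemma at all: it is stated in the appendix purely as a review result, with the parenthetical citation ``(see \cite[Theorems 17.5 and 17.11]{sato99})'' serving in lieu of proof. So there is nothing in the paper to compare your argument against beyond the same reference you already invoke in your first sentence; you have simply unpacked what the paper is content to cite.
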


Furthermore, it is possible to convert between the characteristic exponents of the stationary distribution $\bfY$ and the BDLP $\bfZ$ using the next result (see \cite[Theorem 17.5]{sato99} for (i) and \cite[Lemma 2.5]{mas04} for (ii)).
\begin{lemma}\label{conversionlem}
    Fix $\lambda >0$ and let $\bfX$ be the LDOUP given by \eqref{ousoln}.
    \begin{enumerate}
        \item[(i)] Let $\bfZ\sim L^n(\bfmu,\Sigma,\ZZZ)$ satisfying \eqref{logcond} be the BDLP of $\bfX$, then the stationary distribution $\bfY$ has characteristic exponent \eqref{cey}.
        
        \item[(ii)] Let $\bfY\sim SD^n$ be the stationary distribution of $\bfX$. Suppose $\Psi_{\bfY}$ is differentiable for all $\bftheta\neq \bfnull$ and $\skal{\nabla_{\bftheta} \Psi_{\bfY} (\bftheta)}{\allowbreak\bftheta}\to\bfnull$ as $\bftheta \to\bfnull$, then the BDLP $\bfZ$ has characteristic exponent
        \begin{align*}
            \Psi_\bfZ(\bftheta) =  \skal{\nabla_{\bftheta} \Psi_{\bfY} (\bftheta)}{\bftheta},\quad \bftheta\in\RR^n.    
        \end{align*}
    \end{enumerate}
\end{lemma}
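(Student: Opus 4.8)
\emph{Proof proposal.} The plan is to obtain (i) from the stationarity identity together with the standard formula for the characteristic function of a stochastic integral against a L\'evy process, and then to deduce (ii) by differentiating the integral representation from (i) along a ray through the origin.

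For (i), write the solution \eqref{ousoln} and substitute $u=\lambda s$ in the stochastic integral to get $\bfX(t)=e^{-\lambda t}(\bfX(0)+\bfZ^*(t))$, where $\bfZ^*(t)=\int_0^{\lambda t}e^{u}\,\rmd\bfZ(u)$ is independent of $\bfX(0)\eqd\bfY$. Approximating the integrand $e^{u}$ by simple functions on $[0,\lambda t]$ exactly as in the proof of Lemma~\ref{cppzlem} (and using continuity in probability of the stochastic integral), one has $\Phi_{\bfZ^*(t)}(\bftheta)=\exp(\int_0^{\lambda t}\Psi_{\bfZ}(e^{u}\bftheta)\,\rmd u)$. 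Stationarity $\bfX(t)\eqd\bfY$ together with independence then gives $\Phi_{\bfY}(\bftheta)=\Phi_{\bfY}(e^{-\lambda t}\bftheta)\,\Phi_{\bfZ^*(t)}(e^{-\lambda t}\bftheta)$ for all $t\ge0$, and the substitution $w=\lambda t-u$ turns the exponent of the second factor into $\int_0^{\lambda t}\Psi_{\bfZ}(e^{-w}\bftheta)\,\rmd w$. Since $\bfY$ is infinitely divisible, $\Phi_{\bfY}$ has no zeros; letting $t\to\infty$, continuity of $\Phi_{\bfY}$ forces $\Phi_{\bfY}(e^{-\lambda t}\bftheta)\to1$, so $\int_0^{\lambda t}\Psi_{\bfZ}(e^{-w}\bftheta)\,\rmd w$ must converge, and its limit $\int_0^{\infty}\Psi_{\bfZ}(e^{-w}\bftheta)\,\rmd w$ agrees with $\Psi_{\bfY}(\bftheta)$ (both are continuous in $\bftheta$ and vanish at $\bfnull$, so the difference, which lies in $2\pi\rmi\ZZ$, is identically $0$); this is \eqref{cey}.

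For (ii), fix $\bftheta\neq\bfnull$ and set $h(r):=\Psi_{\bfY}(r\bftheta)$ for $r>0$. By (i) and the substitution $w=re^{-u}$, $h(r)=\int_0^{r}\Psi_{\bfZ}(w\bftheta)\,\rmd w/w$; the integrand is continuous on $(0,\infty)$ and the improper integral at $0$ converges (the case $r=1$ is the convergence established in (i)), so $h$ is continuously differentiable on $(0,\infty)$ with $h'(r)=\Psi_{\bfZ}(r\bftheta)/r$. On the other hand, the hypothesis that $\Psi_{\bfY}$ is differentiable away from $\bfnull$ and the chain rule give $h'(r)=\skal{\nabla_{\bftheta}\Psi_{\bfY}(r\bftheta)}{\bftheta}$. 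Equating the two expressions and setting $r=1$ yields $\Psi_{\bfZ}(\bftheta)=\skal{\nabla_{\bftheta}\Psi_{\bfY}(\bftheta)}{\bftheta}$ for every $\bftheta\neq\bfnull$, and the value at $\bftheta=\bfnull$ is pinned down by $\Psi_{\bfZ}(\bfnull)=0$ together with the hypothesis $\skal{\nabla_{\bftheta}\Psi_{\bfY}(\bftheta)}{\bftheta}\to\bfnull$. (Existence and uniqueness in law of the BDLP for the prescribed stationary distribution are provided by Lemma~\ref{statsolnlem}(ii); the argument above only serves to identify its characteristic exponent.)

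The step I expect to be the main obstacle is justifying, in (i), that $w\mapsto\Psi_{\bfZ}(e^{-w}\bftheta)$ is absolutely integrable on $(0,\infty)$ under the log moment condition \eqref{logcond} --- equivalently, that $w\mapsto\Psi_{\bfZ}(w\bftheta)/w$ is integrable near $0$. The Gaussian and small-jump parts of $\Psi_{\bfZ}$ contribute an $O(\|\bftheta\|^{2}w^{2})$ term, which is harmless; for the large jumps one bounds $|e^{\rmi\skal{w\bftheta}{\bfx}}-1|\le\min(2,w\|\bftheta\|\,\|\bfx\|)$ and, by Tonelli's theorem, checks that the $w$-integral over $(0,1)$ of $w^{-1}\min(2,w\|\bftheta\|\,\|\bfx\|)$ is $O(1+\log\|\bfx\|)$ for large $\|\bfx\|$, so finiteness reduces precisely to $\int_{(2\DD)^{C}}\log\|\bfx\|\,\ZZZ(\rmd\bfx)<\infty$. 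This is the classical existence argument for Ornstein-Uhlenbeck-type stationary laws, and it may be cited from \cite{saya} or \cite[Ch.~17]{sato99}; it is the only point that is not a routine manipulation.
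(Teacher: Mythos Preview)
The paper does not prove this lemma: it is stated in the appendix as a known result, with part (i) cited from \cite[Theorem 17.5]{sato99} and part (ii) from \cite[Lemma 2.5]{mas04}. Your proposal supplies a correct self-contained argument that is precisely the classical route those references take: derive the integral representation \eqref{cey} from the stationarity identity and the stochastic-integral characteristic function, then recover $\Psi_{\bfZ}$ by differentiating $r\mapsto\Psi_{\bfY}(r\bftheta)$ along rays. The handling of the branch ambiguity in (i) and the appeal to Lemma~\ref{statsolnlem}(ii) to justify using (i) inside the proof of (ii) are both appropriate; your closing paragraph correctly isolates the one nontrivial analytic point (integrability of $w\mapsto\Psi_{\bfZ}(e^{-w}\bftheta)$ near infinity, equivalently of $w^{-1}\Psi_{\bfZ}(w\bftheta)$ near zero) and reduces it to the log moment condition, which is exactly where \eqref{logcond} enters in Sato's proof.
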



The next lemma is about $\bfZ^*(\Delta)$ defined in \eqref{zstar}, it is implied by \cite[Theorem 2.2]{saya} and Kac's theorem. It explains why the observations of a LDOUP form a AR(1) process, and why the stationary solution of a LDOUP gives rise to self-decomposable distributions.

\begin{lemma}\label{cor1} Let $\Delta>0$ and $\bfZ\sim L^n$. For $t_0=0,t_1=\Delta , \dots,t_m= m\Delta$,
    \begin{align*}
        \Bigg( {\int_{t_{k-1}}^{t_k} e^{\lambda s}\,\rmd\bfZ(\lambda s)}\Bigg)_{k=1,\dots,m}
    \end{align*}
    is an iid sequence equal in distribution to $\bfZ^*(\Delta)$, which has characteristic exponent \eqref{cez}.
\end{lemma}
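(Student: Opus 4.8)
The plan is to build the iid structure directly from the stationarity and independence of the increments of $\bfZ$, and then to obtain the characteristic exponent from \cite[Theorem 2.2]{saya}. Accordingly I would split the claim into three parts: (a) each term has the law of $\bfZ^*(\Delta)$; (b) the $m$ terms are mutually independent; (c) $\bfZ^*(\Delta)$ has characteristic exponent \eqref{cez}.

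For (a), I would first pass to the ``$\bfZ$-clock'' via the change of variables $u = \lambda s$, rewriting the $k$-th term as a stochastic integral of the deterministic integrand $u \mapsto e^{u}$ over the window $[\lambda t_{k-1}, \lambda t_k]$; for $k = 1$ this is literally $\bfZ^*(\Delta) = \int_0^{\lambda\Delta} e^{u}\,\rmd\bfZ(u)$ of \eqref{zstar}. For general $k$, set $\bfZ_{(k)}(v) := \bfZ(\lambda t_{k-1} + v) - \bfZ(\lambda t_{k-1})$. Since the stochastic integral of a deterministic function against a L\'evy process is, from the simple-function construction of \cite[Section 2]{saya}, a measurable functional of the driving path --- and in particular, over $[\lambda t_{k-1}, \lambda t_k]$, a functional of the increments of $\bfZ$ on that window alone --- a time-shift expresses the $k$-th term through $\int_0^{\lambda\Delta} e^{v}\,\rmd\bfZ_{(k)}(v)$, which by the stationarity of increments $\bfZ_{(k)} \eqd \bfZ$ has the same law as $\bfZ^*(\Delta)$.

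For (b), the processes $\bfZ_{(1)}, \dots, \bfZ_{(m)}$ are built from the increments of $\bfZ$ over the pairwise disjoint intervals $[\lambda t_0, \lambda t_1], \dots, [\lambda t_{m-1}, \lambda t_m]$, which are independent since $\bfZ$ has independent increments; hence the joint characteristic function of the $m$ terms factorises into the product of the marginals, and Kac's theorem (factorisation of the joint characteristic function implies independence) yields mutual independence, so the sequence is iid. For (c), I would apply \cite[Theorem 2.2]{saya} to the deterministic integrand $s \mapsto e^{s}$ on $[0, \lambda\Delta]$, which gives $\EE[e^{\rmi\skal{\bftheta}{\bfZ^*(\Delta)}}] = \exp\left(\int_0^{\lambda\Delta} \Psi_{\bfZ}(e^{s}\bftheta)\,\rmd s\right)$, that is, the characteristic exponent \eqref{cez}.

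The main obstacle is bookkeeping rather than conceptual: one must make precise the two ``routine'' facts about stochastic integrals of deterministic functions against a L\'evy process used above --- that such an integral over a time interval is a measurable functional of the increments of $\bfZ$ on that interval, and the change-of-variables and time-shift identities for these integrals. Both follow from the simple-function approximation in \cite{saya} and a limiting argument of the kind already used in the proofs of Lemma~\ref{cppzlem} and Proposition~\ref{zconvglem1}; once they are in hand, (b) via Kac's theorem and (c) via \cite[Theorem 2.2]{saya} are immediate, which is why the lemma can be presented as a direct consequence of those two results.
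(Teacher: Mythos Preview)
Your proposal is correct and follows precisely the route the paper indicates: the paper does not give a detailed proof but simply remarks that the lemma ``is implied by \cite[Theorem 2.2]{saya} and Kac's theorem,'' which is exactly your (c) and (b), with (a) being the standard stationarity-of-increments step you spell out. Your closing paragraph already anticipates this.
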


\begin{remark}\label{indepremark}
    Applying \eqref{ousoln} at the times $t=t_0,t_1,\dots,t_m$, we have
    \begin{align}\label{discXeqn}
        \bfX(t_{k})= b\bfX(t_{k-1})+\bfZ_{b}^{(k)},\quad k=1,\dots,m,
    \end{align}
    where $b=e^{-\lambda\Delta}$ and $\bfZ_{b}^{(k)}=e^{-\lambda \Delta} \int_{t_{k-1}}^{t_k} e^{\lambda s}\,\rmd\bfZ(\lambda s)$. Now by Lemma \ref{cor1}, $\bfZ_{b}^{(k)}\eqd e^{-\lambda\Delta}\bfZ^*(\Delta)$, $k=1,\dots,m$, are iid, and $\bfX(t_{k-1})$, being a function of $\bfX_0,\bfZ_{b}^{(1)},\dots, \bfZ_{b}^{(k-1)}$ only, is independent of $\bfZ_{b}^{(k)}$. \qed
\end{remark}

From \eqref{discXeqn}, the observations $\bfX(0),\dots,\bfX(t_m)$ follow an AR(1) process with innovation terms $\bfZ_{b}^{(k)}$, $k=1,\dots,m$. With a minor abuse of terminology, we call $\bfZ^*(\Delta)$ the innovation term. As noted in \cite[Sections 3 and 5]{bnjs}, \eqref{discXeqn} satisfies \eqref{sddefn} with stationary distribution $\bfY\eqd \bfX(t_k)\eqd \bfX(t_{k-1})$, $b=e^{-\lambda\Delta}$ and $\bfZ_b=\bfZ_{b}^{(k)}$. This demonstrates the connection between the stationary distribution of a LDOUP and self-decomposability.

\subsection*{Acknowledgments}

This research was partially supported by ARC grant DP160104737.The author thanks Boris Buchmann for discussions and suggestions, and two anonymous referees for their helpful comments and suggestions. The author also thanks Liwei Cao for work on part of the code.


\subsection*{Conflict of interest}
The author declares that they have no conflict of interest.

\subsection*{Data availability statement}
The code used in this paper is available at \url{https://github.com/klu5893/LDOUP-Calibration}.

\bibliographystyle{abbrv}
\bibliography{bibliography}
\addcontentsline{toc}{section}{\refname}

\end{document}